\documentclass[a4paper,reqno]{amsart}

\usepackage[T1]{fontenc}
\usepackage[utf8]{inputenc}
\usepackage[english]{babel}

\usepackage{amsmath,amssymb,amsthm} %AMS packages
\usepackage{mathtools}   %more tools for mathematical typesetting, based on amsmath
\usepackage{tensor}

\usepackage{bm}   %bold math expressions

\usepackage{tikz}
\usetikzlibrary{matrix,arrows,decorations.pathmorphing}
\usepackage{tikz-cd}
\usepackage{eucal}
\usepackage{mathrsfs}
\usepackage{dsfont}

\usepackage{enumitem}   %provides customization options for "enumerate"

\usepackage{stmaryrd} %for the commands \llbracket and \rrbracket

\theoremstyle{plain} %defaul style: bold name, math font text
\newtheorem{thm}{Theorem}[section]
\newtheorem{prop}[thm]{Proposition}
\newtheorem{lmm}[thm]{Lemma}
\newtheorem{cor}[thm]{Corollary}

\theoremstyle{definition} %bold name, normal font text
\newtheorem{dfn}[thm]{Definition}

\theoremstyle{remark} %italic name, normal font text
\newtheorem{rmk}[thm]{Remark}

\newtheorem{notat}{Notation}

\newcommand{\E}{\mathbb{E}}
\newcommand{\N}{\mathbb{N}}
\newcommand{\Z}{\mathbb{Z}}
\newcommand{\Q}{\mathbb{Q}}

\newcommand{\C}{\mathbb{C}}

\newcommand{\pr}{\mathbb{P}}   %projective space
\newcommand{\cO}{\mathcal{O}} %structure sheaf
\newcommand{\fo}{\mathbf{1}}
\newcommand{\z}{\zeta}
\newcommand{\cD}{{\mathcal D}}
\newcommand{\ve}{{\varepsilon}}
\newcommand{\cR}{{\mathcal R}}
\newcommand{\ka}{{\kappa}}

\title[Equivariant CCRC for ADE-orbifolds]{Equivariant Cohomological Crepant Resolution Conjecture for ADE-orbifolds}
\author{Qike Li}

\address[Qike Li]{Dipartimento di Matematica, Informatica e Geoscienze, Università degli Studi di Trieste, 
via Valerio 12/1, 34127 Trieste, Italy, QIKE.LI@phd.units.it}

\author{Fabio Perroni}

\address[Fabio Perroni]{Dipartimento di Matematica, Informatica e Geoscienze, Università degli Studi di Trieste, 
via Valerio 12/1, 34127 Trieste, Italy, fperroni@units.it}

\keywords{}
\begin{document}

%-------------------------------------------------
\begin{abstract}
Let $G\subset \mathrm{SL}_2(\mathbb C)$ be a non-trivial finite subgroup and let
\[
\pi \colon X\longrightarrow \mathbb C^2/G
\]
be the minimal resolution. We prove the $\mathbb C^*$-equivariant
Cohomological Crepant Resolution Conjecture for the ADE orbifold
$[\mathbb C^2/G]$. More precisely, after specializing the quantum
parameters at suitable  roots of unity, we
prove that the Bryan--Gholampour transformation induces an
isomorphism between the $\mathbb C^*$-equivariant quantum cohomology
of $X$ and the $\mathbb C^*$-equivariant Chen--Ruan cohomology of
$[\mathbb C^2/G]$.

Our proof is direct and is based on the McKay correspondence, ADE root
systems, and character theory. 
Following the character-theoretic form of the Bryan–Gholampour change of variables, 
we use the McKay correspondence to diagonalize the transformation and to reduce the compatibility 
of the products to a uniform root-system identity. 
In type \(A\), this reduction admits a discrete Fourier realization. 
In type \(D\), the cyclic subgroup of the binary dihedral group leads instead to finite sine transforms, 
while the exceptional cases \(E_6,E_7,E_8\) are treated by exact matrix computations over the corresponding cyclotomic fields.

As a preliminary result, for a symplectic complex vector space $V$
and a finite subgroup $G\subset \mathrm{Sp}(V)$, we give an explicit
presentation of the $\mathbb C^*$-equivariant Chen--Ruan product of
$[V/G]$ and identify the resulting algebra with the Rees algebra of
the center of the group algebra with respect to the age filtration.
In particular, the equivariant Chen--Ruan algebra interpolates between
the center of the group algebra and its associated graded algebra,
the latter recovering the ordinary Chen--Ruan cohomology.
\end{abstract}

\date{\today}
\maketitle

%-------------------------------------------------
\section{Introduction}\label{Section_introduction}

Let $G$ be a finite group acting on a smooth variety $Y$. The quotient
$Y/G$ is typically singular, whereas the quotient stack $[Y/G]$
retains the information on the stabilizers and is a smooth
Deligne--Mumford stack. A natural problem is to compare invariants of
$[Y/G]$ with those of a resolution
\[
\pi \colon X\longrightarrow Y/G
\]
of the coarse quotient. When $Y/G$ is Gorenstein, the resolutions
which are most closely related to the orbifold geometry are the
crepant ones, namely those for which the canonical class is preserved.

A fundamental invariant on the orbifold side is the Chen--Ruan
cohomology ring
\[
\left( H^*_{\mathrm{CR}}([Y/G]) \, , \cup_{{\rm CR}} \right) \, ,
\]
introduced in \cite{CR04}; see also \cite{FG03,ALR07}. As a vector
space, it is the cohomology of the inertia stack, with a degree shift
determined by the age. Besides the untwisted sector, it contains
twisted sectors associated with non-trivial stabilizers, while the
Chen--Ruan cup product incorporates the geometry of their
intersections.

Ideas originating in theoretical physics suggest that the quantum
cohomology of an orbifold should be closely related to the quantum
cohomology of a crepant resolution of its coarse moduli space. This
led to several versions of the Crepant Resolution Conjecture, starting
with Ruan \cite{RuanCCRC} and subsequently refined and generalized in
\cite{BG09,CIT09,CR13}. In its cohomological form, the conjecture
predicts, roughly speaking, that the Chen--Ruan cohomology ring of an
orbifold can be recovered from the small quantum cohomology of a
crepant resolution after analytic continuation in the quantum
parameters and specialization at suitable values.

In this paper we study this correspondence for the surface quotient
singularities
\[
\mathbb C^2/G,
\qquad
G\subset \mathrm{SL}_2(\mathbb C)
\]
finite and $\not= \{ e \}$. These are the Kleinian, or ADE, surface singularities. They
admit a unique minimal resolution
\[
\pi:X\longrightarrow\mathbb C^2/G \, ,
\]
which is crepant, and the configuration of the irreducible components of the exceptional divisor is
described by a finite Dynkin diagram of ADE type.
Through the McKay correspondence, the irreducible components of the
exceptional divisor are related to the non-trivial irreducible
representations of $G$; see, for example,
\cite{McKay,GSVMcKay, CS98,ReidMcKay, DolgachevMcKay}. Thus the geometry of $X$, the representation
theory of $G$, and the corresponding ADE root system are naturally
linked.

We work equivariantly with respect to the scalar action of
$\mathbb C^*$ on $\mathbb C^2$, which lifts to the minimal resolution in such a way that $\pi$ is equivariant.
The $\mathbb C^*$-equivariant quantum product on $X$ was computed by
Bryan and Gholampour in \cite{bryan2008root} in terms of the corresponding ADE
root system. 
The precise form of the specialization of the quantum parameters has
emerged through several developments of the Crepant Resolution
Conjecture. Ruan's original formulation suggested specializing the
quantum parameters associated with exceptional curves to $-1$.
Subsequent examples showed that, in general, this prescription has to
be replaced by specialization at suitable roots of unity; see
\cite{Perroni07,BGP08}. This phenomenon was later incorporated into more
general formulations of the conjecture and received a conceptual
explanation through the work of Coates--Iritani--Tseng, Coates--Ruan and Iritani
\cite{CIT09, CR13,Iritani10}.

Our main result is a direct verification of this prediction for every
finite subgroup $G\subset \mathrm{SL}_2(\mathbb C)$.
Let ${\rm Irr} (G)$ be the set  of characters of the irreducible representations of $G$.
The McKay correspondence yields classes \(\alpha_\chi \in H^*_{\C^*}(X) \), 
for $\chi \in {\rm Irr} (G)$ non-trivial, which, together with the unit, form a basis
 of $H^*_{\C^*}(X)$ \cite{GSVMcKay} (see also \cite{McKay, ReidMcKay, DolgachevMcKay}).
On the other hand,  $H^*_{\mathbb C^*,\mathrm{CR}} \bigl([\mathbb C^2/G]\bigr)$
has a basis $\{ \fo_{[g]} \, | \, g\in \overline{G}\}$ indexed by conjugacy classes $[g]$ of $G$, where $\overline{G}$
is a set of representatives of such classes.
Following \cite{bryan2008root, BG09}, we specialize the quantum parameters corresponding to the irreducible exceptional curves
as follows:
\begin{equation}\label{BGq}
q_\chi = \exp \left( \frac{2\pi i \chi (1)}{|G|} \right) \, , \qquad \chi \in {\rm Irr} (G) \quad \mbox{non-trivial} \, .
\end{equation}
Let us write
\begin{equation}\label{qccr}
QH^*_{\mathbb C^*}(X)_{\pi}
\end{equation}
for the corresponding specialized equivariant quantum cohomology ring, 
which is known as the (equivariant) \textit{quantum corrected cohomology ring} of $X$ \cite{RuanCCRC}.
The Bryan--Gholampour transformation is the linear map
\[
L \colon H^*_{\C^*}(X) \longrightarrow H^*_{\mathbb C^*,\mathrm{CR}} \bigl([\mathbb C^2/G]\bigr)
\]
defined by
\begin{equation}\label{BGLintro}
L (\alpha_\chi) = \sum_{g\in \overline{G}} \sqrt{\chi_V(g) -2} \chi (g) \fo_{[g]} \, , \qquad \forall \chi \in {\rm Irr} (G)
\quad \mbox{non-trivial} \, ,
\end{equation}
$L(1)=1$, and extended $\Q[t]$-linearly \cite{bryan2008root}, where $V$ is the  representation of $G$ on $\C^2$
given by the inclusion $G\subset {\rm SL}_2(\C)$ and $\chi_V$ is  its character. 
Here and throughout
the paper, the square root is chosen according to the
Bryan--Gholampour convention, namely as the positive multiple of $i$.

\begin{thm}\label{mainthm}
Let $G\subset \mathrm{SL}_2(\mathbb C)$ be a non-trivial finite subgroup and let
$\pi \colon X\to\mathbb C^2/G$ be the minimal resolution. The
Bryan--Gholampour transformation induces an isomorphism of
$\mathbb C[t]$-algebras
\[
L:
QH^*_{\mathbb C^*}(X)_{\pi}
\longrightarrow
\left( H^*_{\mathbb C^*,\mathrm{CR}}
\bigl([\mathbb C^2/G]\bigr) \, , \, \cup_{\C^*, {\rm CR}} \right) \, .
\]
\end{thm}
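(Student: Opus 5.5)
The plan is to show that $L$ is an isomorphism of $\mathbb C[t]$-modules and then check multiplicativity on a basis. On the quantum side we use the Bryan--Gholampour formula for the $\mathbb C^*$-equivariant small quantum product of the minimal resolution. On the orbifold side we use the explicit presentation of the $\mathbb C^*$-equivariant Chen--Ruan product of $[V/G]$ from the preliminary section, that is, its identification with the Rees algebra of $Z(\mathbb C[G])$ for the age filtration.

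\textbf{Bijectivity.} In the Rees-algebra description, $\fo_{[g]}$ corresponds to $t^{\mathrm{age}(g)}$ times the class sum $\sum_{h\in[g]}h$. For $g\neq e$ in $\mathrm{SL}_2$ the age is $1$, and the factor $\sqrt{\chi_V(g)-2}$ is nonzero. The character table is invertible by the orthogonality relations. Together these show that $L$ is bijective over $\mathbb C[t]$ once the degree bookkeeping is matched, with $\deg\alpha_\chi=2$ and the twisted sectors shifted by age $1$.

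\textbf{Diagonalizing the orbifold product.} The natural approach is through the central idempotents
\[
e_\psi=\frac{\psi(1)}{|G|}\sum_{g\in G}\psi(g^{-1})\,g ,\qquad \psi\in \mathrm{Irr}(G).
\]
In $Z(\mathbb C[G])$ these are orthogonal idempotents. I will rewrite the images $L(\alpha_\chi)$ in this basis after inverting $t$, where the Rees algebra becomes $Z(\mathbb C[G])[t,t^{-1}]$. In that basis the Chen--Ruan product is diagonal. Consequently $L(\alpha_{\chi_1})\cup L(\alpha_{\chi_2})$ is computed by pointwise multiplication of the functions
\[
g\mapsto \sqrt{\chi_V(g)-2}\,\chi_j(g)
\]
on conjugacy classes, up to the $t$-weights. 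The product then involves $(\chi_V(g)-2)\chi_1(g)\chi_2(g)$. Since $\chi_V-2$ is, up to sign, the character of the Cartan/McKay operator, I will expand this product using the McKay graph relation $\chi_V\chi=\sum_{\chi'\sim\chi}\chi'$ and the root system.

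\textbf{Matching the quantum product and the main obstacle.} Bryan--Gholampour write the quantum product as
\[
\alpha_i * \alpha_j = (-2t)\alpha_i\cup\alpha_j\text{-type classical term} + t\sum_{\beta>0}\langle\alpha_i,\beta\rangle\langle\alpha_j,\beta\rangle\frac{1+q^\beta}{1-q^\beta}\,\beta ,
\]
and the step is to specialize it at \eqref{BGq}. Under this specialization $q^\beta=\exp(2\pi i\,\langle\delta,\beta\rangle/|G|)$, where $\delta$ is determined by the dimensions $\chi(1)$ (the highest root coefficients). Each factor $\frac{1+q^\beta}{1-q^\beta}$ becomes $-i\cot(\pi\,\cdot/|G|)$. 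Applying $L$ and comparing with the diagonalized Chen--Ruan product reduces the theorem to a single identity, uniform over the root system: for each non-identity class $g$, the sum over positive roots of $\langle\alpha_i,\beta\rangle\langle\alpha_j,\beta\rangle\cot(\cdot)\,\beta(g)$ must equal a prescribed expression in $\sqrt{\chi_V(g)-2}\,\chi_i(g)\chi_j(g)$. Here $\beta(g)$ denotes the value at $g$ of the virtual character attached to $\beta$ by McKay.

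I expect this cotangent/root-sum identity to be the main obstacle. I would prove it case by case.
\begin{itemize}
\item In type $A_{n-1}$, with $G=\mathbb Z/n$, the positive roots are $e_a-e_b$ and the characters are $\zeta^{k}$. The identity becomes a finite Fourier sum $\sum_{k}\cot(\pi k/n)\zeta^{km}$, which equals $i(n-2m)$-type values and can be evaluated in closed form.
\item In type $D$, I would restrict to the cyclic subgroup of order $2n$ of the binary dihedral group. On it the $2$-dimensional characters are $\zeta^k+\zeta^{-k}$, so the sums become finite sine transforms. The remaining order-$4$ classes would be handled separately using the sign characters.
\item For $E_6,E_7,E_8$ there are finitely many roots and classes. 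I would verify the identity by exact computation over the cyclotomic fields $\mathbb Q(\zeta_{24}),\mathbb Q(\zeta_{48}),\mathbb Q(\zeta_{120})$.
\end{itemize}
Finally, the products involving the unit and the $t$-linearity are immediate, which completes the proof that $L$ is a $\mathbb C[t]$-algebra isomorphism.
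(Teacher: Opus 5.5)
\textbf{Gap: the orbifold side is not pointwise on conjugacy classes.} Your diagonalization step fails, and the error carries over to the identity you propose to verify. In $Z(\C[G])$ the element $L(\alpha_\chi)=\sum_{[g]}\Delta_g\chi(g)\fo_{[g]}$ is the class function $f_\chi(g)=\Delta_g\chi(g)$. The Chen--Ruan product of two such elements is the \emph{convolution} $k\mapsto\sum_{g'h'=k}f_{\chi_1}(g')f_{\chi_2}(h')\,t^{a(g')+a(h')-a(k)}$. It is governed by the class-algebra constants $N(g,h,k)$ of Corollary~\ref{UTCRsympl}, not by pointwise multiplication on conjugacy classes.

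The idempotents $e_\psi$ do diagonalize the product, but their coordinates are indexed by $\psi\in\mathrm{Irr}(G)$. The $e_\psi$-coordinate of $L(\alpha_\chi)$ is $\frac{t}{\psi(1)}\sum_{g\in G}\Delta_g\chi(g)\psi(g)$. This is a character sum with the square roots $\Delta_g$ inside it, and the McKay relation $\chi_V\chi=\sum\chi'$ does not simplify it. So the product does not ``involve $(\chi_V(g)-2)\chi_1(g)\chi_2(g)$'', and the class-by-class identity you reduce to is false.

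Already for $A_2$ the formula fails. There $\Delta_1=\Delta_2=i\sqrt3$ and $\zeta=e^{2\pi i/3}$, and one gets $L(\alpha_1)\cup L(\alpha_1)=-6t^2-3\zeta t\fo_{[1]}-3\zeta^2t\fo_{[2]}$, which does equal $L(\alpha_1*\alpha_1)$. By contrast, $(\chi_V-2)\chi_1^2$ takes the value $-3\zeta^2$ on $[1]$ and $-3\zeta$ on $[2]$. The pointwise picture also loses the $t^2$ identity-sector term entirely.

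\textbf{What is missing: diagonalize $L$ instead of the product.} Set $\beta_0=-\sum_i d_i\alpha_i$ and $v_g=\sum_{i=0}^r\chi_i(g^{-1})\beta_i$. Column orthogonality then gives $L(v_g)=z_g\Delta_g\fo_{[g]}$. It also gives $\langle v_g,v_h\rangle=z_g(2-\chi_V(g))$ if $[h]=[g^{-1}]$, and $0$ otherwise.

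This pairing formula settles the $t^2$ terms uniformly, which your plan never addresses. It uses $N(g,g^{-1},e)=|G|/z_g$ and $a(g)=1$ for $g\neq e$. What remains is the convolution-type identity
\[
\mathcal Q(v_g,v_h)=z_gz_h\Delta_g\Delta_h\sum_{[k]\neq[e]}\frac{N(g,h,k)}{z_k\Delta_k}\,v_k .
\]
Equivalently, $\langle\mathcal Q(v_g,v_h),v_k\rangle=-|G|\Delta_g\Delta_h\Delta_k\sum_{\chi}\chi(g)\chi(h)\chi(k)/\chi(e)$.

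This is the identity the case analysis must establish. Your $A$/$D$/$E$ outline is the right one once the target is corrected: Fourier--cotangent sums in type $A$; sine transforms on $\langle\sigma\rangle$ in type $D$, with the two non-cyclic classes handled through $\fo_{[n+1]}\pm\fo_{[n+2]}$; exact cyclotomic computation for $E_6,E_7,E_8$.

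\textbf{Smaller points.}
\begin{enumerate}
\item Bijectivity needs invertibility of the minor of the character table with $\chi_0$ and $[e]$ deleted, not of the full table. It holds, for instance via the $v_g$ above.
\item The classical term is $-|G|\langle x,y\rangle t^2$.
\item The sign is $\frac{1+q^\beta}{1-q^\beta}=+i\cot\bigl(\pi h(\beta)/|G|\bigr)$, not $-i\cot$.
\end{enumerate}
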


For singularities of type $A$ and $D$, the statement of the theorem also follows from the more general results of 
\cite{CCIT09} and \cite{Hu13}, respectively. Our proofs are independent of those approaches and are based on 
explicit algebraic calculations, the McKay correspondence, and finite Fourier analysis. 
To the best of our knowledge, the equivariant Cohomological Crepant Resolution Conjecture for the surface orbifolds of types 
\(E_6\), \(E_7\), and \(E_8\) had not previously been proved. The proof in type \(A\) given in \cite{CCIT09} 
relies on mirror symmetry. In this connection we also mention the recent work of Brini, Ma and Strachan 
\cite{BMS25}, who obtained a mirror-symmetric description of the equivariant quantum cohomology of all ADE resolutions.

The character-theoretic form of the Bryan--Gholampour transformation is
closely tied to the classical McKay correspondence. As observed in
\cite[Sec.~5]{bryan2008root}, the Cartan pairing can be expressed in terms of
the irreducible characters of $G$ and the function $\chi_V-2$. Equivalently,
this is the classical observation of McKay \cite{McKay} that the columns of
the character table diagonalize the affine Cartan matrix, with eigenvalues
$2-\chi_V(g)$. We use this point of view systematically.
For every conjugacy class \([g]\), we introduce a vector \(v_g\) in the finite root space, 
defined from the corresponding column of the character table. 
For non-trivial \([g]\), these vectors form the 
 \textit{McKay coordinate basis}. If
$z_g=|C_G(g)|$ is the order of the centralizer of $g$ in $G$, and $\Delta_g=\sqrt{\chi_V(g)-2}$, then
\[
L(v_g)=z_g\Delta_g\,\fo_{[g]}.
\]
Thus the Bryan--Gholampour change of variables becomes diagonal in McKay
coordinates. The same coordinates also diagonalize the Cartan pairing, and
this makes the compatibility of the classical term of the quantum product
with the identity-sector contribution of the Chen--Ruan product uniform for
all finite subgroups of $\mathrm{SL}_2(\mathbb C)$.

What remains is the comparison of the quantum correction with multiplication
of non-trivial conjugacy-class sectors. We formulate this comparison as a
single identity in McKay coordinates. The subsequent ADE analysis verifies
this identity in different concrete realizations. In type $A_{n-1}$, the
McKay coordinate vectors are precisely the discrete Fourier basis vectors of the cyclic
group. In these coordinates
\[
L(v_a)=n\Delta_a\,\fo_{[a]},
\qquad
\Delta_a=2i\sin\left(\frac{\pi a}{n}\right), \qquad 1\le a\le n-1,
\]
and, away from the zero Fourier mode, the specialized quantum product respects
addition of Fourier indices modulo $n$. The required identity is then reduced
to character orthogonality and a finite Fourier--cotangent identity.

In type \(D\), the character-theoretic reduction is combined with the cyclic subgroup of the binary dihedral group. 
The root-system calculation is most naturally performed in an orthonormal basis, 
while the corresponding Chen–Ruan classes are expressed through finite sine transforms, 
together with two additional components associated with the non-cyclic conjugacy classes.

For the exceptional groups, the character table gives the same uniform change
of coordinates, but no comparably simple cyclic realization is available. We
therefore verify the remaining product-compatibility identities exactly in the
relevant cyclotomic fields. This gives the cases $E_6$, $E_7$, and $E_8$
without relying on numerical approximations.

There is a second ingredient of the paper which is independent of the
ADE classification. Let $V$ be a symplectic complex vector space,
let $G\subset \mathrm{Sp}(V)$ be a finite subgroup, and let
$T=\mathbb C^*$ act on $V$ by scalar multiplication. We compute
explicitly the $T$-equivariant Chen--Ruan product of $[V/G]$. If
$\fo_{[g]}$ denotes the class associated with the conjugacy class of
$g\in G$, we obtain
\[
\fo_{[g]}\cup_{T,\mathrm{CR}}\fo_{[h]}
=
\sum_{[k]}
N(g,h,k)\,
t^{a(g)+a(h)-a(k)}\fo_{[k]},
\]
where $N(g,h,k)$ are the structure constants for multiplication of
conjugacy-class sums in the center of the group algebra and $a(x)$ is the age of $x \in G$.

This formula admits a natural algebraic interpretation. Let
$F_\bullet Z(\mathbb Q[G])$ be the filtration of the center of the
group algebra induced by the age function. We prove that
\[
\left( H^*_{T,\mathrm{CR}}([V/G]) \, , \cup_{T, {\rm CR}} \right) 
\cong
\operatorname{Rees}_{F}
Z(\mathbb Q[G]).
\]
Consequently, the equivariant Chen--Ruan algebra provides a
deformation interpolating between the center of the group algebra and
its associated graded algebra: specialization at $t=1$ recovers
$Z(\mathbb Q[G])$, while specialization at $t=0$ gives
$\operatorname{gr}_F Z(\mathbb Q[G])$, which identifies with the
ordinary Chen--Ruan cohomology. This description is closely related,
up to a reindexing of the filtration, to the construction of
Ginzburg--Kaledin \cite{GK}.

These two ingredients reflect complementary aspects of the McKay
correspondence. On the orbifold side, the product is governed by conjugacy
classes and multiplication in the center of the group algebra. On the
resolution side, the quantum product is governed by the associated ADE root
system. The same character table which enters the Bryan--Gholampour change of
variables relates these two descriptions; the McKay coordinates introduced
below make this relation explicit.

We conclude by describing the organization of the paper.
In Section~2 we recall the equivariant Chen--Ruan product for global quotients
in the form needed in the sequel. In Section~3 we compute this product for
symplectic vector spaces and identify the resulting algebra with the Rees
algebra associated with the age filtration of the center of the group algebra.
In Section~4 we introduce McKay coordinates for an arbitrary finite subgroup
of $\mathrm{SL}_2(\mathbb C)$, rewrite the Bryan--Gholampour transformation in
these coordinates, and reduce the comparison of products to a uniform
root-system identity. Section~5 treats cyclic groups and proves this identity
in type $A$ using discrete Fourier coordinates. 
Section 6 is devoted to binary dihedral groups and type \(D\), 
where the comparison is carried out using root coordinates and finite sine transforms.
 Section 7 treats the three exceptional cases \(E_6\), \(E_7\), and \(E_8\) 
within a common computational framework. 
The finite Fourier–cotangent identities used in the classical cases, together with their proofs, are collected in the appendices.

\bigskip

\subsection*{AI usage}
The authors used generative AI tools during the preparation of this manuscript to assist with language editing, 
mathematical computations and their verification, and the development and verification of some mathematical arguments. 
All results, computations, and proofs presented in the paper were subsequently independently reviewed and verified by the authors, 
who take full responsibility for the contents of the article.

\subsection*{Acknowledgments} 
We are grateful to Barbara Fantechi for encouraging us to work on this project and for constructive advice.
We would also like to thank Vanja Zuliani for helpful discussions,
and to Matteo Gallet for bringing to our attention the article \cite{LiuXin}.

The  authors were partially supported by
the research group GNSAGA of INDAM, the FRA  of the University of Trieste and the
National Project PRIN 2022 ``Geometry of Algebraic Structures: Moduli, Invariants, Deformations".

\bigskip

\subsection{Conventions}\label{conventions}
We work over the field of complex numbers $\C$.
In particular, by a variety we mean a complex algebraic variety and vector bundles will be complex. 
A vector bundle on a disjoint union 
of varieties is given by a vector bundle on each connected component, possibly having different ranks on each 
connected component; its top Chern class is the cohomology class restricting to the top Chern class on each 
connected component. 
All group actions are left group actions.

For a topological space $Y$, 
$H^*(Y)=\oplus_i H^i(Y, \Q)$ denotes the graded singular cohomology group of $Y$ with rational coefficients. 
If we don't take into account the graded structure, we write simply $H(Y)$. When $Y$ is a variety, $H(Y)$
is the cohomology of the complex analytic space associated to $Y$. 

If $Y$ is a differentiable manifold and $T$ is a Lie group acting on it by diffeomorphisms, 
we denote by $H_T(Y)$ the $T$-equivariant cohomology of $Y$,
either defined via the Borel construction as the cohomology of $Y\times_T \E$, 
or as the de Rham cohomology of the stack $[Y/T]$ (\cite{Beh04}, \cite[Ch. 2]{BGNX}). 
In all the applications we deal with in this article, $Y$ is a complex algebraic variety and $T$ is a complex linear algebraic group
acting algebraically on $Y$. Then $H^i_T(Y)$ coincides with the equivariant cohomology group of $Y$, as defined in 
\cite{AF}, for $i\geq 0$. The reader may freely refer to this latter reference regarding equivariant cohomology.
For the sake of clarity, we should mention that, in the Borel construction $\E$ is a contractible topological space
with a free $T$-action. While, in the algebraic framework of \cite{AF} (compare also \cite{EG}) $\E$ is a nonsingular 
connected algebraic variety (hence finite dimensional) such that $H^i(\E)=0$ for every $0<i<N$ (for $N$ a positive integer or infinity),
and with a free $T$-action such that $\E \to \E/T$ is a principal $T$-bundle locally trivial in the complex topology.
Then $H_T^i(Y) \cong H^i(Y\times_T \E)$, for every $i<N$.

\bigskip

\section{Equivariant Chen-Ruan cohomology}\label{ecrc}
Let $Y$ be a smooth variety, let $G$ be a finite group with the identity $e\in G$ and let $T$ be a complex linear algebraic group 
both acting on $Y$ by regular maps, 
such that these actions commute, i.e. they correspond to a $(G\times T)$-action on $Y$.
Under these hypotheses, we obtain  a strict action by $T$ on the orbifold $[Y/G]$, in the sense of \cite{Romagny}.
In a different context (e.g. when $Y$ is a symplectic manifold and $T, G$ are tori),
the $T$-equivariant Chen-Ruan cohomology of $[Y/G]$, that we denote  $H^*_{T, \rm{CR}}([Y/G])$, 
has been defined and studied in \cite{HoMa}. Their approach follows the original definition of \cite{CR04}
and also uses some results from \cite{AGV08}. A similar definition can be found in \cite{Johnson}. 
Since the examples we are interested in don't fit in these frameworks, we review here the
definition of  $H^*_{T, \rm{CR}}([Y/G])$, adapting the presentation of the usual Chen-Ruan cohomology given in \cite{FG03}.

Let 
$$
I_G(Y):= \{ (g, y) \in G\times Y \, | \, gy=y\} \, 
$$
be the inertia variety. Note that $I_G(Y)= \sqcup_{g\in G}Y^g$, where $Y^g= \{ y\in Y \, | \, g y = y\}$
and that it carries a natural action by $T$.

Define the \textit{equivariant inertia cohomology} of $(Y, G)$ by
$$
H_T(Y, G) := H_T(I_G(Y))=  \oplus_{g\in G} H_T(Y^g) \, .
$$
For $\alpha \in H_T(Y^g)$, we denote by $\alpha_g$ the corresponding element 
in the $g$-th summand of $H_T(Y, G)$.

The grading on $H_T(Y, G)$ is defined using the notion of \textit{age}, which we recall now.
In the discussion that follows we consider $Y$ as a complex manifold.
Let $g\in G$ and let $y\in Y^g$. Let $\lambda_1, \ldots , \lambda_{\dim (Y)}$ be the eigenvalues of the 
differential $T_yg$ of the map $g\colon Y \to Y$ at $y$. Since $g$ has finite order,  $\lambda_j = e^{2\pi i r_j}$, 
for unique  $r_j \in \Q \cap [0, 1[$, $j=1, \ldots , \dim (Y)$. 
The age of $g$ in $y$ is defined as $a(g, y):= \sum_{j=1}^{\dim (Y)} r_j$.
By a well known result of Cartan the action of $g$ in a neighborhood of $y$  can be linearized, hence
$a(g, y)$ is constant on the connected component $Z$ that contains $y$. We denote by $a(g, Z)$ the age of $g$ in any point 
$y\in Z$.
Let $\iota \colon Z \hookrightarrow Y$ be the inclusion. Then, for   $\alpha \in H^k_T(Z)$, we assign 
to $(\iota_* \alpha)_g \in H_T(Y, G)$ the degree $k + 2a(g, Z)$. In this way we obtain a rationally graded 
vector space $H_T^*(Y, G)$.

\begin{rmk}
It follows from the definition of age that
$$
a(g, Z) + a(g^{-1}, Z) = \dim (Y) - \dim (Z) = {\rm codim} (Z\subseteq Y) \, .
$$
\end{rmk}
\begin{rmk}
$a(g, Z) \in \Z$ if and only if $\det (T_y g)=1$, for some $y\in Z$.
In particular, $H_T^*(Y, G)$ is integrally graded if the canonical line bundle of $Y$ is $G$-invariant.
\end{rmk}

\begin{notat}
When $Y^g$ is connected (for example if $Y$ is a vector space), we denote $a(g, Y^g)$ by $a(g)$.
\end{notat}

The group $G$ acts on $H_T^*(Y, G)$ as follows: for every $g, h \in G$, $\alpha \in H_T(Y^g)$, define 
$$
h \alpha_g := (h_* \alpha)_{hgh^{-1}} \, .
$$ 
Note that this action preserves the grading.

\begin{dfn}\label{TCR}
Under the previous notation and hypotheses, the $T$-equivariant Chen-Ruan cohomology of $[Y/G]$ is defined as follows:
$$
H^*_{T, {\rm CR}}([Y/G]) := H^*_T(Y, G)^G \, ,
$$
i.e. as the $G$-invariant subspace of $H^*_T(Y, G)$.
\end{dfn}

\begin{rmk}\label{rmkTCR}
Let $\overline{G} \subseteq G$ be a set of representatives of the conjugacy classes of $G$.
Then $H^\ast_{T, {\rm CR}}([Y/G])$ is isomorphic to
$$
\bigoplus_{g\in \overline{G}} H_T^\ast\left(Y^g/C_G(g)\right) \, .
$$ 
where $C_G(g)$ denotes the centralizer of $g$ in $G$.
\end{rmk}

We  now define a bilinear map 
$$
\mu \colon H^*_T(Y, G) \times H^*_T(Y, G) \to H^*_T(Y, G) \, .
$$

Let $g, h \in G$ and let $\langle g, h\rangle \subseteq G$ be the subgroup that they generate.
Let $Y^{g,h} \subseteq Y$ be the complex sub-manifold of points fixed by $\langle g, h\rangle$, that is $Y^{g,h}=Y^g\cap Y^h$.
Note that the inclusions $Y^{g,h} \hookrightarrow Y^g$, $Y^{g,h} \hookrightarrow Y^h$ are $T$-equivariant.
For  $\alpha \in H_T(Y^g)$ and $\beta \in H_T(Y^h)$, we denote by 
$$
\alpha_{|Y^{g,h}} \, , \beta_{|Y^{g,h}} \in H_T(Y^{g,h})
$$
their pull-backs with respect to the above inclusions.

Let $C\to \pr^1$ be the Galois cover, with Galois group $\langle g, h\rangle$, defined in \cite[Appendix]{FG03}
($C$ is denoted $C(\pr^1, g, h, (gh)^{-1}, \langle g, h\rangle)$ in \textit{op. cit.}).
Let us recall that $C\to \pr^1$ is branched over $0, 1, \infty$, and its local monodromy over $0$
(resp. $1, \infty$) is conjugated to $g$ (resp. $h, (gh)^{-1}$). The action of $\langle g, h\rangle$ on $C$ induces a natural action 
on $H^1(C, \cO_C)$. In the  representation ring of $\langle g, h\rangle$ (with complex coefficients) 
we have the following equality \cite[Lemma 8.5]{JKK07}:
\begin{eqnarray}\label{l85JKK07}
H^1(C, \cO_C) = \mathbb C \ominus \C[\langle g, h\rangle] &\oplus& 
\bigoplus_{k=0}^{|g|-1}\frac{k}{|g|} {\rm Ind}^{\langle g, h\rangle}_{\langle g \rangle} \mathbb{V}_{\langle g \rangle, k} \\
&& \bigoplus_{k'=0}^{|h|-1}\frac{k'}{|h|} {\rm Ind}^{\langle g, h\rangle}_{\langle h \rangle} \mathbb{V}_{\langle h \rangle, k'} \nonumber \\
&&  \bigoplus_{k''=0}^{|gh|-1}\frac{k''}{|gh|} {\rm Ind}^{\langle g, h\rangle}_{\langle (gh)^{-1} \rangle} \mathbb{V}_{\langle (gh)^{-1} \rangle, k''} \nonumber \, ,
\end{eqnarray}
where, for every element $x \in G$, $\langle x \rangle$ denotes the subgroup generated by $x$, 
$|x|$ is its order,  $\mathbb{V}_{\langle x \rangle, k}$ is the irreducible representation of $\langle x \rangle$
such that $x$ acts with character $\exp \left( -\frac{2\pi i k}{|x|}\right)$ (for $k=0, \ldots , |x|-1$), and 
${\rm Ind}^{\langle g, h\rangle}_{\langle x \rangle} \mathbb{V}_{\langle x \rangle, k} $ is the induced 
$\langle g, h\rangle$-representation 
$\C[\langle g, h\rangle]\otimes_{\C[\langle x \rangle]} \mathbb{V}_{\langle x \rangle, k}$.

Let us consider the vector bundle $\left( TY_{|Y^{g,h}}\right) \otimes H^1(C, \cO_C)$ on $Y^{g,h}$ with the natural 
$\langle g, h\rangle$-action and define 
$$
F(g,h) := \left[ \left( TY_{|Y^{g,h}}\right) \otimes H^1(C, \cO_C)\right]^{\langle g, h\rangle} \, ,
$$
its $\langle g, h\rangle$-invariant sub-bundle, which is well defined since $\langle g, h\rangle$ is finite. 

Note that $F(g, h)$ is a $T$-equivariant vector bundle on $Y^{g,h}$ (see e.g.   
\cite[Ch. 2.3]{AF} or \cite[Prop. 3.2]{BGNX} for a definition), in fact
$T$ does not act on $H^1(C, \cO_C)$ and $TY_{|Y^{g,h}}$ is a $T$-equivariant vector bundle by standard reasons.
Hence, $F(g, h)$ induces a vector bundle on $Y^{g, h} \times_T \E$, which we denote $F(g, h)_T$. 
By \cite[Lemma 1.12]{FG03} (or \cite[Ch. 8]{JKK07}),
for every connected component $U$ of $Y^{g,h}$, $F(g,h)_{|U}$ and ${F(g,h)_T}_{|U}$ have  rank 
\begin{equation}\label{rkFgh}
{\rm rk}(F(g, h)_{|U}) =a(g, U) + a(h,U) - a(gh, U) - {\rm codim}(U\subseteq Y^{gh}) \, .
\end{equation}
In the following we denote by 
$$
c^T(g, h) \in H_T(Y^{g,h}) 
$$ 
the top equivariant Chern class of $F(g,h)$, which is the top Chern class of $F(g,h)_T$.

\begin{dfn}
The bilinear map 
$$
\mu \colon H^*_T(Y, G) \times H^*_T(Y, G) \to H^*_T(Y, G) \, 
$$
is defined by
$$
\mu (\alpha_g, \beta_h) := \iota_* \left( \alpha_{|Y^{g,h}} \cdot \beta_{|Y^{g,h}} \cdot c^T(g,h) \right) \, ,
$$
for every $g, h \in G$, $\alpha \in H_T(Y^g)$, $\beta \in H_T(Y^h)$; where
$\iota \colon Y^{g, h} \hookrightarrow Y^{gh}$ is the natural  inclusion, $\iota_* \colon H^i_T(Y^{g,h}) \to H^{i+2d}_T(Y^{gh})$ 
is the $T$-equivariant Gysin map ( \cite[Ch. 3.6]{AF}, \cite{BGNX}) and $d= {\rm codim} (Y^{g,h} \subseteq Y^{gh})$.
\end{dfn}

\begin{rmk}\label{mu1}
Note that, if $g=1$, then $C\cong \pr^1$ by the  Riemann-Hurwitz formula, therefore 
$F(1,h)$ has rank $0$ and 
$$
\mu (\alpha_1, \beta_h) =  \left( \alpha_{|Y^{h}} \cdot \beta  \right)_h \, . 
$$
A similar formula holds true if $h=1$.
\end{rmk}

\begin{thm}
The bilinear map $\mu$ defines a graded, $G$-equivariant, associative multiplication on $H^*_T(Y, G)$. 
\end{thm}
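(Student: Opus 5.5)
The plan is to follow the proof of the non-equivariant statement in \cite{FG03} (Fantechi--G\"ottsche) and check that each step survives in $T$-equivariant cohomology. All the ingredients there are functorial and compatible with $T$-equivariant structures: restrictions, Gysin maps, the bundles $F(g,h)$ and their top Chern classes. Since $T$ commutes with $G$ and acts trivially on $H^1(C,\cO_C)$, every identity of (equivariant) vector bundles used in \cite{FG03} lifts to an identity of $T$-equivariant bundles. Hence the identities among top Chern classes and excess-intersection formulas hold in $H_T$, which we may regard as ordinary cohomology of the finite-dimensional approximations $Y^{g,h}\times_T \E$ in a range of degrees.

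\emph{Grading.} For $\alpha\in H^k_T(Y^g)$ and $\beta\in H^l_T(Y^h)$ supported on a component $U$ of $Y^{g,h}$, the product $\mu(\alpha_g,\beta_h)$ has cohomological degree $k+l+2\,\mathrm{rk}F(g,h)_{|U}+2\,\mathrm{codim}(U\subseteq Y^{gh})$ on $Y^{gh}$. Its total degree is this number plus $2a(gh,\cdot)$. By \eqref{rkFgh} this equals $k+2a(g,U)+l+2a(h,U)$, which is the sum of the degrees of the two factors. This step is immediate.

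\emph{$G$-equivariance.} For $k\in G$, the map $k\colon Y\to Y$ is $T$-equivariant and sends $Y^{g,h}$ to $Y^{kgk^{-1},khk^{-1}}$. The cover $C(\pr^1,g,h,(gh)^{-1})$ is identified with the one for the conjugated triple via conjugation by $k$, compatibly with the group actions. So $k^*F(kgk^{-1},khk^{-1})\cong F(g,h)$ as $T$-equivariant bundles. Compatibility of $k_*$ with restriction and Gysin maps then gives $k\,\mu(\alpha_g,\beta_h)=\mu(k\alpha_g,k\beta_h)$.

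\emph{Associativity.} This is the main obstacle. Fix $g_1,g_2,g_3$ with $g_4=(g_1g_2g_3)^{-1}$, and set $Y^{g_1,g_2,g_3}$ to be the common fixed locus. Expanding both $\mu(\mu(\alpha_{g_1},\beta_{g_2}),\gamma_{g_3})$ and $\mu(\alpha_{g_1},\mu(\beta_{g_2},\gamma_{g_3}))$ uses the projection formula and the $T$-equivariant excess intersection formula (e.g.\ \cite{AF}) for the fibre squares formed by $Y^{g_1,g_2,g_3}\subseteq Y^{g_1g_2,g_3}$ and $Y^{g_1,g_2}\subseteq Y^{g_1g_2}$. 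Both sides then reduce to the pushforward to $Y^{g_1g_2g_3}$ of $\alpha\beta\gamma$ times a class $c^T(g_1,g_2)_{|}\,c^T(g_1g_2,g_3)_{|}\,e^T(E)$, resp.\ the analogous product, where $E$ is the excess bundle. Equality of these classes is \cite[Prop.~1.25 / Lemma 1.27]{FG03}, which proves it by degenerating a four-pointed $\langle g_1,g_2,g_3\rangle$-cover of $\pr^1$. That argument yields an isomorphism, or an equality in $K$-theory, of $\langle g_1,g_2,g_3\rangle$-bundles built from $TY$ and $H^1$ of curves; cf.\ also the $K$-theoretic formulation in \cite{JKK07}. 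Since these are bundles tensored from $TY_{|}$, their $T$-equivariant structure is carried along automatically. Thus the equality holds in $K_T(Y^{g_1,g_2,g_3})$, and taking top $T$-equivariant Chern classes gives associativity. The delicate point is to verify that the $K$-theoretic identity of \cite{JKK07}, rather than only an identity of Chern classes, is available, so that the equivariant lift is formal. Finally, Remark~\ref{mu1} shows that $1_e$ is a unit.
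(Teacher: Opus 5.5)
Your proposal is correct and follows essentially the same route as the paper. The grading comes from the rank formula \eqref{rkFgh}, $G$-equivariance from $v^*F(vgv^{-1},vhv^{-1})\cong F(g,h)$ as $T$-equivariant bundles, and associativity from transporting the argument of \cite[Thm.~1.18]{FG03} via the $T$-equivariant excess intersection formula (checked on the approximations $Y\times_T\E$), using that $T$ acts trivially on the curve cohomology. The ``delicate point'' you flag is settled as you anticipate: the proof in \cite{FG03} yields an actual isomorphism of bundles $F_L\cong F_R$, which is automatically $T$-equivariant for this reason, so no separate $K$-theoretic input from \cite{JKK07} is needed.
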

\begin{proof}
The theorem follows from the same arguments used in the proof of \cite[Theorem 1.18]{FG03}, suitably adapted
to the $T$-equivariant case.
In particular, the fact that $\mu$ is a graded multiplication is a consequence of the formula \eqref{rkFgh} for the rank of $F(g,h)$.
The $G$-equivariance of $\mu$ follows from the fact that, for $v\in G$, the action map induced by 
$v\colon Y^{g, h} \to Y^{g', h'}$
(where $g'= vgv^{-1}$, $h'= vhv^{-1}$) is $T$-equivariant and $v^* F(g', h') \cong F(g, h)$ by construction. 

To prove the associativity of $\mu$, observe that the excess intersection formula used in \cite{FG03}
holds true for $T$-equivariant cohomology, under our assumptions. In fact, if $S$ is a smooth variety with a $T$-action, 
$S_1$ and $S_2$ are closed smooth $T$-invariant subvarieties, with smooth intersection $U:=S_1 \cap S_2$,   
the excess bundle $E(S, S_1, S_2)$ of $U$ is $T$-equivariant and the bundle induced on $U\times_T \E$
is isomorphic to the excess bundle $E(S\times_T \E, S_1\times_T \E, S_2 \times_T \E)$ of $U\times_T \E$,
where $\E$ is a smooth variety on which $T$ acts freely, satisfying the properties recalled in Section \ref{conventions}
and of sufficiently high dimension. Therefore, for $j_i \colon S_i \hookrightarrow S$
and $\iota_i \colon U \hookrightarrow S_i$ be the natural inclusions, we have that
$$
j_2^*{j_1}_* (\alpha) = {\iota_2}_* \left( e^T(S, S_1, S_2) \cdot \iota_1^*(\alpha) \right) \, 
$$
in the $T$-equivariant cohomology of $S_2$, for every  $\alpha \in H^*_T(S_1)$, where $e^T(S, S_1, S_2)$ is the top equivariant Chern class of $E(S, S_1, S_2)$. 

As a consequence, we have that  Lemma 1.17 of \cite{FG03} holds true in our framework.
Moreover, the vector bundles $F_L$ and $F_R$ in the proof of \cite[Thm. 1.18]{FG03} are isomorphic 
as $T$-equivariant bundles, because $T$ acts trivially on the cohomology of the curve $C$ in \cite[Construction 1.21]{FG03}
(and hence on the cohomology of $\bar{C}$ in \cite[Rem. 1.24]{FG03}). Hence the associativity of $\mu$ follows.
\end{proof}

\begin{dfn}
The $T$-equivariant Chen-Ruan cup product $\cup_{T, {\rm CR}}$ is defined as the restriction of $\mu$
on $H^*_{T, {\rm CR}}([Y/G]) := H^*_T(Y, G)^G $. 
\end{dfn}

\begin{rmk}
Clearly, when $T$ is the trivial group,  the previous constructions and definitions coincide with those 
of \cite{FG03} and hence $H^*_{T, {\rm CR}}([Y/G])$ coincides with the Chen-Ruan cohomology ring of $[Y/G]$
defined in \cite{CR04}.
\end{rmk}

\section{Equivariant Chen-Ruan cohomology of symplectic vector spaces}\label{Section_ECRSympl}
In this section we give an explicit presentation of the Chen-Ruan cohomology 
algebra $\left( H^*_{T, {\rm CR}}([V/G]), \cup_{T, {\rm CR}} \right)$,
for a symplectic complex vector space $V$, $G \subset \mathrm{Sp}(V)$ a finite subgroup, 
and $T=\C^*$ acting on $V$ by scalar multiplication.
We will denote by $2n$ the dimension of $V$. 

We first recall the following equality for the age of any $g\in G$:
$$
a(g)= \frac{1}{2}{\rm codim}(V^g) \, .
$$
This is well known (see e.g. \cite{GK}) and it can be proved by an explicit computation using Darboux coordinates.

As graded vector space, $H_T^*(V, G) = \oplus_{g\in G} H_T^{*-2a(g)}(V^g)$. 
Moreover, for every $g\in G$, $V^g\times_T \E$ is a vector bundle over $\E/T$
of rank  $\dim (V^g)$. Therefore $H_T^*(V^g)$ is naturally identified with
the equivariant cohomology of a point,  $H^*_T({\rm pt})$, which is isomorphic to the polynomial ring $\Q[t]$,
with the usual polynomial grading, but where $\deg (t)=2$. In other words, as graded vector space, 
\begin{equation}\label{HTVG}
H_T^*(V, G) \cong \Q[t] \otimes_\Q \Q[G] \, ,
\end{equation}
where $\Q[G]$ is the vector space underlying the group algebra of $G$.
In particular,  the standard basis of $\Q[G]$ is  $\{ \fo_g \, | \, g \in G \}$,
where $\fo_g$ is the unity $\fo \in \Q[t] =H^*_T(V^g)$ in the $g$-th summand of  $H_T^*(V, G)$.
The graded structure is induced by setting $\deg (t)=2$ and $\deg (\fo_g) = 2a(g)$.

Let us recall here, for later use, that $t =c_1^T(\C_1)$, that is the equivariant first Chern class of 
the weight-one representation $\C_1$ of $T=\C^*$.

Under the above identifications, the cohomology of the untwisted sector, $H^*_T(V^1)=H^*_T(V) \cong \Q[t]$ acts centrally on 
$H^*_T(V,G)$ (Remark \ref{mu1}). Hence $\mu$ is determined by its $\Q[t]$-bilinearity, associativity, and by the products 
$\mu (\mathbf{1}_g, \mathbf{1}_h)$, for all $g, h \in G$.

\begin{prop}\label{musympl}
For $g, h \in G$, the following equality holds true:
$$
\mu (\fo_g, \fo_h) = t^{a(g)+a(h)-a(gh)} \fo_{gh} \, .
$$
\end{prop}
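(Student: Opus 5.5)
Unwinding the definition, $\mu(\fo_g,\fo_h)=\iota_*\bigl(c^T(g,h)\bigr)$, where $\iota\colon V^{g,h}\hookrightarrow V^{gh}$ is the inclusion. The restrictions of $\fo_g$ and $\fo_h$ to $V^{g,h}$ are both $1$. We compute the two factors separately, working throughout in $H_T(\mathrm{pt})=\Q[t]$, since every fixed locus is a linear subspace and hence $T$-equivariantly contractible.

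\emph{Step 1: the Gysin map.} $V^{g,h}\subseteq V^{gh}$ is a $T$-stable linear subspace of codimension $d$, on which $T$ acts by weight one. So $\iota_*(1)$ is the equivariant Euler class of the normal bundle $V^{gh}/V^{g,h}$, which is $c^T_{\rm top}(\C_1^{\oplus d})=t^d$. More generally, $\iota_*(t^m)=t^{m+d}$.

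\emph{Step 2: the obstruction bundle.} $F(g,h)=\bigl[V\otimes H^1(C,\cO_C)\bigr]^{\langle g,h\rangle}$ is a trivial bundle over the contractible space $V^{g,h}$. The torus $T$ acts on it through the $V$ factor only, by weight one. Hence $F(g,h)\cong \C_1^{\oplus r}$ as $T$-representations, with
\[
r=a(g)+a(h)-a(gh)-d
\]
given by formula \eqref{rkFgh}, using connectedness of $V^{g,h}$. Therefore $c^T(g,h)=t^{r}$. The only point needing care is that the $T$-action commutes with $\langle g,h\rangle$, so the invariant subspace is $T$-stable and $T$ still acts on it by scalars. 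This holds because $T$ acts by scalars on $V$.

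\emph{Step 3: combine.} Putting the two steps together gives $\mu(\fo_g,\fo_h)=\iota_*(t^r)=t^{r+d}\fo_{gh}=t^{a(g)+a(h)-a(gh)}\fo_{gh}$.

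As a consistency check, the exponent agrees with the degree count $\deg\fo_g+\deg\fo_h=\deg\fo_{gh}+2(\text{exponent})$. It is also nonnegative: $r\ge 0$, and since $a(x)=\tfrac12{\rm codim}V^x$, it reduces to the subadditivity ${\rm codim}V^{gh}\le{\rm codim}V^g+{\rm codim}V^h$.

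The main obstacle is Step 2: justifying that the rank formula \eqref{rkFgh} from \cite{FG03} applies here and that no weights other than $1$ occur. The weight statement is immediate because $T$ acts trivially on $H^1(C,\cO_C)$. If one prefers not to rely on \eqref{rkFgh}, the rank can instead be computed from \eqref{l85JKK07} via Frobenius reciprocity, but we will simply invoke \eqref{rkFgh}.
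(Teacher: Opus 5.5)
Your proposal is correct and follows the paper's proof essentially step for step. The paper likewise identifies $F(g,h)$, as a $T$-equivariant bundle, with the pullback from a point of $\C_1^{\oplus \mathrm{rk}(F(g,h))}$, giving $c^T(g,h)=t^{\mathrm{rk}(F(g,h))}$; it computes $\iota_*$ by the self-intersection formula, using that the normal bundle of $V^{g,h}$ in $V^{gh}$ is a sum of weight-one representations, and concludes with \eqref{rkFgh}.
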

\begin{proof} 
By definition, $\mu (\fo_g, \fo_h) = \iota_* \left( c^T(g,h) \right)$, where 
$\iota \colon V^{g,h}\times_T \E \hookrightarrow V^{gh} \times_T \E$ is the inclusion and 
$c^T(g,h)$ is the top equivariant  Chern class of the vector bundle $F(g, h)$ over $V^{g,h}$.
Note that, as a $T$-equivariant vector bundle, $F(g, h)$ is the pull-back from a point of
the weight-one representation  $\C_1^{\oplus {\rm rk} (F(g,h))}$. From this we conclude that 
$$
c^T(g,h) = t^{{\rm rk} (F(g,h))} \in H_T(V^{g,h}) \, .
$$
To compute $\iota_* \left( c^T(g,h) \right)$ we observe that the normal bundle $N_{V^{g,h}/V^{gh}}$ is
the pull-back, from a point, of a direct sum of copies of the weight-one representation. 
Therefore, its top equivariant Chern class
is $t^{{\rm codim} (V^{g,h} \subseteq V^{gh})} \in H_T(V^{g,h})$. The claim follows from equation \eqref{rkFgh}
and the self-intersection formula (see e.g. \cite[Ch. 3]{AF}).
\end{proof}

For the $T$-equivariant Chen-Ruan cohomology ring of $[V/G]$, by Definition \ref{TCR} we have that
$$
H^*_{T, {\rm CR}}([V/G]) \cong \left( \Q[t] \otimes_\Q \Q[G] \right)^G \, .
$$ 
Since $G$ acts trivially on $\Q[t]$, we conclude that
\begin{equation}\label{TCRV/G}
H^*_{T, {\rm CR}}([V/G]) \cong \Q[t] \otimes Z\left( \Q[G]\right)  \, ,
\end{equation}
where $Z\left( \Q[G]\right)  \subseteq \Q[G]$ is the vector space underlying the center of the group algebra of $G$.

As in Remark \ref{rmkTCR}, let us choose a set $\overline{G} \subseteq G$ of representatives of the conjugacy classes of $G$.
For any $g\in \overline{G}$, let us denote the conjugacy class of $g$ with $[g]$ and set
$$
\fo_{[g]} := \sum_{g' \in [g]} \fo_{g'} \, .
$$
Then, the  set $\left\{ \fo_{[g]}   \, | \, g\in \overline{G} \right\}$ is a basis for $Z\left( \Q[G]\right)$.

Let $N(g, h, k)$, for $g, h, k \in \overline{G}$, be the structure constants of $Z\left( \Q[G]\right)$, that is they are rational numbers
defined by the relations
$$
\fo_{[g]} \cdot \fo_{[h]} = \sum_{k\in \overline{G}} N(g, h, k)\fo_{[k]} \, , \quad \forall \, g, h \in \overline{G} \, ,
$$
where the product is the one of $Z\left( \Q[G]\right)$.
\begin{rmk}\label{rmk_Burnside}
Note that the $N(g, h, k)$'s are actually natural numbers, in fact  
$$
N(g, h, k) = | \{ (g', h') \in [g] \times [h] \, | \, g'h'=k \} | = 
|[g] \cap k [h^{-1}]| \, .
$$
Moreover, they can be expressed (via the so-called Burnside's formula) 
in terms of the characters of $G$ as follows (see e.g. \cite[p. 45]{Isaacs}):
for any $g, h, k \in \overline{G}$,
\begin{equation}\label{Burnside}
N(g, h, k) = \frac{|[g]|\cdot |[h]|}{|G|} \sum_{\chi \in {\rm Irr}(G)}\frac{\chi (g) \chi (h) \overline{\chi (k)}}{\chi (e)} \, , 
\end{equation}
where ${\rm Irr}(G)$ is the set of the characters of
the irreducible complex representations of $G$ and $e\in G$ is the identity of the group. This formula will be used later in order to explicitly compute  the coefficients $N(g, h, k)$ for finite subgroups $G \subset \mathrm{SL}_2(\C)$.
\end{rmk}

As a direct consequence of Proposition \ref{musympl}, we obtain the following presentation for the $T$-equivariant Chen-Ruan 
product.

\begin{cor}\label{UTCRsympl}
For $g, h \in \overline{G}$,
$$
\fo_{[g]} \cup_{T, {\rm CR}} \fo_{[h]} = 
\sum_{k\in \overline{G}} N(g, h, k) t^{a(g)+a(h)-a(k)} \fo_{[k]} \, .
$$
\end{cor}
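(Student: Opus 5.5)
The plan is to expand both classes into the standard basis of $H_T^*(V,G)\cong \Q[t]\otimes_\Q \Q[G]$ and apply Proposition \ref{musympl} term by term. By definition, $\fo_{[g]}=\sum_{g'\in[g]}\fo_{g'}$ and $\fo_{[h]}=\sum_{h'\in[h]}\fo_{h'}$. Since $\cup_{T,{\rm CR}}$ is the restriction of the $\Q[t]$-bilinear map $\mu$, we get
\[
\fo_{[g]} \cup_{T,{\rm CR}} \fo_{[h]} = \sum_{g'\in[g]}\sum_{h'\in[h]} t^{a(g')+a(h')-a(g'h')}\,\fo_{g'h'} \, .
\]

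The first step is to note that age is a class function. If $g'=vgv^{-1}$, then $T_0 g'$ and $T_0 g$ are conjugate linear maps on $V$, so they have the same eigenvalues and $a(g')=a(g)$. Equivalently, the formula $a(g)=\tfrac12\operatorname{codim}(V^g)$ can be used, together with $V^{vgv^{-1}}=vV^g$. It follows that $a(g')=a(g)$ and $a(h')=a(h)$ for every term. Moreover, $a(g'h')=a(k)$, where $k\in\overline{G}$ is the representative of the conjugacy class of $g'h'$.

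Next, group the terms according to the conjugacy class of the product $g'h'$. For fixed $k\in\overline{G}$ and a fixed $k'\in[k]$, the coefficient of $\fo_{k'}$ is $t^{a(g)+a(h)-a(k)}$ times the number of pairs $(g',h')\in[g]\times[h]$ with $g'h'=k'$. Conjugating such pairs by $v$, with $k'=vkv^{-1}$, gives a bijection with the pairs whose product is $k$. So this count is independent of $k'\in[k]$ and equals $N(g,h,k)$, by Remark \ref{rmk_Burnside}. The same statement is just the equality $\fo_{[g]}\cdot\fo_{[h]}=\sum_k N(g,h,k)\fo_{[k]}$ in $Z(\Q[G])$, read coefficientwise in the basis $\{\fo_{k'}\}$. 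Summing over $k'\in[k]$ then produces $N(g,h,k)\,t^{a(g)+a(h)-a(k)}\fo_{[k]}$, which is the claimed formula.

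No step is a serious obstacle. The only point requiring care is the invariance of age under conjugation, since it is what allows the exponent of $t$ to be pulled out of each conjugacy-class block. That invariance is immediate from the eigenvalue definition of age.
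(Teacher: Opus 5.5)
Your proof is correct and follows the paper's route: the paper states the corollary as a direct consequence of Proposition \ref{musympl}, which amounts to expanding the class sums bilinearly and regrouping the terms through the structure constants $N(g,h,k)$ of $Z(\Q[G])$. You have additionally spelled out the conjugation-invariance of the age, via $a(g)=\tfrac12\operatorname{codim}(V^g)$ and $V^{vgv^{-1}}=vV^g$, which the paper leaves implicit.
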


\subsection{Relation with Rees algebras}
Note that the  expression for $\cup_{T, {\rm CR}}$ in Corollary \ref{UTCRsympl}
shows that $\left( H^*_{T, {\rm CR}}([V/G]), \cup_{T, {\rm CR}} \right)$
is a graded deformation of the center of the group algebra of $G$. 
Proposition \ref{TCR=Rees} shows that this deformation is naturally described by the Rees algebra 
associated to a particular filtration of $Z\left( \Q[G]\right)$.

\begin{dfn}
Let $A$ be an associative algebra over a commutative ring $R$.  
Let $F:= \{ F_pA \, | \, p\in \N\}$ be a positive increasing filtration of $A$, 
which is compatible with the product of $A$.
That is, for every $p\in \N$, $F_pA \subseteq A$ is an $R$-submodule, $F_pA \subseteq F_{p+1}A$ for every $p$,
and 
$$
F_pA \cdot F_qA \subseteq F_{p+q}A \, , \quad \forall \, p, q \in \N \, .
$$
We define the Rees algebra of $(A, F)$ as the following sub-algebra of $A[t]$:
$$
{\rm Rees}_F(A) := \oplus_{p} (F_pA)t^p \subseteq A[t] \, .
$$
\end{dfn}

In our case, for $A=Z\left( \Q[G]\right)$, let
$$
F_pA:= {\rm Span}\{ \fo_{[g]} \, | \, a(g)\leq p \} \, \quad \forall \, p \in \N \, .
$$
Note that $F_pA \cdot F_qA \subseteq F_{p+q}A$, in fact from Corollary \ref{UTCRsympl} it follows that 
$$
N(g,h,k) \not= 0 \quad \Rightarrow \quad a(k) \leq a(g)+a(h) \, .
$$

\begin{prop}\label{TCR=Rees}
Let $V$ be a finite dimensional symplectic vector space over $\C$, let $G \subset {\rm Sp}(V)$ be a finite subgroup,
let $T=\C^*$ acting on $V$ by scalar multiplication. Then we have the following isomorphism of $\Q[t]$-algebras:
\begin{eqnarray*}
\Phi \colon \left( H_{T, {\rm CR}}^*([V/G]), \cup_{T, {\rm CR}} \right) &\to& {\rm Rees}_F \left( Z\left( \Q[G] \right) \right) \\
\fo_{[g]} &\mapsto & t^{a(g)} \fo_{[g]}
\end{eqnarray*}
with $\Phi(t)=t$, and \(\Phi\) extending \(\mathbb Q[t]\)-linearly,
where ${\rm Rees}_F \left( Z\left( \Q[G] \right) \right)$ is the Rees algebra of $Z\left( \Q[G] \right)$ with respect to the filtration
defined above.
$\Phi$ is not graded with respect to the Chen–Ruan grading (see Remark \ref{interpolation}).
\end{prop}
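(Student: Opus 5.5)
The plan is to check directly that $\Phi$ is a well-defined $\Q[t]$-linear bijection onto ${\rm Rees}_F(Z(\Q[G]))$, and then that it is multiplicative, using the explicit product formula of Corollary \ref{UTCRsympl}. Throughout we use the identification \eqref{TCRV/G}. This gives the free $\Q[t]$-basis $\{\fo_{[g]} \mid g\in\overline{G}\}$ of $H^*_{T,{\rm CR}}([V/G])$.

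\textbf{Step 1 (image and bijectivity).} Since $a(g)\le a(g)$, we have $\fo_{[g]}\in F_{a(g)}A$, so $t^{a(g)}\fo_{[g]}\in (F_{a(g)}A)t^{a(g)}\subseteq {\rm Rees}_F(A)$. The age $a(g)=\tfrac12{\rm codim}(V^g)$ is a natural number, so the exponents make sense. Hence $\Phi$ lands in the Rees algebra.

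For surjectivity, note that each $F_pA$ is spanned by the $\fo_{[g]}$ with $a(g)\le p$. Therefore ${\rm Rees}_F(A)=\bigoplus_p (F_pA)t^p$ is spanned over $\Q$ by the elements $t^p\fo_{[g]}$ with $p\ge a(g)$. Each such element equals $t^{p-a(g)}\cdot t^{a(g)}\fo_{[g]}=\Phi(t^{p-a(g)}\fo_{[g]})$.

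For injectivity, $\Phi$ sends the $\Q[t]$-basis $\{\fo_{[g]}\}$ to $\{t^{a(g)}\fo_{[g]}\}$. The latter elements are $\Q[t]$-linearly independent in the free $\Q[t]$-module $A[t]$ with basis $\{\fo_{[g]}\}$, because $t$ is a nonzerodivisor there.

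\textbf{Step 2 (multiplicativity).} By $\Q[t]$-bilinearity it suffices to treat pairs of basis elements. Applying Corollary \ref{UTCRsympl},
\begin{equation*}
\Phi\bigl(\fo_{[g]}\cup_{T,{\rm CR}}\fo_{[h]}\bigr)=\sum_{k\in\overline{G}}N(g,h,k)\,t^{a(g)+a(h)-a(k)}\,t^{a(k)}\fo_{[k]}=t^{a(g)+a(h)}\sum_{k\in\overline{G}} N(g,h,k)\fo_{[k]} .
\end{equation*}
By the definition of the structure constants, the right-hand side is $t^{a(g)+a(h)}\,\fo_{[g]}\fo_{[h]}=\Phi(\fo_{[g]})\Phi(\fo_{[h]})$ in $A[t]$. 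The unit $\fo_{[e]}$ has age $0$ and maps to $\fo_{[e]}$, which is the unit of the Rees algebra.

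The only point that needs care is that the exponents $a(g)+a(h)-a(k)$ in Corollary \ref{UTCRsympl} are nonnegative whenever $N(g,h,k)\ne0$. Otherwise the left-hand product would not even lie in $\Q[t]\otimes Z(\Q[G])$. This holds because $N(g,h,k)\neq 0$ means $k=g'h'$ with $g'\in[g]$ and $h'\in[h]$. Then $V^{g'}\cap V^{h'}\subseteq V^{k}$ gives ${\rm codim}V^k\le{\rm codim}V^{g'}+{\rm codim}V^{h'}$. Halving, this is exactly $a(k)\le a(g)+a(h)$, the same fact already used to show that $F$ is multiplicative.

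The final remark, that $\Phi$ is not graded, needs no proof. The Chen–Ruan degree of $\fo_{[g]}$ is $2a(g)$, while $t^{a(g)}\fo_{[g]}$ naturally has $t$-degree $a(g)$ in the Rees grading, and these do not match under $\Phi$. I expect no serious obstacle; the only substantive input is the nonnegativity of the exponents, which is the age subadditivity just described.
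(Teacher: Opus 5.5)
Your proof is correct and follows the paper's argument: you show that ${\rm Rees}_F(Z(\Q[G]))$ is free over $\Q[t]$ on $\{t^{a(g)}\fo_{[g]}\}$ (your surjectivity/injectivity split is exactly this), so $\Phi$ is a $\Q[t]$-module isomorphism, and multiplicativity is then immediate from Proposition~\ref{musympl} (equivalently Corollary~\ref{UTCRsympl}); your extra checks of the unit and of $a(k)\le a(g)+a(h)$ are harmless, the latter already being encoded in \eqref{rkFgh}. One caveat on the side remark: your degree comparison only shows that the Rees degree is uniformly half the Chen--Ruan degree (for $t$ as well), which is a rescaling rather than a failure of gradedness, so it does not really justify the non-gradedness clause --- but the paper also only asserts that clause, and the isomorphism does not depend on it.
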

\begin{proof}
First of all we observe that ${\rm Rees}_F \left( Z\left( \Q[G] \right) \right)$
is a free $\Q[t]$-module with basis $\{ t^{a(g)}\fo_{[g]} \, | \, g\in \overline{G}\}$.
To see this, set $\overline{G}_p:= \{ g\in \overline{G} \, | \, a(g)\leq p \}$.
Then every element of the Rees algebra is of the  form 
$$
\sum_{p=0}^d c_pt^p \, , \quad \mbox{for some} \, c_p\in F_pZ\left( \Q[G] \right) \, .
$$
Writing $c_p=\sum_{g\in \overline{G}_p} \gamma_{g, p} \fo_{[g]}$, for some $\gamma_{g, p}\in \Q$, we have that
$$
\sum_{p=0}^d c_pt^p = \sum_{p=0}^d \sum_{g\in \overline{G}_p} \gamma_{g, p}t^{p-a(g)} \left( t^{a(g)} \fo_{[g]} \right) \, .
$$
Finally, since $\{ \fo_{[g]} \, | \, g\in \overline{G}\}$ is linearly independent in $Z\left( \Q[G] \right)$,
$\{ t^{a(g)}\fo_{[g]} \, | \, g\in \overline{G}\}$ is linearly independent in $\left( Z\left( \Q[G] \right)\right) [t]$.

This observation, together with \eqref{TCRV/G}, implies that $\Phi$ is an isomorphism of $\Q[t]$-modules. 

By Proposition \ref{musympl}, $\Phi$ is a homomorphism of algebras, therefore it is an isomorphism of $\Q[t]$-algebras. 
\end{proof}

\begin{rmk}\label{interpolation}
Note that, under the hypotheses of the previous proposition, 
$$
{\rm Rees}_F \left( Z\left( \Q[G] \right) \right)/(t) \cong {\rm gr}_F \left( Z\left( \Q[G] \right) \right) \, , \qquad
{\rm Rees}_F \left( Z\left( \Q[G] \right) \right)/(t-1) \cong Z\left( \Q[G] \right) \, , 
$$
therefore the Rees algebra interpolates between the center $Z\left(\Q[G]\right)$ and its associated graded 
algebra with respect to the age filtration. By Corollary \ref{UTCRsympl}, the latter is naturally 
isomorphic to the non-equivariant Chen–Ruan cohomology algebra $H^*_{\rm CR}([V/G])$.

Note also that the age filtration is related to the filtration of Ginzburg–Kaledin in \cite{GK} 
by reindexing the degrees by a factor of two,
therefore we recover their presentation of the orbifold Chen–Ruan product.
\end{rmk}

\section{The McKay spectral decomposition and the Bryan–Gholampour transformation}\label{sec:mckay-coordinates}

Let $G\subset \mathrm{SL}_2(\mathbb C)$ be a finite subgroup. From now on we extend scalars from \(\mathbb Q\) to \(\mathbb C\). In this
section we isolate a character-theoretic feature of the Bryan--Gholampour
transformation which is common to all ADE types. The starting point is the
classical observation of McKay that the columns of the character table are
eigenvectors of the affine Cartan matrix \cite{McKay}. The same observation is
implicit in the derivation of the change of variables in
\cite[Sec.~5]{bryan2008root}; our purpose here is to use it systematically in
the comparison with the Chen--Ruan product.  For an abelian group, the resulting
character transform is the Fourier transform on the finite group.  In particular,
for $G=C_n$ the McKay coordinates introduced below become the usual discrete
Fourier coordinates.  We keep the explicit Fourier conventions and inversion
formula in Section~\ref{sec:cyclic}, where they are used in the calculation.

Let $\pi \colon X \to \C^2/G$ be the minimal resolution. The exceptional fiber $\pi^{-1}(0)$ is of the form
\[
\pi^{-1}(0)=E_1 \cup \ldots \cup E_r \, ,
\]
where $E_i$ are smooth rational curves with self-intersection $E_i \cdot E_i = -2$ and pairwise transversal intersection
such that the corresponding dual graph is a Dynkin diagram of ADE type. 
Equivalently, this configuration can be described in terms of the
intersection form on $H_2(X,\mathbb Z)$.
The group $H_2(X, \Z)$ is freely generated by the classes 
$[E_i]$ of $E_i$ and it is equipped with the intersection product $( \, , \, )$. Let 
\[
\langle \, , \, \rangle := - ( \, , \, ) \, .
\]
Then $\left( H_2(X, \Z), \langle \, , \, \rangle \right)$ identifies with the root lattice associated with the previous Dynkin diagram
in such a way that $[E_1], \ldots , [E_r]$ correspond to a system of simple roots and $\langle [E_i] , [E_j] \rangle = C_{ij}$
is the corresponding Cartan matrix.
(cf. \cite{CS98} and the references therein, \cite{Humphreys} for what concerns Lie theory).

Let now $\rho_0, \ldots , \rho_r$ be the irreducible representations of $G$, with $\rho_0$ being the trivial one,
let $\chi_i$ be the character of $\rho_i$ and denote
\[
{\rm Irr}(G):=\{\chi_0,\chi_1,\ldots,\chi_r\},
\qquad \chi_0=1 \, .
\]
Let $d_i:=\chi_i(1)$ be the dimension of the representation $\rho_i$. 
The McKay correspondence establishes a bijection  $\chi_i \longleftrightarrow E_i$, $i=1, \ldots , r$,
such that the McKay adjacency matrix $A=(a_{ij})_{0\leq i,j\leq r}$,
defined by  
\[
V\otimes \rho_i\cong \bigoplus_{j=0}^r a_{ij}\rho_j,
\]
where  $V$ is the representation of $G$ on $\C^2$ given by the inclusion $G\subset {\rm SL}_2(\C)$, satisfies the following identity:
\[
\widetilde C=2I-A \, ,
\]
where $\widetilde C$ is the affine Cartan matrix,  
see, for example,  \cite{GSVMcKay},  \cite{McKay}, \cite{ReidMcKay}, \cite{DolgachevMcKay}. 

Using the natural identification
\[
H^2(X,\mathbb R)\simeq H_2(X,\mathbb R)^*
\]
and the non-degenerate pairing $\langle\, ,\,\rangle$, we identify
$H^2(X,\mathbb R)$ with $H_2(X,\mathbb R)$.
We denote by $\alpha_i\in H^2(X,\mathbb R)$ the element corresponding
to $[E_i]$ under this identification.
Furthermore, we consider the complexified space $H^2(X, \C)$ and we denote by $\langle \, , \, \rangle$
the  complex-bilinear extension of the pairing. Note that
\[
\langle\alpha_i,\alpha_j\rangle=C_{ij},
\]
where $C$ is the Cartan matrix of the finite ADE root system.

Let us define
\[
\beta_i :=\alpha_i,\quad 1\leq i\leq r,
\qquad
\beta_0 :=-\sum_{i=1}^r d_i\alpha_i.
\]
Hence we have
\begin{equation}\label{affine-dimension-relation}
\sum_{i=0}^r d_i \beta_i =0 \, .
\end{equation}
Note that, by the McKay correspondence, the dimensions \(d_i\) are the coefficients of the highest root.

\begin{prop}[McKay spectral decomposition {\cite{McKay}}]\label{prop:mckay-spectral}
\label{prop:mckay-spectral}
With the notation above, the following statements hold true.
\begin{enumerate}
    \item[(i)] The Gram matrix of the family
    $\beta_0,\ldots,\beta_r$ is the affine Cartan matrix:
    \[
        \langle\beta_i,\beta_j\rangle=\widetilde C_{ij},
        \qquad 0\leq i,j\leq r.
    \]

    \item[(ii)] For $g\in G$, set
    \[
        u_g:=\bigl(\chi_0(g),\ldots,\chi_r(g)\bigr)^\mathsf{T}.
    \]
    Then
\begin{equation}\label{mckay-eigenvector}
\widetilde C\,u_g=(2-\chi_V(g))u_g.
\end{equation}
    In particular, the columns of the character table form a system of
    eigenvectors for the affine Cartan matrix.
\end{enumerate}
\end{prop}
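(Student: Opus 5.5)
For part (i), I will compute the Gram matrix of $\beta_0,\ldots,\beta_r$ directly from the definitions, using $\langle\alpha_i,\alpha_j\rangle=C_{ij}$. For $1\le i,j\le r$ there is nothing to prove, because the affine Cartan matrix $\widetilde C$ restricted to the indices $1,\ldots,r$ is the finite Cartan matrix $C$. For the mixed entries I will write
\[
\langle\beta_0,\beta_j\rangle=-\sum_{i=1}^r d_i C_{ij},
\qquad
\langle\beta_0,\beta_0\rangle=\sum_{i,j=1}^r d_id_jC_{ij}.
\]
The key input is the McKay identity $\widetilde C=2I-A$. Taking dimensions in $V\otimes\rho_j\cong\bigoplus_i a_{ji}\rho_i$ gives $2d_j=\sum_{i=0}^r a_{ji}d_i$. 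Since $A$ is symmetric (because $V\cong V^*$), this says $\widetilde C\,(d_0,\ldots,d_r)^{\mathsf T}=0$. Written out for row $j\ge1$, it reads $\widetilde C_{j0}+\sum_{i\ge1}d_iC_{ij}=0$, so $\langle\beta_0,\beta_j\rangle=\widetilde C_{0j}$. Written out for row $0$, it reads $\widetilde C_{00}=-\sum_{i\ge1}d_i\widetilde C_{0i}$, so
\[
\langle\beta_0,\beta_0\rangle=-\sum_j d_j\langle\beta_0,\beta_j\rangle=-\sum_j d_j\widetilde C_{0j}=\widetilde C_{00}=2.
\]
The only care needed is the bookkeeping of symmetry and the fact that $d_0=1$.

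For part (ii), I will use only characters. Taking characters of $V\otimes\rho_i\cong\bigoplus_j a_{ij}\rho_j$ and evaluating at $g$ gives $\chi_V(g)\chi_i(g)=\sum_j a_{ij}\chi_j(g)$, which means $Au_g=\chi_V(g)u_g$. Hence
\[
\widetilde C u_g=(2I-A)u_g=(2-\chi_V(g))u_g.
\]
Finally, column orthogonality shows that the vectors $u_g$, for $g\in\overline G$, are linearly independent. There are $r+1$ of them, so they form an eigenbasis.

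The main obstacle is essentially notational rather than mathematical. One has to confirm that the identification of $\widetilde C$ with $2I-A$ is indexed compatibly with the bijection $\chi_i\leftrightarrow E_i$ and with $\chi_0$ sitting at the affine node. Once that is fixed, both parts are one-line consequences of $\widetilde C=2I-A$ together with the tensor-product decomposition.
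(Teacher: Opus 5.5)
Your proposal is correct and follows the paper's proof: $\widetilde C d=0$ from the dimension count, symmetry of $\widetilde C$ for the mixed entries, and $Au_g=\chi_V(g)u_g$ from taking characters for (ii). The only minor difference is that you get $\langle\beta_0,\beta_0\rangle=\widetilde C_{00}$ from the $0$-th row of $\widetilde C d=0$, whereas the paper uses that $\beta_0$ is minus the highest root. Your version is slightly more self-contained; also note that $\widetilde C d=0$ itself needs no symmetry of $A$.
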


\begin{proof}
Taking dimensions in the McKay decomposition
\[
    V\otimes\rho_i\cong\bigoplus_{j=0}^r a_{ij}\rho_j
\]
gives
\[
    2d_i=\sum_{j=0}^r a_{ij}d_j,
    \qquad 0\leq i\leq r.
\]
Hence, if $d:=(d_0,\ldots,d_r)^\mathsf T$, then $\widetilde C\,d=0$. 
For $1\leq i,j\leq r$, we have
\[
    \langle\beta_i,\beta_j\rangle
    =\langle\alpha_i,\alpha_j\rangle
    =C_{ij}
    =\widetilde C_{ij}.
\]
Moreover, since $d_0=1$ and $\widetilde C$ is symmetric,
$\widetilde C d=0$ gives, for $1\leq j\leq r$,
\[
    \widetilde C_{0j}
    =-\sum_{i=1}^r d_i\widetilde C_{ij}
    =-\sum_{i=1}^r d_i C_{ij}.
\]
Using the definition of $\beta_0$, we therefore obtain
\[
    \langle\beta_0,\beta_j\rangle
    =
    -\sum_{i=1}^r d_i\langle\alpha_i,\alpha_j\rangle
    =
    -\sum_{i=1}^r d_i C_{ij}
    =
    \widetilde C_{0j}.
\]
By symmetry the same equality holds with the two indices interchanged.
Finally, $\beta_0$ is the negative of the highest root, and hence
\[
    \langle\beta_0,\beta_0\rangle=2=\widetilde C_{00}.
\]
This proves (i).

For (ii), taking characters in the same McKay decomposition gives
\[
    \chi_V(g)\chi_i(g)
    =
    \sum_{j=0}^r a_{ij}\chi_j(g),
    \qquad 0\leq i\leq r.
\]
Equivalently,
\[
    A u_g=\chi_V(g)u_g,
\]
and therefore
\[
    \widetilde C\,u_g
    =(2I-A)u_g
    =(2-\chi_V(g))u_g,
\]
which proves (ii).
\end{proof}

For every  conjugacy class $[g]$, let 
\[
\Delta_g :=\sqrt{\chi_V(g)-2} \, ,
\]
with the same choice of square root as in the Bryan--Gholampour
transformation, and let
\[
z_g :=|C_G(g)| \, ,
\] 
where $C_G(g)$ is the centralizer of $g$ in $G$. 
Since characters are class functions, the following vector depends only on the
conjugacy class $[g]$ of $g$. Define 
\begin{equation}\label{def:mckay-coordinate}
v_g:=\sum_{i=0}^r \chi_i(g^{-1})\beta_i
=\sum_{i=1}^r\bigl(\chi_i(g^{-1})-d_i\bigr)\alpha_i.
\end{equation}
The second equality follows from \eqref{affine-dimension-relation}. 

Let $e\in G$ denote the identity of the group $G$. Note that $\Delta_e=0$ and that $v_e=0$. 
For $g\not= e$, we have  $\chi_V(g) \not= 2$, hence $\Delta_g \not= 0$. 

We refer to the vectors $v_g$, for $[g]\ne[e]$, as the \textit{McKay coordinate
vectors}.

\begin{prop}\label{prop:mckay-coordinates}
With the notation above, the following statements hold.
\begin{enumerate}
\item[(i)] The Bryan--Gholampour transformation extends to the affine family $\{ \beta_i \, | \, 0\leq i\leq r \}$ as
\begin{equation}\label{BG-affine-family}
L(\beta_i)=\sum_{[h]\neq[e]}\Delta_h\chi_i(h)\fo_{[h]},
\qquad 0\leq i\leq r.
\end{equation}
\item[(ii)] For every non-trivial conjugacy class $[g]$, in McKay coordinates the transformation is diagonal:
\begin{equation}\label{BG-diagonal-general}
L(v_g)=z_g\Delta_g\fo_{[g]}.
\end{equation}
Consequently, $L$ is an isomorphism of $\C[t]$-modules and the vectors $v_g$, indexed by the non-trivial conjugacy classes,
form a basis of the complexified finite root space.
\item[(iii)] For every pair of non-trivial conjugacy classes $[g],[h]$, the Cartan pairing is diagonal up to inversion of conjugacy
classes:
\begin{equation}\label{mckay-pairing-general}
\langle v_g,v_h\rangle=
\begin{cases}
z_g\bigl(2-\chi_V(g)\bigr),& [h]=[g^{-1}],\\
0,& [h]\neq[g^{-1}].
\end{cases}
\end{equation}
\end{enumerate}
\end{prop}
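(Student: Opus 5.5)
Since $\chi_0=1$ and $\Delta_e=0$, it is natural to read \eqref{BGLintro} as valid for all $i$, with the sum running over all classes including $[e]$. The plan is then to compute $L(v_g)$ directly from this extension and to deduce (ii) and (iii) from the column orthogonality relations, using Proposition \ref{prop:mckay-spectral} for the pairing.

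For (i), the case $1\le i\le r$ is \eqref{BGLintro} with the $[e]$ term dropped, because $\Delta_e=0$. For $i=0$ we use $\beta_0=-\sum_{i\ge1}d_i\alpha_i$ and linearity:
\[
L(\beta_0)=-\sum_{[h]\neq[e]}\Delta_h\Bigl(\sum_{i=1}^r d_i\chi_i(h)\Bigr)\fo_{[h]}.
\]
For $h\neq e$, column orthogonality against the identity column gives $\sum_{i=0}^r d_i\chi_i(h)=0$. Hence $\sum_{i\ge1}d_i\chi_i(h)=-1=-\chi_0(h)$, which is \eqref{BG-affine-family} for $i=0$.

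For (ii), we substitute (i) into \eqref{def:mckay-coordinate}:
\[
L(v_g)=\sum_{[h]\neq[e]}\Delta_h\Bigl(\sum_{i=0}^r\chi_i(g^{-1})\chi_i(h)\Bigr)\fo_{[h]}.
\]
Since $\chi_i(g^{-1})=\overline{\chi_i(g)}$, the second orthogonality relation says the inner sum equals $z_g$ when $[h]=[g]$ and $0$ otherwise. This gives \eqref{BG-diagonal-general}. There are $r$ non-trivial classes, and $L$ sends the $r$ vectors $v_g$ to nonzero multiples ($z_g\Delta_g\neq0$) of the distinct basis vectors $\fo_{[g]}$. Therefore the $v_g$ are linearly independent, hence a basis of the $r$-dimensional complexified root space. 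Together with $L(1)=1$ and $\C[t]$-linearity, $L$ then maps a $\C[t]$-basis to a $\C[t]$-basis, so it is an isomorphism of $\C[t]$-modules.

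For (iii), we write $v_g=\sum_i\chi_i(g^{-1})\beta_i$ and apply Proposition \ref{prop:mckay-spectral}(i):
\[
\langle v_g,v_h\rangle=u_{g^{-1}}^{\mathsf T}\,\widetilde C\,u_{h^{-1}}.
\]
Proposition \ref{prop:mckay-spectral}(ii) gives $\widetilde C u_{h^{-1}}=(2-\chi_V(h^{-1}))u_{h^{-1}}$, so
\[
\langle v_g,v_h\rangle=(2-\chi_V(h))\sum_i\chi_i(g^{-1})\chi_i(h^{-1}),
\]
using that $\chi_V$ is real because $V\cong V^*$ for $G\subset\mathrm{SL}_2(\C)$. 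Writing $\chi_i(g^{-1})=\overline{\chi_i(g)}$ and $\chi_i(h^{-1})=\chi_i\bigl((h^{-1})\bigr)$, orthogonality gives $z_g$ if $[h^{-1}]=[g]$ and $0$ otherwise. When $[h]=[g^{-1}]$ we have $\chi_V(h)=\chi_V(g)$, which yields \eqref{mckay-pairing-general}.

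The only delicate point is bookkeeping: keeping $\chi_i(g^{-1})$ versus $\overline{\chi_i(g)}$ straight, and making sure the $i=0$ extension in (i) is handled correctly. Nothing else is a genuine obstacle.
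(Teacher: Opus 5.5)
Your proof is correct and follows the paper's argument essentially step by step. In (i) you treat the $i=0$ case by orthogonality against the identity column, which is the paper's vanishing of the regular character off the identity. In (ii) you use column orthogonality, and in (iii) you combine the Gram-matrix and eigenvector statements of Proposition~\ref{prop:mckay-spectral} with column orthogonality.

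The only cosmetic difference is in (ii). You deduce linear independence of the $v_g$ directly from that of their images, and then observe that $L$ sends a $\C[t]$-basis to a $\C[t]$-basis. The paper instead first proves surjectivity and invokes equality of ranks of free $\C[t]$-modules. Both are fine.
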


\begin{proof}
For $i\geq1$, \eqref{BG-affine-family} is the Bryan--Gholampour formula
\eqref{BGLintro}. For $i=0$, using \eqref{affine-dimension-relation} and the
 character $\chi_{\rm reg}$ of the regular representation, we have
\[
L(\beta_0) = - \sum_{[h] \not= [e]} \left( \chi_{\rm reg} (h) - \chi_0 (h) \right) \Delta_h \fo_{[h]} = \sum_{[h] \not= [e]}\Delta_h \fo_{[h]} \, ,
\]
which proves the same formula for $\beta_0$.

Applying $L$ to \eqref{def:mckay-coordinate} and using  
\cite[Ch. 2, Prop. 7]{Serre77} (column orthogonality relation for characters),
\[
\sum_{i=0}^r \chi_i(g^{-1})\chi_i(h)
=
\begin{cases}
|C_G(g)|,& [h]=[g],\\
0,& [h]\neq[g],
\end{cases}
\]
gives
\[
L(v_g)
=
\sum_{[h]\neq[e]}\Delta_h
\left(\sum_{i=0}^r\chi_i(g^{-1})\chi_i(h)\right)\fo_{[h]}
=z_g\Delta_g\fo_{[g]},
\]
which proves (ii).
Finally, by (13), the images $L(v_g)=z_g\Delta_g \fo_{[g]}$, indexed by non-trivial conjugacy classes,
are linearly independent,
since $z_g\Delta_g\neq0$. Together with
\(L(1)=1\) and \(\mathbb C[t]\)-linearity, this shows that $L$ is surjective.
Since both sides are free \(\mathbb C[t]\)-modules of the same finite rank, \(L\) is an isomorphism.
 It follows from this that the vectors $v_g$ are linearly
independent. Their number is the rank of the finite root system, so
they form a basis.

For (iii), Proposition~\ref{prop:mckay-spectral}  implies
\[
\langle v_g,v_h\rangle
=
\sum_{i,j=0}^r
\chi_i(g^{-1})\chi_j(h^{-1})\widetilde C_{ij}
=
(2-\chi_V(h^{-1}))
\sum_{i=0}^r
\chi_i(g^{-1})\chi_i(h^{-1}) \, .
\]
By column orthogonality, the last expression vanishes unless
$[h^{-1}]=[g]$. In that case,
\[
\sum_{i=0}^r
\chi_i(g^{-1})\chi_i(h^{-1})=z_g
\]
and $\chi_V(h^{-1})=\chi_V(g)$, hence
\[
\langle v_g,v_h\rangle
=
z_g(2-\chi_V(g)).
\]
This proves (iii).
\end{proof}

\begin{rmk}
The invertibility of \(L\) is already implicit in Bryan–Gholampour's construction: 
their change of variables identifies the non-degenerate Cartan quadratic form with the non-degenerate 
quadratic form on the non-trivial orbifold sectors.
\end{rmk}

The preceding proposition separates the part of the comparison which follows
formally from character theory from the genuinely quantum identity. Write the
specialized Bryan--Gholampour product on the finite root space as
\begin{equation}\label{BG-product-Q}
x*y=-|G|\langle x,y\rangle t^2+t\,\mathcal Q(x,y),
\end{equation}
where
\begin{equation}\label{def:QBG}
\mathcal Q(x,y)=
\sum_{\beta\in R^+}
\langle x,\beta\rangle\langle y,\beta\rangle
\frac{1+q^\beta}{1-q^\beta}\,\beta
\end{equation}
is evaluated at the specialization \eqref{BGq} and $R^+$ denotes the set of positive roots. 
More explicitly, $q^\beta:=\prod_{i=1}^r q_i^{b_i}= \exp \left( {\frac{2\pi \sqrt{-1}}{|G|}\sum_{i=1}^rd_i b_i}\right)$ 
for $\beta=\sum_{i=1}^r b_i\alpha_i$.
Note that, in the previous expression, $q^\beta \not=1$. Indeed, 
since \(\beta\) is a  positive root, \(b_i\ge 0\) for all \(i\) and at least one \(b_i\) is positive. 
Moreover, since the highest root is \(\theta= -\beta_0 = \sum_i d_i\alpha_i\), one has \(b_i\le d_i\). Hence
$$
1\le \sum_{i=1}^r d_i b_i\le \sum_{i=1}^r d_i^2=|G|-1, 
$$
where the last equality follows from \(\sum_{i=0}^r d_i^2=|G|\). Therefore \(q^\beta\ne1\).

\begin{prop}[Uniform reduction of the CCRC]\label{prop:uniform-reduction}
With the notation above, 
\[
L:
QH^*_{\mathbb C^*}(X)_{\pi}
\longrightarrow
\left( H^*_{\mathbb C^*,\mathrm{CR}}
\bigl([\mathbb C^2/G]\bigr) \, , \, \cup_{\C^*, {\rm CR}} \right) \, 
\]
is an isomorphism of $\mathbb C[t]$-algebras if and only if
\begin{equation}\label{master-identity}
\mathcal Q(v_g,v_h)
=
z_gz_h\Delta_g\Delta_h
\sum_{[k]\neq[e]}
\frac{N(g,h,k)}{z_k\Delta_k}\,v_k \, , 
\end{equation}
for every $[g], [h] \not= [e]$, where $N(g, h, k)$ are the structure constants of $Z(\Q [G])$ (see Remark \ref{rmk_Burnside}).
\end{prop}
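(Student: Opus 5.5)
Since $L$ is already an isomorphism of $\C[t]$-modules with $L(1)=1$ (Proposition~\ref{prop:mckay-coordinates}(ii)), and both products are $\C[t]$-bilinear with the identity acting as unit, $L$ is an algebra isomorphism if and only if $L(x*y)=L(x)\cup_{\C^*,{\rm CR}}L(y)$ for $x,y$ running over a $\C$-basis of the finite root space. We take the McKay coordinate basis $\{v_g\}_{[g]\neq[e]}$ and compare $L(v_g*v_h)$ with $L(v_g)\cup L(v_h)=z_gz_h\Delta_g\Delta_h\,\fo_{[g]}\cup\fo_{[h]}$.

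On the orbifold side, $G\subset \mathrm{SL}_2(\C)=\mathrm{Sp}(\C^2)$, so Corollary~\ref{UTCRsympl} applies with $a(e)=0$ and $a(k)=1$ for $k\neq e$. For $g,h\neq e$ this gives
\[
\fo_{[g]}\cup\fo_{[h]}=N(g,h,e)\,t^2\,\fo_{[e]}+\sum_{[k]\neq[e]}N(g,h,k)\,t\,\fo_{[k]} .
\]
Here $N(g,h,e)=|[g]|$ if $[h]=[g^{-1}]$ and $0$ otherwise, by Remark~\ref{rmk_Burnside}. So the $t^2$-part is $|[g]|\,z_gz_h\Delta_g\Delta_h\,\delta_{[h],[g^{-1}]}$. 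When $[h]=[g^{-1}]$ we have $z_h=z_g$ and $\Delta_h=\Delta_g$, since $\chi_V$ is real for $\mathrm{SL}_2$. Using $z_g|[g]|=|G|$ and $\Delta_g^2=\chi_V(g)-2$, the $t^2$-part equals $|G|\,z_g(\chi_V(g)-2)\,\delta_{[h],[g^{-1}]}$.

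On the resolution side, by \eqref{BG-product-Q} the $t^2$-part of $L(v_g*v_h)$ is $-|G|\langle v_g,v_h\rangle$. By Proposition~\ref{prop:mckay-coordinates}(iii) this equals $-|G|z_g(2-\chi_V(g))\delta_{[h],[g^{-1}]}$, which matches the orbifold $t^2$-part exactly. So the classical terms always agree.

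The $t$-parts live in the span of the $\fo_{[k]}$ with $[k]\neq[e]$, while the $t^2$-parts are multiples of $\fo_{[e]}=1$. These are independent over $\C$, so compatibility reduces to equality of the $t$-coefficients:
\[
L(\mathcal Q(v_g,v_h))=z_gz_h\Delta_g\Delta_h\sum_{[k]\neq[e]}N(g,h,k)\fo_{[k]} .
\]
The right-hand side equals $L\bigl(z_gz_h\Delta_g\Delta_h\sum_k \tfrac{N(g,h,k)}{z_k\Delta_k}v_k\bigr)$ by \eqref{BG-diagonal-general}. Since $L$ is injective on the root space, this is equivalent to \eqref{master-identity}, and both implications of the claimed equivalence follow.

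The only point needing care is that $\mathcal Q$ is well defined at the specialization; this holds because $q^\beta\neq1$, as shown above. One must also use that $\Delta_{g^{-1}}=\Delta_g$, which holds for the chosen branch of the square root because $\chi_V(g^{-1})=\overline{\chi_V(g)}=\chi_V(g)$. The substantive content, verifying \eqref{master-identity} itself, is deferred to the type-by-type sections and is not needed for this equivalence.
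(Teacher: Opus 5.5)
Your proposal is correct and follows the paper's proof essentially step by step. You reduce to products of the McKay basis vectors $v_g$, check that the identity-sector $t^2$-terms agree using Proposition~\ref{prop:mckay-coordinates}(iii), Corollary~\ref{UTCRsympl}, $N(g,g^{-1},e)=|G|/z_g$ and $\Delta_{g^{-1}}=\Delta_g$, and then obtain \eqref{master-identity} by comparing the $t$-coefficients in the non-trivial sectors, using the diagonal form \eqref{BG-diagonal-general} and the injectivity of $L$.
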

\begin{proof}
Since $L$ is an isomorphism of $\C[t]$-modules (Proposition \ref{prop:mckay-coordinates} (ii)),
 \(L(1)=1\) and both products are \(\mathbb C[t]\)-bilinear,
 it is an algebra isomorphism 
if and only if 
\[
L(v_g * v_h) = L(v_g) \cup_{\C^*, {\rm CR}} L(v_h) \, ,
\] 
for every $[g], [h] \not= [e]$. 
Using \eqref{BG-product-Q} and \eqref{mckay-pairing-general}, the $t^2$-term on the left-hand side
vanishes unless $[h]=[g^{-1}]$. In that case it is
\[
-|G|z_g(2-\chi_V(g))t^2 \fo_{[e]}.
\]
On the other hand, by \eqref{BG-diagonal-general} and Corollary ~\ref{UTCRsympl},
\[
L(v_g)\cup_{\mathbb C^*,CR}L(v_h)
=
z_gz_h\Delta_g\Delta_h
\sum_{[k]}
N(g,h,k)t^{a(g)+a(h)-a(k)}\fo_{[k]}.
\]
For every non-trivial $g\in G$, one has $a(g)=1$, since $g$
has no nonzero fixed vector in the natural two-dimensional
representation. Hence a term of degree $t^2$ can occur only in the
identity sector $[k]=[e]$. Moreover,
\[
N(g,h,e)\neq0
\quad\Longleftrightarrow\quad
[h]=[g^{-1}].
\]
Thus the $t^2$-term vanishes unless $[h]=[g^{-1}]$. In that case it is
\[ 
z_gz_{g^{-1}} \Delta_g \Delta_{g^{-1}} N(g, g^{-1}, e ) \, .
\] 
Since $z_{g^{-1}} =z_g$,  $\Delta_{g^{-1}} =\Delta_{g}$ because $V$ is self-dual, and $N(g, g^{-1}, e )= |[g]| = \frac{|G|}{z_g}$,
the previous expression equals
\[
z_g^2 \Delta_g^2 \frac{|G|}{z_g} = -|G|z_g(2-\chi_V(g)) \, .
\]

This shows that the identity-sector contribution is automatically compatible for all finite
subgroups $G\subset\mathrm{SL}_2(\mathbb C)$.

It remains to compare the non-trivial sectors. Using
\eqref{BG-diagonal-general}, the Chen--Ruan side is
\[
z_gz_h\Delta_g\Delta_h\,t
\sum_{[k]\neq[e]}N(g,h,k)\fo_{[k]}.
\]
On the quantum side it is $tL(\mathcal Q(v_g,v_h))$. Since the $\{ v_k \, | \, [k] \not= [e] \}$ is a
basis, we can write uniquely
\[
\mathcal Q(v_g,v_h) = \sum_{[k] \not= [e]} {\mathcal Q(v_g,v_h)}_{[k]} v_k \, , \qquad {\mathcal Q(v_g,v_h)}_{[k]} \in \C \, .
\]
Therefore
\[
L(\mathcal Q(v_g,v_h)) = \sum_{[k] \not= [e]} {\mathcal Q(v_g,v_h)}_{[k]} L(v_k) = 
\sum_{[k] \not= [e]} {\mathcal Q(v_g,v_h)}_{[k]} z_k\Delta_k\fo_{[k]} \, .
\]
So, $L$ is an algebra homomorphism if and only if
\[
{Q(v_g,v_h)}_{[k]} = \frac{z_gz_h\Delta_g \Delta_h N(g,h,k)}{z_k\Delta_k} \, , \qquad \forall [k] \not= [e] \, ,
\]
which  is equivalent to \eqref{master-identity}.
\end{proof}

\begin{rmk}
Using Burnside's formula, the identity \eqref{master-identity}
admits an equivalent symmetric trilinear formulation. Indeed, pairing
\eqref{master-identity} with $v_k$ and using Proposition~\ref{prop:mckay-coordinates}(iii), we obtain
\[
 \bigl\langle \mathcal Q(v_g,v_h),v_k\bigr\rangle
 =
 -z_gz_h\Delta_g\Delta_h\Delta_k\,
 N(g,h,k^{-1}).
\]
On the other hand, Burnside's formula \eqref{Burnside} gives
\[
 z_gz_h N(g,h,k^{-1})
 =
 |G|\sum_{\chi\in{\rm Irr}(G)}
 \frac{\chi(g)\chi(h)\chi(k)}{\chi(e)}.
\]
Hence the master identity \eqref{master-identity} is equivalently
\[
 \bigl\langle \mathcal Q(v_g,v_h),v_k\bigr\rangle
 =
 -|G|\Delta_g\Delta_h\Delta_k
 \sum_{\chi\in{\rm Irr}(G)}
 \frac{\chi(g)\chi(h)\chi(k)}{\chi(e)}
 .
\]
In particular, this formulation makes explicit the symmetry in the three
conjugacy classes and expresses the remaining comparison as an identity
between the ADE root system, through the quantum correction $\mathcal Q$,
and the character theory of $G$.

The left-hand side may also be viewed as the root-of-unity specialization
of the trigonometric cubic tensor governing the equivariant quantum
cohomology of the ADE resolution. Related trigonometric tensors arise
naturally in Frobenius-manifold descriptions of ADE resolutions; see, for
instance, \cite{BMS25}.
\end{rmk}

\begin{rmk}\label{rmk:BG-origin}
The spectral content of Propositions~\ref{prop:mckay-spectral} and
\ref{prop:mckay-coordinates} is not meant as a new construction of the
Bryan--Gholampour transformation. In \cite[Sec.~5]{bryan2008root}, the change
of variables is obtained by rewriting the Cartan pairing in character-theoretic
form and matching the quadratic terms of the resolution and orbifold
potentials. The purpose of the McKay coordinates is instead to use that same
character-theoretic structure as a uniform framework for proving product
compatibility. Proposition~\ref{prop:uniform-reduction} isolates the remaining
identity which will be verified below in the ADE cases.
\end{rmk}

\section{The cyclic case}\label{sec:cyclic}

In this section \(n\geq 2\) denotes the order of the cyclic group.
We prove the equivariant cohomological crepant resolution conjecture
for cyclic subgroups of \(\mathrm{SL}_2(\mathbb{C})\). We shall see that the
McKay coordinates of Section~\ref{sec:mckay-coordinates} are exactly the usual
discrete Fourier coordinates in this case.

Set $\z :=e^{2\pi i/n}$, $\theta=\frac{\pi}{n}$. For $r\in\mathbb{Z}/n\mathbb{Z}$, set
\[
g_r=
\begin{pmatrix}
\z^r & 0\\
0 & \z^{-r}
\end{pmatrix},
\]
and let $G=C_n = \{g_r\mid r\in\mathbb{Z}/n\mathbb{Z}\} \subset \mathrm{SL}_2(\mathbb{C})$.
Since \(C_n\) is abelian, every conjugacy class is a singleton.
We write $[r]:=\{g_r\}$, and denote the corresponding Chen--Ruan class by 
$\mathbf{1}_{[r]} \in H^*_{\C^*, {\rm CR}}([\C^2/G])$.

Under the McKay correspondence of Section~\ref{sec:mckay-coordinates}, the finite root system associated with \(G=C_n\) is of type 
$A_{n-1}$.

In this case, the specialization \eqref{BGq} is
\[
q_1=\cdots=q_{n-1}=\z = e^{2\pi i/n} \, .
\]
We now describe the McKay coordinates explicitly in terms of the discrete Fourier transform.
The main result of the section is the following.

\begin{thm}\label{thm:cyclic-CCRC}
The Bryan--Gholampour map  \(L\) in \eqref{BGLintro} defines an isomorphism of
\(\mathbb{C}[t]\)-algebras
\[
L: QH_{\mathbb{C}^*}^*(X)_{\pi} \longrightarrow \left( H_{\mathbb{C}^*,\mathrm{CR}}^* \left( [\mathbb{C}^2/C_n] \right),
\cup_{\mathbb{C}^*,\mathrm{CR}}
\right) \, ,
\]
where $QH_{\mathbb{C}^*}^*(X)_{\pi}$ is defined in \eqref{qccr}.
\end{thm}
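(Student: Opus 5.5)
The plan is to verify the master identity \eqref{master-identity} of Proposition~\ref{prop:uniform-reduction}. By that proposition, the identity is equivalent to the theorem. I will work in explicit orthonormal coordinates for $A_{n-1}$, where the McKay coordinate vectors become discrete Fourier vectors.

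\textbf{Step 1: coordinates.} Realize the root lattice inside $\mathbb C^n$ with orthonormal basis $e_1,\dots,e_n$. Set $\alpha_j=e_j-e_{j+1}$ for $1\le j\le n-1$, and take indices modulo $n$, so that $e_0:=e_n$.

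Since all $d_j=1$, we get $\beta_0=-(e_1-e_n)=e_0-e_1$. Hence $\beta_j=e_j-e_{j+1}$ for every $j\in\mathbb Z/n$.

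The characters are $\chi_j(g_r)=\zeta^{jr}$. Substituting into \eqref{def:mckay-coordinate} and reindexing gives
\[
v_r=\sum_{j}\zeta^{-jr}(e_j-e_{j+1})=(1-\zeta^{r})\,f_r,\qquad f_r:=\sum_{j\in\mathbb Z/n}\zeta^{-jr}e_j .
\]
The positive roots are $e_a-e_b$ with $1\le a<b\le n$. For these, $q^{e_a-e_b}=\zeta^{b-a}$, and
\[
\frac{1+\zeta^m}{1-\zeta^m}=i\cot(\pi m/n).
\]
In these coordinates $z_r=n$ and $\Delta_r=2i\sin(\pi r/n)$ for $1\le r\le n-1$. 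This value is the positive multiple of $i$, as the Bryan--Gholampour convention requires. Also $N(r,s,k)=\delta_{k,r+s}$, since the group is abelian.

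\textbf{Step 2: compute $\mathcal Q$ on Fourier vectors.} Expand $\mathcal Q(f_r,f_s)$ from \eqref{def:QBG} and read off the coefficient of $e_c$. Each root containing $e_c$ contributes, and the terms with $c$ as larger or smaller index combine by oddness and $n$-periodicity of $\cot(\pi m/n)$. The result is
\[
\mathcal Q(f_r,f_s)=K(r,s)\,f_{r+s},\qquad
K(r,s)=i\sum_{m=1}^{n-1}(1-\zeta^{-mr})(1-\zeta^{-ms})\cot\!\left(\tfrac{\pi m}{n}\right).
\]
Thus $\mathcal Q$ respects addition of Fourier indices.

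When $r+s\equiv 0$, the vector $f_0$ lies outside the root space. This case is consistent: the summand becomes $|1-\zeta^{mr}|^2\cot(\pi m/n)$, which is odd under $m\mapsto n-m$, so $K(r,-r)=0$. This matches the right-hand side of \eqref{master-identity}, which has no $[e]$ term.

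\textbf{Step 3: reduce to a cotangent identity.} For $r+s\not\equiv0$, \eqref{master-identity} becomes the scalar identity
\[
(1-\zeta^r)(1-\zeta^s)\,K(r,s)=\frac{n\,\Delta_r\Delta_s}{\Delta_{r+s}}\,(1-\zeta^{r+s}).
\]
Expand $K$ using $\sum_m\cot(\pi m/n)=0$ together with the classical finite Fourier--cotangent sum
\[
\sum_{m=1}^{n-1}\cot\!\left(\tfrac{\pi m}{n}\right)\zeta^{mk}=i(n-2k)\quad(1\le k\le n-1),
\]
which vanishes for $k\equiv 0$. This should be recorded and proved in the appendix. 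It follows from $\sum_m\cot(\pi m/n)\sin(2\pi mk/n)=n-2k$, using the oddness of cotangent.

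After this expansion, $K(r,s)$ is piecewise linear in the representatives of $r$, $s$ and $r+s$. It should collapse to $\pm n$, according to whether $r+s<n$ or $r+s>n$ for the representatives in $[1,n-1]$.

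\textbf{Step 4: check the constants (main obstacle).} The main obstacle is matching this $K$ against the trigonometric factor on the right-hand side. Writing $1-\zeta^{r}=-2i\sin(\pi r/n)e^{i\pi r/n}$ turns the comparison into a check on phases and signs. The representative of $r+s$ in $[1,n-1]$ jumps by $n$ exactly when $r+s>n$. That jump flips the sign of $\sin(\pi(r+s)/n)$ relative to the branch of $\Delta_{r+s}$, which is precisely the $\pm$ in $K$.

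I will verify the two cases $r+s<n$ and $r+s>n$ separately and keep careful track of the square-root convention. Together with Proposition~\ref{prop:uniform-reduction}, this proves Theorem~\ref{thm:cyclic-CCRC}.
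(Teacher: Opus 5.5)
Your proposal is correct and follows essentially the paper's route. You reduce via Proposition~\ref{prop:uniform-reduction} to the master identity, compute $\mathcal Q$ in discrete Fourier coordinates to show it respects addition of Fourier indices, and close with the Fourier--cotangent identity of Lemma~\ref{lem:cot-fourier}. Working with $f_r$ (where $v_r=(1-\zeta^r)f_r$) only replaces the paper's three-factor sum $T_{a,b,c}$ by your two-factor sum $K(r,s)=\tfrac{i}{2}T_{r,s,c}$; the expansion gives $K=-n$ for $r+s<n$ and $K=n$ for $r+s>n$, and these signs match the phase check of Step~4 exactly.
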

By Proposition \ref{prop:uniform-reduction}, it is enough to verify the master identity \eqref{master-identity}. 
We do this using the standard Fourier realization of the \(A_{n-1}\) root system.

\subsection{The root system of type $A_{n-1}$ and Fourier coordinates}\label{RSAF}

We use the standard realization of the root system of type $A_{n-1}$;
see, for instance, \cite{Humphreys}.  As explained in
Section~\ref{sec:mckay-coordinates}, for a cyclic group the character transform
is the discrete Fourier transform.  We record the explicit Fourier conventions
here, since they will be used repeatedly in the root-sum calculation below.

Let
\[
\mathfrak h_{\mathbb C}
=
\left\{
\sum_{j=0}^{n-1} z_j \ve_j \in \mathbb C^n
\;\middle|\;
\sum_{j=0}^{n-1}z_j=0
\right\},
\]
where $\ve_0,\ldots, \ve_{n-1}$ denote the standard basis of
$\mathbb C^n$, with indices understood modulo $n$.  We equip
$\mathbb C^n$ with the standard $\mathbb C$-bilinear form
\[
\langle \ve_j, \ve_k\rangle=\delta_{jk},
\]
and use the same notation for its restriction to
$\mathfrak h_{\mathbb C}$.

With these conventions, the simple roots are
\[
\alpha_j= \ve_j - \ve_{j+1},
\qquad 1\leq j\leq n-1,
\]
and the positive roots are
\[
\ve_s - \ve_r
=
\alpha_s+\cdots+\alpha_{r-1},
\qquad 1\leq s<r\leq n.
\]

For the Fourier-theoretic argument it is convenient to enlarge the
notation cyclically.  Define
\[
\beta_j = \ve_j - \ve_{j+1},
\qquad j\in\mathbb Z/n\mathbb Z.
\]
Thus $\beta_j=\alpha_j$, for $1\leq j\leq n-1$, whereas $\beta_0=-\sum_{j=1}^{n-1}\alpha_j$.
In particular, $\sum_{j=0}^{n-1}\beta_j=0$. 
Note that this is precisely the affine family \(\{\beta_j\}\) introduced in Section~\ref{sec:mckay-coordinates}.

We shall use the standard discrete Fourier transform on the cyclic
group $\mathbb Z/n\mathbb Z$.  Set $\z=e^{2\pi i/n}$, as before. 
For a function $f:\mathbb Z/n\mathbb Z\to\mathbb C$, we use the
Fourier convention
\[
\widehat f(a)
=
\sum_{j=0}^{n-1}\z^{-aj}f(j) \, .
\]
\begin{rmk}\label{FTRT}
For every $a\in \mathbb Z/n\mathbb Z$, let $\rho_a \colon \mathbb Z/n\mathbb Z \to \C^\ast$ be the irreducible representation
$j\mapsto \z^{aj}$. Let $\chi_a$ be the character of $\rho_a$ (which in this case coincides with $\rho_a$).
Let $(\, | \, )$ be the standard Hermitian product on the space of complex-valued functions on $\mathbb Z/n\mathbb Z$
(\cite[Sec. 2.3]{Serre77}), that is
\[
(f|g) = \frac{1}{n}\sum_{j=0}^{n-1}f(j)\overline{g(j)} \, , \qquad f, g \colon \mathbb Z/n\mathbb Z \to \C \, .
\]
Then, 
\[
\widehat f(a) = n(f|\chi_a) \, , \qquad \forall a\in \Z/n\Z \, .
\] 
The orthogonality relations for the characters of $\mathbb Z/n\mathbb Z$ (\cite[Thm. 3, Sec. 2.3]{Serre77}) give
\begin{equation}\label{ORC}
n(\chi_a | \chi_0)=\sum_{j=0}^{n-1}\z^{aj}
=
n\,\delta_{a,0},
\qquad a\in\mathbb Z/n\mathbb Z \, .
\end{equation}
Moreover, by \cite[Thm. 6, Sec. 2.5]{Serre77}, $\chi_0, \ldots, \chi_{n-1}$ form an orthonormal basis 
for the space of complex-valued functions, therefore the following \textit{Fourier inversion} formula holds true:
\begin{equation}\label{FI}
f(j)
= \sum_{a=0}^{n-1}(f|\chi_a)\chi_a(j)=
\frac1n\sum_{a=0}^{n-1}\z^{aj}\widehat f(a) \, ,
\end{equation}
for every $f\colon \Z/n\Z \to \C$ and $j\in \Z/n\Z$.
\end{rmk}
Applying the discrete Fourier transform to the cyclic family
$(\beta_j)_{j\in\mathbb Z/n\mathbb Z}$, we define
\[
v_a
:=
\sum_{j=0}^{n-1}\z^{-aj}\beta_j,
\qquad a\in\mathbb Z/n\mathbb Z.
\]
Since
\[
v_0=\sum_{j=0}^{n-1}\beta_j=0,
\]
Fourier inversion becomes
\begin{equation}\label{FIb}
\beta_j
=
\frac1n\sum_{a=1}^{n-1}\z^{aj}v_a.
\end{equation}
In particular, $v_1,\ldots,v_{n-1}$ form a basis of
$\mathfrak h_{\mathbb C}$. With the McKay labelling of the affine
$A_{n-1}$-diagram, these are precisely the vectors $v_g$ of
\eqref{def:mckay-coordinate}: for $g=g_a$ one has
$\chi_j(g_a^{-1})=\z^{-aj}$, and hence $v_{g_a}=v_a$.

\begin{lmm}\label{lem:fourier-root}
 Let $1\leq a,b\leq n-1$, $s\in\mathbb Z/n\mathbb Z$,
and $1\leq m\leq n-1$.  Set
\[
\alpha_{s,m}:= \ve_s - \ve_{s+m}.
\]
Then:
\begin{enumerate}
\item[(i)]
\[
\alpha_{s,m}
=
\frac1n\sum_{c=1}^{n-1}
\z^{cs}\frac{1-\z^{cm}}{1-\z^c}\,v_c \, ;
\]

\item[(ii)]
\[
\langle v_a,\alpha_{s,m}\rangle
=
\z^{-as}(1-\z^a)(1-\z^{-am}) \, ;
\]

\item[(iii)]
\[
\langle v_a,v_b\rangle
=
\begin{cases}
n(2-\z^a-\z^{-a}),
   & a+b\equiv0\pmod n,\\[2mm]
0,&a+b\not\equiv0\pmod n.
\end{cases}
\]
In particular,
\[
\langle v_a,v_{n-a}\rangle
=
4n\sin^2(a\theta),
\qquad
\theta=\frac{\pi}{n}.
\]
\end{enumerate}
\end{lmm}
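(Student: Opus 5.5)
The three identities are direct computations from the Fourier inversion formula \eqref{FIb} and the definition of $v_a$, using the orthonormality $\langle \ve_j,\ve_k\rangle=\delta_{jk}$. I would prove (ii) first, since it is the simplest, and then deduce (iii) and (i) from it.

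For (ii), I would compute $\langle v_a,\ve_k\rangle$ directly. Since $\beta_j=\ve_j-\ve_{j+1}$, we have $\langle \beta_j,\ve_k\rangle=\delta_{j,k}-\delta_{j+1,k}$, so
\[
\langle v_a,\ve_k\rangle=\z^{-ak}-\z^{-a(k-1)}=\z^{-ak}(1-\z^{a}).
\]
Therefore
\[
\langle v_a,\alpha_{s,m}\rangle=(1-\z^a)\bigl(\z^{-as}-\z^{-a(s+m)}\bigr)=\z^{-as}(1-\z^a)(1-\z^{-am}),
\]
which is (ii).

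For (iii), I would specialize (ii) to $\beta_j=\alpha_{j,1}$, which gives $\langle v_a,\beta_j\rangle=\z^{-aj}(1-\z^a)(1-\z^{-a})$. Expanding $v_b=\sum_j\z^{-bj}\beta_j$ then yields
\[
\langle v_a,v_b\rangle=(2-\z^a-\z^{-a})\sum_{j=0}^{n-1}\z^{-(a+b)j}.
\]
By the orthogonality relation \eqref{ORC}, the sum equals $n\,\delta_{a+b,0}$. The sine form follows from $2-\z^a-\z^{-a}=2-2\cos(2a\theta)=4\sin^2(a\theta)$. This is also consistent with Proposition~\ref{prop:mckay-coordinates}(iii), where $z_g=n$ and $\chi_V(g_a)=\z^a+\z^{-a}$.

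For (i), I would write $\alpha_{s,m}$ as the telescoping sum $\sum_{\ell=0}^{m-1}\beta_{s+\ell}$. Substituting \eqref{FIb} for each $\beta_{s+\ell}$ gives
\[
\alpha_{s,m}=\frac1n\sum_{c=1}^{n-1}\z^{cs}\Bigl(\sum_{\ell=0}^{m-1}\z^{c\ell}\Bigr)v_c .
\]
Since $1\le c\le n-1$, we have $\z^c\ne1$, so the inner geometric series sums to $(1-\z^{cm})/(1-\z^c)$. This proves (i).

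There is no real obstacle here. The only care needed is with the cyclic index conventions: the indices $s+\ell$ are taken modulo $n$, so the telescoping sum is valid even when it wraps around, and one must keep track of the signs of the exponents in the Fourier convention. As a consistency check, (i) can be recovered from (ii) and (iii) by pairing against $v_{n-c}$ and dividing by $\langle v_c,v_{n-c}\rangle$.
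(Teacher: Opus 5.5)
Your proof is correct and follows the paper's argument for (i), which telescopes $\alpha_{s,m}=\beta_s+\cdots+\beta_{s+m-1}$, applies Fourier inversion \eqref{FIb} and sums the geometric series, and for (ii), which pairs each $\beta_j$ with $\ve_s-\ve_{s+m}$ and sums against $\z^{-aj}$. The only difference is in (iii): the paper specializes Proposition~\ref{prop:mckay-coordinates}(iii) using $z_{g_a}=n$ and $\chi_V(g_a)=\z^a+\z^{-a}$, whereas you derive it directly from (ii) with $m=1$ and the orthogonality relation \eqref{ORC}, which is self-contained and does not rely on the identification $v_{g_a}=v_a$.
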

\begin{proof}
Since $\alpha_{s,m} = \beta_s+\beta_{s+1}+\cdots+\beta_{s+m-1}$, 
equation \eqref{FIb} (Fourier inversion) and the fact that $v_0=0$ gives
\[
\alpha_{s,m}
=
\frac1n
\sum_{c=1}^{n-1}
\z^{cs}
(1+\z^c+\cdots+\z^{c(m-1)})v_c =
\frac{1}{n}
\sum_{c=1}^{n-1}
\z^{cs}
\frac{1-\z^{cm}}{1-\z^c}\,v_c \, ,
\]
which proves (i).

For (ii), using $\beta_j = \ve_j - \ve_{j+1}$, we obtain
\[
\langle\beta_j, \ve_s - \ve_{s+m}\rangle
=
\delta_{j,s}-\delta_{j,s-1}
-\delta_{j,s+m}+\delta_{j,s+m-1}.
\]
Multiplying by $\z^{-aj}$ and summing over $j$ gives
\[
\langle v_a,\alpha_{s,m}\rangle
=
\z^{-as}(1-\z^a)(1-\z^{-am}).
\]

Part (iii) is also the cyclic specialization of
Proposition~\ref{prop:mckay-coordinates}(iii), since $z_{g_a}=n$ and
$\chi_V(g_a)=\z^a+\z^{-a}$.
\end{proof}

The diagonal form of the Bryan--Gholampour transformation now follows directly
from Proposition~\ref{prop:mckay-coordinates}(ii).  Indeed, for $g_a\neq1$ one has
$z_{g_a}=n$ and
\[
\Delta_{g_a}=\sqrt{\chi_V(g_a)-2}
=2i\sin(a\theta)=:\Delta_a,
\]
with the Bryan--Gholampour choice of square root.  Hence
\begin{equation}\label{BGL'}
L(v_a)=n\Delta_a\fo_{[a]},
\qquad 1\leq a\leq n-1.
\end{equation}
Thus the general McKay coordinates of Section~\ref{sec:mckay-coordinates}
are precisely the discrete Fourier coordinates in type $A$.

%----------------------------------------------------------
% The specialized quantum correction
%----------------------------------------------------------

For the root
\[
\alpha_{s,m}= \ve_s - \ve_{s+m},
\]
the Bryan--Gholampour specialization gives
\[
q^{\alpha_{s,m}}=\z^m,
\qquad
\frac{1+q^{\alpha_{s,m}}}{1-q^{\alpha_{s,m}}}
=
\frac{1+\z^m}{1-\z^m}
=
i\cot(m\theta).
\]
We shall use these identities directly in the
verification of the master identity below.  Notice that the pairs
\((s,m)\), with \(s\in\mathbb Z/n\mathbb Z\) and
\(1\leq m\leq n-1\), parametrize the oriented roots.  Thus, when the
sum over positive roots in the definition of \(\mathcal Q\) is written
in these cyclic coordinates, a factor \(1/2\) is introduced.

%----------------------------------------------------------
% Proof of the main theorem
%----------------------------------------------------------

\subsection{Proof of Theorem~\ref{thm:cyclic-CCRC}}

By Proposition~\ref{prop:uniform-reduction}, it remains only to verify the
master identity \eqref{master-identity}.  Since $C_n$ is abelian, $z_{g_a}=n$
for every $a$, and its class multiplication coefficients satisfy
\[
N(g_a,g_b,g_c)=
\begin{cases}
1,&c\equiv a+b\pmod n,\\
0,&\text{otherwise}.
\end{cases}
\]
Thus, if $c\equiv a+b\pmod n$ and $c\neq0$, the required identity is
\begin{equation}\label{eq:cyclic-master}
\mathcal Q(v_a,v_b)
=
n\frac{\Delta_a\Delta_b}{\Delta_c}\,v_c.
\end{equation}
If $a+b\equiv0\pmod n$, the right-hand side of
\eqref{master-identity} is zero, so it remains to show that
$\mathcal Q(v_a,v_{n-a})=0$.

Assume first that $a+b\not\equiv0\pmod n$, and let
$c\in\{1,\ldots,n-1\}$ be determined by $c\equiv a+b\pmod n$.
Using Lemma~\ref{lem:fourier-root}(ii) and the specialization above, the quantum correction is
\[
\mathcal Q(v_a,v_b)
=
\frac{i}{2}
\sum_{s=0}^{n-1}\sum_{m=1}^{n-1}
\z^{-(a+b)s}
(1-\z^a)(1-\z^b)
(1-\z^{-am})(1-\z^{-bm})
\cot(m\theta)\,\alpha_{s,m}.
\]
Substituting the Fourier expansion of $\alpha_{s,m}$ from
Lemma~\ref{lem:fourier-root}(i) and using character orthogonality
\eqref{ORC}, only the Fourier mode $v_c$ survives.  Hence
\[
\mathcal Q(v_a,v_b)=C_{a,b}v_c,
\qquad
C_{a,b}=\frac{i}{2}
\frac{(1-\z^a)(1-\z^b)}{1-\z^c}T_{a,b,c},
\]
where $T_{a,b,c}$ is the finite Fourier--cotangent sum of
Corollary~\ref{cor:T-abc}.

Since $1-\z^r=-e^{ir\theta}\Delta_r$, we have
\[
\frac{(1-\z^a)(1-\z^b)}{1-\z^c}
=-e^{i(a+b-c)\theta}\frac{\Delta_a\Delta_b}{\Delta_c}.
\]
If $a+b<n$, then $c=a+b$ and Corollary~\ref{cor:T-abc} gives
$T_{a,b,c}=2ni$; if $a+b>n$, then $c=a+b-n$ and it gives
$T_{a,b,c}=-2ni$.  In both cases
\[
C_{a,b}=n\frac{\Delta_a\Delta_b}{\Delta_c},
\]
which proves \eqref{eq:cyclic-master}.

Finally, suppose $a+b\equiv0\pmod n$.  After substituting the Fourier
expansion of $\alpha_{s,m}$ in the quantum correction, the coefficient of
each $v_r$, $1\leq r\leq n-1$, contains
\[
\sum_{s=0}^{n-1}\z^{(r-a-b)s}
=
\sum_{s=0}^{n-1}\z^{rs}=0
\]
by \eqref{ORC}.  Therefore $\mathcal Q(v_a,v_{n-a})=0$, as required. This proves the theorem.
\qed

\section{The binary dihedral case}\label{sec:binary-dihedral}

In this section we prove the equivariant cohomological crepant
resolution conjecture for the binary dihedral group of order \(4n\),
where \(n\ge2\). The general McKay-coordinate formalism already
accounts for the identity-sector contribution and for the invertibility of the
Bryan--Gholampour transformation.  We therefore retain below only the explicit
binary-dihedral calculations needed to compare the non-trivial sectors.

Set
\[
\eta=e^{\pi i/n},
\qquad
\theta'=\frac{\pi}{2n},
\]
and let
\[
G=\cD_n
=
\left\langle
\sigma,\tau
\ \middle|\
\sigma^{2n}=1,\quad
\tau^2=\sigma^n,\quad
\tau\sigma\tau^{-1}=\sigma^{-1}
\right\rangle .
\]
We use the standard realization
\[
\sigma=
\begin{pmatrix}
\eta&0\\
0&\eta^{-1}
\end{pmatrix},
\qquad
\tau=
\begin{pmatrix}
0&-1\\
1&0
\end{pmatrix}
\]
as a subgroup of \(\mathrm{SL}_2(\mathbb C)\). Recall that $|G|=4n$. 

Let
\[
\pi \colon X\longrightarrow\mathbb C^2/G
\]
be the minimal, equivalently crepant, resolution. By the McKay
correspondence, its exceptional configuration is of type
\(D_{n+2}\).

%====================================================================
\subsection{Conjugacy classes and McKay data}\label{conjugacy_classes_Dn}
%====================================================================

The conjugacy classes of \(G\) are
\[
[0]:=\{e\},
\]
\[
[r]:=\{\sigma^r,\sigma^{-r}\},
\qquad
1\le r\le n-1,
\]
\[
[n]:=\{\sigma^n\},
\]
and
\[
[n+1]
:=
\{\tau\sigma^{2j}\mid 0\le j\le n-1\},
\]
\[
[n+2]
:=
\{\tau\sigma^{2j+1}\mid 0\le j\le n-1\}.
\]
Indeed,
\[
\tau\sigma^r\tau^{-1}=\sigma^{-r}  \qquad \mbox{and} \qquad \sigma^k(\tau\sigma^j)\sigma^{-k}= \tau\sigma^{j-2k} \, ,
\]
so conjugation preserves the parity of \(j\) in the non-cyclic
coset and acts transitively on the exponents of each fixed
parity. The class sizes are therefore (following the  order above)
\[
1,\underbrace{2,\ldots,2}_{n-1\ {\rm times}},1,n,n,
\]
whose sum is \(4n\).

For \(1\le r\le n+2\), let $\mathbf1_{[r]}$ denote the corresponding twisted-sector class, and put $\mathbf1_{[0]}$ as the trivial-sector class.

As in Proposition \ref{prop:uniform-reduction}, every non-trivial element of \(G\) has age one.

We recall the irreducible representations of the binary
dihedral group.
In this section, we use notation slightly different from that of Sections \ref{Section_introduction}--\ref{sec:mckay-coordinates}. 
We then explicitly relate it to the notation used in the McKay correspondence.

Set
\begin{equation}\label{kn}
\ka_n
=
\begin{cases}
1,& n\ \text{even},\\
i,& n\ \text{odd} \, ,
\end{cases}
\qquad \mbox{thus} \qquad \ka_n^2 =(-1)^{n}\, .
\end{equation}
There are four one-dimensional representations
\[
Q_0, Q_1, Q_2, Q_3
\]
given by
\[
\begin{array}{c|cc}
 &\sigma&\tau\\
\hline
Q_0&1&1\\
Q_1&1&-1\\
Q_2&-1&\ka_n\\
Q_3&-1&-\ka_n
\end{array}
\]
and \(n-1\) two-dimensional representations
\[
R_k,
\qquad
1\le k\le n-1,
\]
defined by
\[
R_k(\sigma)
=
\begin{pmatrix}
\eta^{k}&0\\
0&\eta^{-k}
\end{pmatrix},
\qquad
R_k(\tau)
=
\begin{pmatrix}
0&(-1)^{k}\\
1&0
\end{pmatrix}.
\]
Their characters satisfy
\[
\chi_{R_k}(\sigma^r)
=
2\cos\left(\frac{\pi kr}{n}\right),
\qquad
\chi_{R_k}(\tau\sigma^r)=0.
\]
For \(1\le k\le n-1\), the two eigenvalues \(\eta^k\) and \(\eta^{-k}\) of \(R_k(\sigma)\) are distinct, 
while \(R_k(\tau)\) exchanges the corresponding eigenspaces. 
Hence \(R_k\) is irreducible. Moreover, the \(R_k\)'s are pairwise non-isomorphic, 
since \(\chi_{R_k}(\sigma)=2\cos(\pi k/n)\), and the four one-dimensional representations \(Q_0,\ldots,Q_3\) are clearly distinct. 
Since $ 4\cdot1^2+(n-1)\cdot2^2=4n=|G|$,
these representations form the complete list of irreducible representations of \(G\).

The natural two-dimensional representation $V$ is \(R_1\).
To determine the precise McKay labelling, set
\[
R_0:=Q_0 \oplus Q_1,
\qquad
R_n:= Q_2\oplus Q_3.
\]
For \(1\le k\le n-1\), the standard product-to-sum identity
gives, on the elements \(\sigma^r\),
\[
\chi_{R_1}(\sigma^r)
\chi_{R_k}(\sigma^r)
=
\chi_{R_{k-1}}(\sigma^r)
+
\chi_{R_{k+1}}(\sigma^r).
\]
On the elements \(\tau\sigma^r\), both sides vanish. Hence
\[
R_1\otimes R_k
\cong
R_{k-1}\oplus R_{k+1},
\qquad
1\le k\le n-1.
\]
The cases \(k=1,n-1\) are interpreted using
\( R_0, R_n\) above. Similarly,
\[
R_1\otimes Q_0\cong R_1,
\qquad
R_1\otimes Q_1\cong R_1,
\]
and
\[
R_1\otimes Q_2\cong R_{n-1},
\qquad
R_1\otimes Q_3\cong R_{n-1}.
\]

Thus, after removing the trivial representation \(Q_0\),
the finite McKay graph is the Dynkin diagram \(D_{n+2}\).
The McKay correspondence yields the following identification:
\[
Q_1\longleftrightarrow\alpha_1,
\]
\[
R_{a-1}\longleftrightarrow\alpha_a,
\qquad
2\le a\le n,
\]
and
\[
Q_2\longleftrightarrow\alpha_{n+1},
\qquad
Q_3\longleftrightarrow\alpha_{n+2}.
\]
Hence
\[
d_1=d_{n+1}=d_{n+2}=1,
\qquad
d_a=2,
\quad
2\le a\le n.
\]
This description also applies when \(n=2\), in which case the
finite diagram is \(D_4\).

We use the standard realization of the root system of type
\(D_{n+2}\); see, for instance, \cite{Humphreys}. Let
\[
\varepsilon_1,\ldots,\varepsilon_{n+2}
\]
be the standard basis of $\C^{n+2}$, equipped with the standard bilinear form $\langle \ve_i , \ve_j \rangle = \delta_{ij}$.
Put
\[
\alpha_a=\varepsilon_a-\varepsilon_{a+1},
\qquad
1\le a\le n,
\]
\[
\alpha_{n+1}
=
\varepsilon_{n+1}-\varepsilon_{n+2},
\qquad
\alpha_{n+2}
=
\varepsilon_{n+1}+\varepsilon_{n+2}.
\]
The positive roots are
\[
R^+
=
\left\{
\varepsilon_a-\varepsilon_b,\,
\varepsilon_a+\varepsilon_b
\ \middle|\
1\le a<b\le n+2
\right\}.
\]

In this case, the specialization \eqref{BGq} is
\[
q_j
=
\exp\left(\frac{2\pi i d_j}{4n}\right),
\qquad
1\le j\le n+2.
\]

\begin{thm}\label{thm:D-CCRC}
Let
\[
\pi \colon X\longrightarrow\mathbb C^2/\cD_n
\]
be the minimal resolution. After the Bryan--Gholampour
specialization
\[
q_j
=
\exp\left(\frac{2\pi i d_j}{4n}\right),
\qquad
1\le j\le n+2,
\]
where
\[
d_1=d_{n+1}=d_{n+2}=1,
\qquad
d_j=2
\quad(2\le j\le n),
\]
the Bryan--Gholampour change of variables induces an
isomorphism of \(\mathbb C[t]\)-algebras
\[
L:
QH_{\mathbb C^*}^*(X)_{\pi}
\longrightarrow
\left(
H_{\mathbb C^*,\mathrm{CR}}^*
([\mathbb C^2/\cD_n]),
\cup_{\mathbb C^*,\mathrm{CR}}
\right).
\]
\end{thm}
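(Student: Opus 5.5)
By Proposition~\ref{prop:uniform-reduction}, the identity-sector comparison and the invertibility of \(L\) are already settled. It therefore suffices to verify the master identity \eqref{master-identity} for all pairs of non-trivial classes \([g],[h]\) among \([1],\dots,[n+2]\). The plan is to make the McKay coordinate vectors \(v_g\) explicit in the orthonormal basis \(\ve_1,\dots,\ve_{n+2}\), and then compute \(\mathcal Q(v_g,v_h)\) root by root.

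\textbf{Step 1: McKay coordinate vectors.} Using \eqref{def:mckay-coordinate}, \(v_g=\sum_a(\chi_a(g^{-1})-d_a)\alpha_a\), together with the character table above, we rewrite each \(v_g\) in the \(\ve\)-basis.
\begin{itemize}
\item For the cyclic classes \([r]\), \(1\le r\le n\), the characters are \(\chi_{R_k}(\sigma^r)=2\cos(\pi kr/n)\), \(\chi_{Q_1}=1\) and \(\chi_{Q_{2,3}}=(-1)^r\). Summing the telescoping \(\alpha_a=\ve_a-\ve_{a+1}\) should give coordinates \(\langle v_{[r]},\ve_a\rangle\) proportional to \(\sin\) of an angle of the form \((2a-1)r\theta'\), up to a common factor involving \(\sin(r\theta')\). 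The last two coordinates should come out with a parity factor \((-1)^r\). This is the ``finite sine transform'' realization.
\item For \([n+1]\) and \([n+2]\), only \(Q_0,\dots,Q_3\) contribute. Hence \(v_{[n+1]},v_{[n+2]}\) are supported essentially on \(\ve_{n+1},\ve_{n+2}\) (plus a uniform part from \(d_a\)). We compute them directly, keeping track of \(\ka_n\).
\end{itemize}
As a sanity check, these vectors must reproduce Proposition~\ref{prop:mckay-coordinates}(iii).

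\textbf{Step 2: specialized root data.} For each positive root \(\ve_a\mp\ve_b\), we compute \(q^\beta\). Writing \(\beta=\sum b_j\alpha_j\) gives \(\sum d_jb_j\), so \(q^{\ve_a-\ve_b}=\exp(2\pi i\,\cdot\,\text{(linear in }a,b))\). In the orthonormal basis this amounts to assigning to \(\ve_a\) a ``weight'' \(w_a\) so that \(q^{\ve_a\pm\ve_b}=e^{2\pi i(w_a\pm w_b)}\), with \(w_a\) essentially \((n+1-a)/(2n)\) up to a shift, and \(w_{n+2}=0\). Then \(\frac{1+q^\beta}{1-q^\beta}=i\cot(\pi(w_a\pm w_b))\). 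This is what produces angles that are multiples of \(\theta'=\pi/2n\), consistent with the sine transforms of Step 1.

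\textbf{Step 3: the master identity, case by case.} We need \(\mathcal Q(v_g,v_h)\) expanded in the \(v_k\), with \(N(g,h,k)\) read off from the group law. The cases are:
\begin{itemize}
\item \([r]\cdot[s]=[r+s]+[|r-s|]\), suitably folded;
\item \([r]\cdot[n+1]\) equals \([n+1]\) or \([n+2]\) according to the parity of \(r\), with multiplicity 2;
\item \([n+1]^2\) and \([n+1][n+2]\), which are combinations of cyclic classes.
\end{itemize}
For the cyclic–cyclic case we will mimic the type \(A\) argument. We extend the coordinates of \(v_{[r]}\) antisymmetrically to a cyclic index set \(\Z/2n\Z\) (or \(\Z/4n\)), so that roots \(\ve_a\pm\ve_b\) become differences of the form \(e_s-e_{s+m}\) in a doubled \(A_{2n-1}\)-type picture. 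Then \(\langle v,\beta\rangle\) factors as a product of exponentials, and summing over \(s\) gives Fourier orthogonality, which selects the modes \(r\pm s\). What remains is a finite sine–cotangent sum, to be evaluated with the cotangent identities of the appendix (the analogue of Corollary~\ref{cor:T-abc}). The mixed cases involving \([n+1]\) and \([n+2]\) have few nonzero pairings with roots and should reduce to short explicit cotangent sums. Alternatively, they can be handled via the symmetric trilinear form from the remark after Proposition~\ref{prop:uniform-reduction}, pairing with \(v_k\) and comparing with \(|G|\Delta_g\Delta_h\Delta_k\sum_\chi\chi(g)\chi(h)\chi(k)/\chi(e)\). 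That formulation is symmetric, so we only need the triples up to permutation.

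\textbf{Main obstacle.} The main difficulty is the cyclic–cyclic evaluation of Step 3. One has to check that, after folding \(D_{n+2}\) into the doubled cyclic picture, the cotangent weights of \(\ve_a+\ve_b\) and \(\ve_a-\ve_b\) combine into a single periodic cotangent kernel \(\cot(m\theta')\). One must also handle correctly the boundary coordinates \(\ve_{n+1},\ve_{n+2}\), where \(w_{n+2}=0\) and the parity factor \((-1)^r\) enters, as well as the edge cases \(r=s\), \(r+s=n\) and \(r=n\). The approach is to first check the resulting sum identity for \(n=2,3\) by hand, and then prove it in general by the same residue or partial-fraction argument used for the type \(A\) Fourier–cotangent identity.
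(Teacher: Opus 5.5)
Your route, verifying \eqref{master-identity} directly in McKay coordinates, differs from the paper's. The paper checks multiplicativity on the orthonormal basis $\varepsilon_a$, where $\mathcal Q(\varepsilon_a,\varepsilon_b)$ involves only $\varepsilon_a\pm\varepsilon_b$ for $a\neq b$, and does the Fourier analysis on the Chen--Ruan side through the sine modes $V_m\in\mathcal R$. Your route can be made to work, but as written it has a genuine gap. Once Proposition~\ref{prop:uniform-reduction} is granted, the product comparison is the entire content of the theorem, and you never carry it out in any case. Moreover, the mechanism you propose for the cyclic--cyclic case fails as stated.

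Write $f_j=\varepsilon_{n+1-j}$ for $0\le j\le n-1$ and $f_{-1-j}=-f_j$, with $j\in\Z/2n\Z$. Then $h(f_j)\equiv 2j+1\pmod{4n}$ by Lemma~\ref{lem:D-height}. The $D_{n+2}$-roots supported on $\varepsilon_2,\ldots,\varepsilon_{n+1}$ are the vectors $f_j-f_{j+m}$ with $j+m\not\equiv -1-j$. The vectors $f_j-f_{-1-j}=\pm2\varepsilon_a$ are not roots, so the middle-root sum is not translation invariant on $\Z/2n\Z$. Consequently, summing over $j$ does not select the modes $r\pm s$. Concretely, for $n=3$ the roots $\varepsilon_a\pm\varepsilon_b$ with $2\le a<b\le 4$ contribute $8\sqrt3\,i\,(v_\sigma-v_{\sigma^3})$ to $\mathcal Q(v_{\sigma^2},v_{\sigma^3})$, although $N(\sigma^2,\sigma^3,\sigma^3)=0$. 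The full value is $24\sqrt3\,i\,v_\sigma$.

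The missing idea is that the roots through the two boundary coordinates compensate exactly for the missing long vectors. These boundary coordinates are $\varepsilon_1$ and $\varepsilon_{n+2}$, not $\varepsilon_{n+1}$ and $\varepsilon_{n+2}$.

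Take $x,y\in\operatorname{Span}\{\varepsilon_2,\ldots,\varepsilon_{n+1}\}$. Using \eqref{DefF} with $h=\lambda_1\pm\lambda_a$ and $h=\lambda_a$, the roots $\varepsilon_1\pm\varepsilon_a$ and $\varepsilon_a\pm\varepsilon_{n+2}$ contribute $4i\cot(\lambda_a\theta')\langle x,\varepsilon_a\rangle\langle y,\varepsilon_a\rangle\varepsilon_a$. Their $\varepsilon_1$- and $\varepsilon_{n+2}$-components cancel. This is precisely what the absent pairs $f_j-f_{-1-j}$ would contribute, so
\[
\mathcal Q(x,y)=\frac14\sum_{j\in\Z/2n\Z}\sum_{m=1}^{2n-1}\langle x,f_j-f_{j+m}\rangle\langle y,f_j-f_{j+m}\rangle F(-2m)\,(f_j-f_{j+m}),
\]
which is translation invariant with kernel $F(-2m)=-i\cot(m\theta')$. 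Only after this does Fourier orthogonality select the modes $\pm r\pm s$. The coefficients then reduce to sums as in Lemma~\ref{lem:cot-fourier} with $2n$ in place of $n$.

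Your Step 1 data also need correcting:
\begin{itemize}
\item $v_{\sigma^r}$ ($1\le r\le n$) has no $\varepsilon_1$- or $\varepsilon_{n+2}$-component, and $\langle v_{\sigma^r},\varepsilon_a\rangle=-4\sin(r\theta')\sin((2a-3)r\theta')$.
\item $v_\tau=-2(\varepsilon_1+\kappa_n^{-1}\varepsilon_{n+2})$ and $v_{\tau\sigma}=-2(\varepsilon_1-\kappa_n^{-1}\varepsilon_{n+2})$; these are not supported near $\varepsilon_{n+1}$.
\item The weights are $\lambda_1=2n$, odd $\lambda_a=2n-2a+3$ for $2\le a\le n+1$, and $\lambda_{n+2}=0$.
\end{itemize}

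Finally, the mixed and non-cyclic cases do reduce to short sums. However, they hinge on the identities $\sum_{m\in I_n}\cot(m\theta'/2)\sin(mr\theta')=n$ and $\sum_{m\in I_n}\tan(m\theta'/2)\sin(mr\theta')=-(-1)^r n$, which are Lemma~\ref{lem:D-Fourier-identities}(iv). You neither state nor prove them.
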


%====================================================================
\subsection{Weighted heights and the specialized quantum correction}
%====================================================================

Define the weighted height to be the linear map
\[
h:
\operatorname{Span}_{\mathbb C}
\{\alpha_1,\ldots,\alpha_{n+2}\}
\longrightarrow\mathbb C
\]
such that
\[
h(\alpha_1)
=
h(\alpha_{n+1})
=
h(\alpha_{n+2})
=
1\, , \qquad \mbox{and} \qquad h(\alpha_a)=2 \, , \qquad 2\le a\le n \, .
\]
Thus, if $\beta=\sum_{j=1}^{n+2}b_j\alpha_j$, then $h(\beta) = \sum_{j=1}^{n+2}d_jb_j$ and, 
at the specialization of Theorem~\ref{thm:D-CCRC},
\[
q^\beta
=
\exp\left(\frac{\pi i\,h(\beta)}{2n}\right).
\]

Let
\begin{equation}\label{DefF}
F(m)
:=
\frac{1+\exp(\pi i m/(2n))}
     {1-\exp(\pi i m/(2n))}
=
i\cot\left(\frac{m\theta'}{2}\right),
\qquad
m\not\equiv0\pmod{4n} \, ,
\end{equation}
where, as defined at the beginning of the section, $\theta'=\frac{\pi}{2n}$.
We shall use the following identities:
\[
F(-m)=-F(m),
\qquad
F(4n-m)=-F(m), \qquad
F(2n)=0,
\]
and
\[
F(2n-m)
=
i\tan\left(\frac{m\theta'}{2}\right),
\qquad
1\le m\le2n-1.
\]

\begin{lmm}\label{lem:D-height}
Set
\[
\lambda_1=2n,
\qquad
\lambda_a=2n-2a+3
\quad(2\le a\le n+1),
\qquad
\lambda_{n+2}=0.
\]
Then
\[
h(\varepsilon_a-\varepsilon_b)
=
\lambda_a-\lambda_b \qquad \mbox{and} \qquad h(\varepsilon_a+\varepsilon_b) = \lambda_a+\lambda_b \, , 
\qquad \mbox{for} \quad 1\le a<b\le n+2 \, .
\]
\end{lmm}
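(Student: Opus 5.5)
The map $h$ is linear and $\beta\mapsto\lambda$-expressions are linear in the $\varepsilon_a$, so I will first find a linear functional $\ell$ on $\mathbb C^{n+2}$ that agrees with $h$ on the simple roots, and then read off its values on all positive roots. Concretely, I would set $\ell(\varepsilon_a):=\lambda_a$ and extend linearly. Since the simple roots $\alpha_1,\dots,\alpha_{n+2}$ form a basis of $\mathbb C^{n+2}$ (type $D_{n+2}$ has rank $n+2$), the functional $h$ extends uniquely to all of $\mathbb C^{n+2}$. It therefore suffices to check $\ell(\alpha_j)=h(\alpha_j)$ for every $j$; then $h=\ell$, and the two claimed formulas follow at once from linearity: $h(\varepsilon_a\mp\varepsilon_b)=\lambda_a\mp\lambda_b$.

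The verification on simple roots is a short case check. For $\alpha_1=\varepsilon_1-\varepsilon_2$ we get $\lambda_1-\lambda_2=2n-(2n-1)=1=d_1$. For $2\le a\le n$, $\alpha_a=\varepsilon_a-\varepsilon_{a+1}$ gives $\lambda_a-\lambda_{a+1}=(2n-2a+3)-(2n-2a+1)=2=d_a$. Here I must make sure the index $a+1$ stays in the range $2\le a+1\le n+1$ where the generic formula for $\lambda$ holds, which it does. For $\alpha_{n+1}=\varepsilon_{n+1}-\varepsilon_{n+2}$ we get $\lambda_{n+1}-\lambda_{n+2}=(2n-2(n+1)+3)-0=1$. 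For $\alpha_{n+2}=\varepsilon_{n+1}+\varepsilon_{n+2}$ we get $\lambda_{n+1}+\lambda_{n+2}=1$. These agree with $h(\alpha_1)=h(\alpha_{n+1})=h(\alpha_{n+2})=1$ and $h(\alpha_a)=2$.

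There is no real obstacle. The only points needing care are the boundary indices: $\lambda_1$ is defined separately, and the generic formula is used only for $2\le a\le n+1$. I would also note, for later use, the sanity check that the highest root $\varepsilon_1+\varepsilon_2$ has height $\lambda_1+\lambda_2=4n-1=|G|-1$, matching the bound $\sum_i d_i^2=|G|-1$ from Section~\ref{sec:mckay-coordinates}.
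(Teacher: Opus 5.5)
Your proposal is correct and is essentially the paper's proof: define the linear functional $\varepsilon_a\mapsto\lambda_a$, check that it agrees with $h$ on the simple roots $\alpha_1,\ldots,\alpha_{n+2}$, conclude that it equals $h$ because the simple roots form a basis, and read off the values on $\varepsilon_a\pm\varepsilon_b$. Your boundary checks for $a=1$, $2\le a\le n$, $n+1$ and $n+2$ are all accurate, as is your side remark that the highest root has height $4n-1$.
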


\begin{proof}
Let \(\lambda\) be the linear map determined by $\lambda(\varepsilon_a)=\lambda_a$, for $a=1, \ldots , n+2$.
Then
\[
\lambda(\alpha_1)=1 \, ,
\qquad
\lambda(\alpha_a)=2
\quad(2\le a\le n) \, , \qquad \mbox{and} \qquad \lambda(\alpha_{n+1}) = \lambda(\alpha_{n+2}) = 1 \, .
\]
Hence \(\lambda=h\), since the simple roots form a basis.
The asserted formulas follow.
\end{proof}

Note that, in the notation of \eqref{def:QBG}, the specialized quantum correction is
\begin{equation}\label{eq:D-Q}
\mathcal Q(x,y)
=
\sum_{\beta\in R^+}
\langle x,\beta\rangle
\langle y,\beta\rangle
F(h(\beta))\,\beta .
\end{equation}

As shown by the general argument in the proof of Proposition~\ref{prop:uniform-reduction}, 
the identity-sector contributions are automatically compatible. 
Thus only the non-trivial-sector contribution of \eqref{eq:D-Q}  has to be checked below.

%====================================================================
\subsection{The orbifold product in compact form}
%====================================================================

We now use the presentation of the equivariant Chen--Ruan product
established in Section~\ref{Section_ECRSympl} to give a convenient description of the
orbifold product for $\cD_n$. The cyclic subgroup $\langle\sigma\rangle$
provides the main algebraic model, while the two remaining conjugacy
classes are treated separately. Since every non-trivial element of
$\cD_n$ has age $1$, Corollary~\ref{UTCRsympl} shows that the Chen--Ruan product
is determined by the corresponding class-sum products in the center
of the group algebra, together with the appropriate powers of $t$.

Put
\begin{equation}\label{B+-}
B_+
:=
\mathbf1_{[n+1]}+\mathbf1_{[n+2]},
\qquad
B_-
:=
\mathbf1_{[n+1]}-\mathbf1_{[n+2]} \in H_{\mathbb C^*,\mathrm{CR}}^* ([\mathbb C^2/\cD_n]) \, .
\end{equation}

Let
\[
\mathcal R
=
\mathbb C[z,z^{-1}]/(z^{2n}-1) \cong \C[\langle \sigma \rangle] \, , \qquad z \longleftrightarrow \sigma \, ,
\]
and let
\[
\iota:\mathcal R\longrightarrow \mathcal R,
\qquad
\iota(z)=z^{-1}.
\]
Write
\[
\mathcal R^\iota
=
\{f\in \mathcal R\mid \iota(f)=f\}.
\]
For \(1\le r\le n-1\), set
\[
E_r=z^r+z^{-r},
\qquad
E_n=z^n \in \cR \, .
\]
Then $1,E_1,\ldots,E_n$ is a basis of \(\mathcal R^\iota\).

Consider the subspaces 
\[
\operatorname{Span}_{\mathbb C}\{t,\mathbf1_{[1]},\ldots,\mathbf1_{[n]}\} \, , 
\operatorname{Span}_{\mathbb C} \{t^2,t\mathbf1_{[1]},\ldots,t\mathbf1_{[n]}\} \subseteq  
H_{\mathbb C^*,\mathrm{CR}}^* ([\mathbb C^2/\cD_n]) \, .
\] 
Define linear maps
\[
\Psi:
\operatorname{Span}_{\mathbb C}
\{t,\mathbf1_{[1]},\ldots,\mathbf1_{[n]}\}
\longrightarrow
\mathcal R^\iota
\]
by
\[
\Psi(t)=1,
\qquad
\Psi(\mathbf1_{[r]})=E_r, \qquad \mbox{for} \qquad r=1, \ldots , n \, ,
\]
and
\[
\widehat\Psi:
\operatorname{Span}_{\mathbb C}
\{t^2,t\mathbf1_{[1]},\ldots,t\mathbf1_{[n]}\}
\longrightarrow
\mathcal R^\iota
\]
by
\[
\widehat\Psi(t^2)=1,
\qquad
\widehat\Psi(t\mathbf1_{[r]})=E_r \qquad \mbox{for} \qquad r=1, \ldots , n \, .
\]
Since \(1,E_1,\ldots,E_n\) is a basis of \(\mathcal R^\iota\), both
\(\Psi\) and \(\widehat\Psi\) are linear isomorphisms.

\begin{lmm}\label{lem:D-CR-compact}
For
\[
x,y\in
\operatorname{Span}_{\mathbb C}
\{t,\mathbf1_{[1]},\ldots,\mathbf1_{[n]}\},
\]
one has
\begin{equation}\label{eq:D-cyclic-product}
\widehat\Psi
\left(
x\cup_{\mathbb C^*,\mathrm{CR}}y
\right)
=
\Psi(x)\Psi(y).
\end{equation}
Moreover,
\begin{equation}\label{eq:D-B-action}
B_+\cup_{\mathbb C^*,\mathrm{CR}}x
=
\Psi(x)(1)\,tB_+,
\end{equation}
\begin{equation}\label{eq:D-X-action}
B_-\cup_{\mathbb C^*,\mathrm{CR}}x
=
\Psi(x)(-1)\,tB_-,
\end{equation}
where $\Psi(x)(\pm1) \in \C$ is the evaluation of $\Psi(x)$ at $\pm 1$,
\begin{equation}\label{eq:D-BX}
B_+\cup_{\mathbb C^*,\mathrm{CR}}B_-=0,
\end{equation}
and
\begin{equation}\label{eq:D-B-square}
B_+\cup_{\mathbb C^*,\mathrm{CR}}B_+
=
2n
\left(
t^2+
t\sum_{r=1}^n\mathbf1_{[r]}
\right),
\end{equation}
\begin{equation}\label{eq:D-X-square}
B_-\cup_{\mathbb C^*,\mathrm{CR}}B_-
=
2n(-1)^n
\left(
t^2+
t\sum_{r=1}^n(-1)^r\mathbf1_{[r]}
\right).
\end{equation}
\end{lmm}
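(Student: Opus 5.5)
The plan is to reduce every identity in the lemma to multiplication of class sums in $Z(\C[\cD_n])$, using Corollary~\ref{UTCRsympl}. Every non-trivial element has age $1$, so for non-trivial classes $[g],[h]$ we have
\[
\fo_{[g]}\cup_{\C^*,\mathrm{CR}}\fo_{[h]}=N(g,h,e)\,t^2\fo_{[e]}+t\sum_{[k]\neq[e]}N(g,h,k)\fo_{[k]},
\]
while $t$ and $\fo_{[e]}=1$ act by ordinary multiplication. Under the identification $\fo_{[e]}\leftrightarrow 1$, this says the Chen--Ruan product is the class-sum product in the centre, with the identity component multiplied by $t^2$ and all other components multiplied by $t$. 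This is exactly the bookkeeping encoded by $\Psi$ and $\widehat\Psi$, where $t$ plays the role of a degree-one unit.

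For \eqref{eq:D-cyclic-product}, note that the class sums $\fo_{[r]}$ with $1\le r\le n$ correspond, inside the subalgebra $\C[\langle\sigma\rangle]\subset\C[\cD_n]$, to $\sigma^r+\sigma^{-r}$ and $\sigma^n$. Under $z\leftrightarrow\sigma$ these are $E_r$ and $E_n$, and the identity class corresponds to $1$. The centre of $\C[\langle\sigma\rangle]$ intersected with $Z(\C[\cD_n])$ is $\mathcal R^\iota$, which is closed under multiplication. So the class-sum product of two such classes is their product in $\mathcal R$, re-expanded in the basis $1,E_1,\dots,E_n$. Checking the four cases of bilinear inputs ($t\cdot t$, $t\cdot\fo_{[r]}$, $\fo_{[r]}\cdot t$, $\fo_{[r]}\cdot\fo_{[s]}$) against the degree conventions gives $\widehat\Psi(x\cup y)=\Psi(x)\Psi(y)$. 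In particular, in $\fo_{[r]}\cup\fo_{[s]}$ the $t^2$ on the identity component matches $\widehat\Psi(t^2)=1$.

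For the non-cyclic classes, I would compute in the group algebra with $S_\pm:=\sum_j \tau\sigma^{2j}\pm\sum_j\tau\sigma^{2j+1}=\sum_m(\pm1)^m\tau\sigma^m$. One has $\sigma^r S_\pm=(\pm1)^r S_\pm$, since left multiplication by $\sigma^r$ sends $\tau\sigma^m$ to $\tau\sigma^{m-r}$. Hence for $f\in\mathcal R^\iota$ we get $f(\sigma)S_\pm=f(\pm1)S_\pm$; because $f(\pm1)$ is an evaluation at a scalar, no identity component can arise. This yields \eqref{eq:D-B-action} and \eqref{eq:D-X-action}, each with a single factor of $t$; the case $x=t$ is immediate.

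The products $S_\epsilon S_{\epsilon'}$ are the main computation, and the place I expect sign care with $\tau^2=\sigma^n$ to matter. Using $\tau\sigma^m\tau\sigma^{m'}=\tau^2\sigma^{m'-m}=\sigma^{n+m'-m}$, we get
\[
S_\epsilon S_{\epsilon'}=\sum_{m,m'}\epsilon^m\epsilon'^{m'}\sigma^{n+m'-m}.
\]
Substituting $k=m'-m$, the inner sum over $m$ is $\sum_m(\epsilon\epsilon')^m$. This equals $0$ if $\epsilon\ne\epsilon'$, giving \eqref{eq:D-BX}, and equals $2n$ if $\epsilon=\epsilon'$. In the latter case we obtain $2n\sum_k\epsilon^k\sigma^{n+k}=2n\,\epsilon^{n}\sum_l\epsilon^l\sigma^l$ after reindexing with $\epsilon^{-n}=\epsilon^n$. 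Finally, $\sum_{l=0}^{2n-1}\epsilon^l\sigma^l=1+\sum_{r=1}^{n}\epsilon^r\fo_{[r]}$, using $\epsilon^{-r}=\epsilon^r$. Converting back with $t^2$ on the identity component and $t$ on the others gives \eqref{eq:D-B-square} and \eqref{eq:D-X-square}.
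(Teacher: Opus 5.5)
Your proposal is correct and follows essentially the same route as the paper. Both arguments reduce via Corollary~\ref{UTCRsympl} to class-sum multiplication in $Z(\C[\cD_n])$, with the identity component weighted by $t^2$ and all others by $t$. Both identify the cyclic class sums with $E_r\in\mathcal R^\iota$, let $\sigma^r$ act on the two non-cyclic sums by $(\pm1)^r$, and obtain the squares and the mixed product from $(\tau\sigma^m)(\tau\sigma^{m'})=\sigma^{n+m'-m}$. The differences are only cosmetic: you multiply by $\sigma^r$ on the left rather than on the right, which is immaterial since the sums are central, and you treat $S_\epsilon S_{\epsilon'}$ uniformly in $\epsilon,\epsilon'\in\{\pm1\}$ rather than case by case.
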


\begin{proof}
By Corollary~\ref{UTCRsympl}, it suffices to compute the
corresponding class-sum products in $Z(\C[\cD_n])$, keeping track of
the powers of $t$ determined by the ages.

Identify the group algebra of $\langle\sigma\rangle$ with $\mathcal R$
via $\sigma\leftrightarrow z$. The class sums supported on
$\langle\sigma\rangle$ are
\[
 \sigma^r+\sigma^{-r},\qquad 1\leq r<n,
 \qquad\text{and}\qquad
 \sigma^n,
\]
and they correspond respectively to $E_r$ and $E_n$.
Hence their products correspond to multiplication in
$\mathcal R^\iota$. Since every non-trivial element of $\cD_n$ has
age $1$, Corollary~\ref{UTCRsympl} shows that, in the
corresponding Chen--Ruan product, the identity component acquires
a factor $t^2$, whereas every non-identity component acquires a
factor $t$. Since
\[
 \widehat\Psi(t^2)=1,\qquad
 \widehat\Psi(t\fo_{[r]})=E_r,
\]
and $\Psi(t)=1$, it follows by bilinearity that
\[
 \widehat\Psi(x\cup_{\C^*,CR}y)=\Psi(x)\Psi(y),
\]
which proves~\eqref{eq:D-cyclic-product}.

We now consider the two non-cyclic conjugacy classes.
At the level of group-algebra class sums, $B_+$ and $B_-$
correspond respectively to
\[
 \sum_{j=0}^{2n-1}\tau\sigma^j,
 \qquad
 \sum_{j=0}^{2n-1}(-1)^j\tau\sigma^j.
\]
Right multiplication by $\sigma^r$ gives
\[
 \left(\sum_{j=0}^{2n-1}\tau\sigma^j\right)\sigma^r
 =
 \sum_{j=0}^{2n-1}\tau\sigma^j
\]
and
\[
 \left(\sum_{j=0}^{2n-1}(-1)^j\tau\sigma^j\right)\sigma^r
 =
 (-1)^r\sum_{j=0}^{2n-1}(-1)^j\tau\sigma^j.
\]
Therefore, for $1\leq r<n$, multiplication by the cyclic class sum
corresponding to $E_r$ acts on these two class sums by the scalars
$E_r(1)=2$, $E_r(-1)=2(-1)^r$, respectively. For $r=n$, the same statement holds with
$E_n(1)=1$, $E_n(-1)=(-1)^n$, since $E_n=z^n$.  Applying Corollary~\ref{UTCRsympl}, and using
$\Psi(t)=1$, gives
\[
 B_+\cup_{\C^*,CR}x=\Psi(x)(1)tB_+,
 \qquad
 B_-\cup_{\C^*,CR}x=\Psi(x)(-1)tB_-,
\]
proving~\eqref{eq:D-B-action} and
\eqref{eq:D-X-action}.

Finally, the defining relations of $\cD_n$ give $(\tau\sigma^j)(\tau\sigma^k)=\sigma^{n-j+k}$.
Consequently,
\[
 \left(\sum_{j=0}^{2n-1}\tau\sigma^j\right)
 \left(\sum_{k=0}^{2n-1}(-1)^k\tau\sigma^k\right)=0,
\]
whereas
\[
 \left(\sum_{j=0}^{2n-1}\tau\sigma^j\right)^2
 =
 2n\sum_{r=0}^{2n-1}\sigma^r
 =
 2n\left(
  1+\sum_{r=1}^{n-1}(\sigma^r+\sigma^{-r})+\sigma^n
 \right),
\]
and
\[
 \left(\sum_{j=0}^{2n-1}(-1)^j\tau\sigma^j\right)^2
 =
 2n(-1)^n\sum_{r=0}^{2n-1}(-1)^r\sigma^r
\]
\[
 =
 2n(-1)^n\left(
  1+\sum_{r=1}^{n-1}(-1)^r(\sigma^r+\sigma^{-r})
  +(-1)^n\sigma^n
 \right).
\]
Applying Corollary~\ref{UTCRsympl} once more, the identity
component acquires a factor $t^2$ and each non-trivial component
a factor $t$. Therefore
\[
 B_+\cup_{\C^*,CR}B_-=0,
\]
\[
 B_+\cup_{\C^*,CR}B_+
 =
 2n\left(t^2+t\sum_{r=1}^n\fo_{[r]}\right),
\]
and
\[
 B_-\cup_{\C^*,CR}B_-
 =
 2n(-1)^n
 \left(t^2+t\sum_{r=1}^n(-1)^r\fo_{[r]}\right).
\]
This proves~\eqref{eq:D-BX}--\eqref{eq:D-X-square}.
\end{proof}

%====================================================================
\subsection{The Bryan--Gholampour change of variables}
%====================================================================

Rather than working directly in the McKay basis \(v_g\) of Section \ref{sec:mckay-coordinates}, 
we perform the root-system computation in the orthonormal basis \(\varepsilon_1,\ldots,\varepsilon_{n+2}\). 
This is advantageous because the root sum defining \(\mathcal Q\) is sparse in these coordinates. 
The McKay-coordinate transform is nevertheless present explicitly: under \(L\), 
the coordinates \(\varepsilon_2,\ldots,\varepsilon_{n+1}\) become the finite sine modes \(W_m\), 
while \(\varepsilon_1\) and \(\varepsilon_{n+2}\) give the two combinations \(B_\pm\) of the non-cyclic sectors. 
Thus the calculation below may be viewed as performing the quantum correction in root coordinates 
and the Chen–Ruan product in the corresponding sine coordinates.

Let us rename the representatives of conjugacy classes of $\cD_n$ as follows:
\[
g_r=\sigma^r,
\qquad
1\le r\le n,
\qquad
g_{n+1}=\tau,
\qquad
g_{n+2}=\tau\sigma.
\]
Note that, under the notation of subsection \ref{conjugacy_classes_Dn}, $[g_r]=[r]$, for every $r=1, \ldots , n+2$.
Recall that the natural representation $V=R_1$. Then
\[
\chi_V(\sigma^r)=2\cos(2r\theta'),
\qquad
\chi_V(\tau\sigma^j)=0.
\]
We choose
\[
\sqrt{\chi_V(\sigma^r)-2}
=
2i\sin(r\theta'),
\qquad
1\le r\le n,
\]
and
\[
\sqrt{\chi_V(\tau\sigma^j)-2}
=
\sqrt{-2}
=
\sqrt2\,i.
\]

The Bryan--Gholampour change of variables \eqref{BGLintro} is
\[
L(\alpha_j)
=
\sum_{r=1}^{n+2}
\sqrt{\chi_V(g_r)-2}\,
\chi_j(g_r)\,
\mathbf1_{[r]},
\]
where \(\chi_j\) is the irreducible character corresponding to
\(\alpha_j\) under the McKay labelling above.

Set
\[
I_n:=\{1,3,\ldots,2n-1\}
\]
and, for \(m\in I_n\), define
\[
W_m
:=
\sum_{r=1}^n
\sin(mr\theta')\mathbf1_{[r]}.
\]

\begin{lmm}\label{lem:D-L-orthonormal}
On the standard orthonormal basis $\{ \ve_1, \ldots , \ve_{n+2} \}$,
\[
L(\varepsilon_1)
=
-\sqrt2\,iB_+,
\]
\[
L(\varepsilon_a)
=
-2iW_{2a-3},
\qquad
2\le a\le n+1,
\]
and
\[
L(\varepsilon_{n+2})
=
-\sqrt2\, i\kappa_nB_- \, ,
\]
where $B_\pm$ are defined in \eqref{B+-} and $\ka_n$ in \eqref{kn}.
\end{lmm}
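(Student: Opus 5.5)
The plan is to express each orthonormal vector $\varepsilon_a$ in terms of the simple roots $\alpha_1,\ldots,\alpha_{n+2}$, apply the defining formula \eqref{BGLintro} of $L$ with the McKay labelling of Subsection~\ref{conjugacy_classes_Dn}, and then evaluate the characters on each class representative $g_r$. From the realization of $D_{n+2}$ above one reads off
\[
\varepsilon_a=\sum_{b=a}^{n}\alpha_b+\tfrac12(\alpha_{n+1}+\alpha_{n+2}),\qquad 1\le a\le n+1,
\]
where the sum is empty for $a=n+1$. One also has $\varepsilon_{n+2}=\tfrac12(\alpha_{n+2}-\alpha_{n+1})$. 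Under the labelling, $\alpha_1\leftrightarrow Q_1$, $\alpha_b\leftrightarrow R_{b-1}$ for $2\le b\le n$, $\alpha_{n+1}\leftrightarrow Q_2$ and $\alpha_{n+2}\leftrightarrow Q_3$. So the coefficient of $\mathbf1_{[r]}$ in $L(\varepsilon_a)$ is $\sqrt{\chi_V(g_r)-2}$ times the corresponding linear combination of characters evaluated at $g_r$.

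\textbf{Non-cyclic classes.} On $\tau$ and $\tau\sigma$ all $\chi_{R_k}$ vanish. Also $\chi_{Q_2}+\chi_{Q_3}=0$ there, and $\tfrac12(\chi_{Q_3}-\chi_{Q_2})$ equals $-\kappa_n$ on $\tau$ and $+\kappa_n$ on $\tau\sigma$. Hence:
\begin{itemize}
\item for $2\le a\le n+1$ there is no $B_\pm$-component;
\item for $a=1$ only $\chi_{Q_1}=-1$ survives, giving $-\sqrt2\,i(\mathbf1_{[n+1]}+\mathbf1_{[n+2]})=-\sqrt2\,iB_+$;
\item $\varepsilon_{n+2}$ gives $-\sqrt2\,i\kappa_n(\mathbf1_{[n+1]}-\mathbf1_{[n+2]})=-\sqrt2\,i\kappa_nB_-$.
\end{itemize}

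\textbf{Cyclic classes.} Set $x=r\theta'$, so that $\sqrt{\chi_V(\sigma^r)-2}=2i\sin x$. On $\sigma^r$ we have $\tfrac12(\chi_{Q_2}+\chi_{Q_3})=(-1)^r$ and $\tfrac12(\chi_{Q_3}-\chi_{Q_2})=0$, so $\varepsilon_{n+2}$ has no cyclic component. The key tool is the telescoping identity $2\cos(2kx)\sin x=\sin((2k+1)x)-\sin((2k-1)x)$, which gives
\[
\sin x\sum_{k=a-1}^{n-1}2\cos(2kx)=\sin((2n-1)x)-\sin((2a-3)x).
\]
Combined with $\sin((2n-1)x)=\sin(r\pi-x)=-(-1)^r\sin x$, the contribution $(-1)^r\sin x$ of $Q_2,Q_3$ cancels the boundary term $\sin((2n-1)x)$. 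This leaves the coefficient $-2i\sin((2a-3)r\theta')$, i.e.\ $L(\varepsilon_a)=-2iW_{2a-3}$ for $2\le a\le n+1$. The case $a=n+1$ needs no sum, and follows directly from $2i(-1)^r\sin x=-2i\sin((2n-1)x)$. For $a=1$ the extra term $\chi_{Q_1}(\sigma^r)=1$ contributes $\sin x$, which cancels $-\sin((2\cdot2-3)x)=-\sin x$. So the total cyclic coefficient vanishes and $L(\varepsilon_1)$ is purely $-\sqrt2\,iB_+$.

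The main obstacle is bookkeeping rather than any conceptual step. One must make sure that the McKay labelling, the sign conventions in $R_k(\tau)$ and $Q_{2,3}(\tau)=\pm\kappa_n$, and the Bryan--Gholampour branch of the square root are all consistent. I would check the endpoint cases $a=1$ and $a=n+1$, and the values on $\tau\sigma$ (where $Q_2(\tau\sigma)=-\kappa_n$), explicitly. The case $n=2$ should be verified separately to confirm that the $D_4$ labelling fits the same formulas.
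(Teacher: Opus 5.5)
Your proof is correct. It is essentially the paper's argument run in the opposite direction: the paper defines $\tilde L$ on $\varepsilon_1,\ldots,\varepsilon_{n+2}$ by the claimed formulas and checks $\tilde L(\alpha_a)=L(\alpha_a)$ on each simple root, using $\sin((2a-1)x)-\sin((2a-3)x)=2\sin x\cos(2(a-1)x)$ and $\sin((2n-1)r\theta')=(-1)^{r+1}\sin(r\theta')$, whereas you invert $\alpha_a=\varepsilon_a-\varepsilon_{a+1}$ and telescope the same identity. Your character values, the cancellation of the $Q_2,Q_3$ contribution against the boundary sine term, and the endpoint cases $a=1$, $a=n+1$ and $\varepsilon_{n+2}$ all check out, and the case $n=2$ needs no separate treatment.
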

\begin{proof}
Let $\tilde{L} \colon \C^{n+2} \to H^*_{\C^*, {\rm CR}}([\C^2/\cD_n])$ be the linear map defined  on the basis
vectors $\ve_1, \ldots , \ve_{n+2}$ by the formulas in the statement. Then,   
it is enough to check that $\tilde{L} (\alpha_a) = L(\alpha_a)$, for all $a= 1, \ldots , n+2$.

For \(\alpha_1=\varepsilon_1-\varepsilon_2\), a direct computation using the character \(\chi_1\) gives
\[
\tilde{L}(\ve_1) - \tilde{L}(\ve_2) = 2iW_1-\sqrt2\,iB_+ = L(\alpha_1) \, .
\]

For \(2\le a\le n\), the standard identity
\[
\sin((2a-1)x)-\sin((2a-3)x)
=
2\sin x\cos(2(a-1)x) \,  
\]
and the formulas for $\chi_{R_{a-1}}$ show that
\begin{align}
\tilde{L}(\ve_a) - \tilde{L}(\ve_{a+1}) = & -2iW_{2a-3} +2iW_{2a-1} \nonumber \\
= & 2i\sum_{r=1}^n \sin((2a-1)r\theta')\mathbf1_{[r]} -2i \sum_{r=1}^n \sin((2a-3)r\theta')\mathbf1_{[r]} \nonumber \\
= & 4i\sum_{r=1}^n
\sin(r\theta')
\cos(2(a-1)r\theta')\mathbf1_{[r]} = L(\alpha_{a}) \, . \nonumber
\end{align}

Finally, using the identity
\[
\sin((2n-1)r\theta')
=
(-1)^{r+1}\sin(r\theta') \, 
\]
together with the values of \( Q_2, Q_3\) on
\(\tau,\tau\sigma\), we obtain
\[
\tilde{L}(\ve_{n+1}) -\tilde{L}(\ve_{n+2}) = L(\alpha_{n+1}) \, , \qquad 
\tilde{L}(\ve_{n+1}) +\tilde{L}(\ve_{n+2}) = L(\alpha_{n+2}) \, .
\]
\end{proof}

%====================================================================
\subsection{Fourier identities}
%====================================================================

The finite trigonometric identities used below are instances of standard discrete Fourier identities 
for cotangent functions; see, for example, \cite[Chapter 7]{BeckRobins} and \cite{Beck}. 
We include the required formulas and their proofs for completeness.

For \(m\in I_n =\{1,3,\ldots,2n-1\} \), define
\[
V_m
:=
\sum_{a=1}^{2n-1}
\sin(ma\theta')z^a
\in \mathcal R.
\]
Since \(m\) is odd, $\sin(m(2n-a)\theta')=\sin(ma\theta')$, 
and therefore
\begin{equation}\label{eq:D-Psi-W}
\Psi(W_m)=V_m,
\qquad
\widehat\Psi(tW_m)=V_m.
\end{equation}

We shall use two standard finite Fourier identities. 

First, the character orthogonality relations for
\(\mathbb Z/(2n)\mathbb Z\) (see \cite{Serre77}), together with the pairing of the
terms indexed by \(r\) and \(2n-r\), give
\begin{equation}\label{eq:D-sine-orthogonality}
\sum_{r=1}^{n-1}
\sin(mr\theta')\sin(\ell r\theta')
+
\frac12
\sin(mn\theta')\sin(\ell n\theta')
=
\frac n2\delta_{m\ell},
\qquad
m,\ell\in I_n.
\end{equation}
Indeed, using again $\sin(m(2n-r)\theta')=\sin(mr\theta')$ and the analogous identity for $\ell$ ($m, \ell \in I_n$),
twice the left hand side of \eqref{eq:D-sine-orthogonality} can be rewritten as
\begin{align*}
\sum_{r=1}^{2n-1} \sin(mr\theta')\sin(\ell r\theta') & = \sum_{r=0}^{2n-1} \sin(mr\theta')\sin(\ell r\theta') \\
& =\frac{1}{2} \sum_{r=0}^{2n-1} \left[ \cos ((m-\ell)r\theta') - \cos ((m+\ell)r\theta') \right] \, .
\end{align*}
For $a:= (m-\ell)/2$ and $b:= (m+\ell)/2$, the previous expression is the real part of
\[
\frac{1}{2}\sum_{r=0}^{2n-1} e^{2\pi i a r /(2n)} - \frac{1}{2}\sum_{r=0}^{2n-1} e^{2\pi i b r /(2n)} \, .
\]
By the character orthogonality relations, the first term equals  $n\delta_{m \ell}$,  
while the second one vanishes, since  $1\leq b=(m+\ell)/2 \leq 2n-1$.

Since \(|I_n|=n\), the fact that \(\Psi\) and \(\widehat\Psi\) are linear isomorphisms, together with 
\eqref{eq:D-Psi-W} and \eqref{eq:D-sine-orthogonality}, shows that
\[
\{V_m\mid m\in I_n\}
\]
is a basis of $\operatorname{Span}_{\mathbb C}\{E_1,\ldots,E_n\}$. 
Consequently,
\[
\{1\}\cup\{V_m\mid m\in I_n\}
\]
is another basis of \(R^\iota\).

Second,  differentiating the finite geometric series
\[ 
\sum_{r=0}^{N-1}x^r=\frac{1-x^N}{1-x} 
\]
and then setting \(x=e^{is\pi/N}\) and taking imaginary parts gives, for \(N\ge2\) and \(s\in\mathbb Z\),
\begin{equation}\label{eq:D-weighted-sine}
\sum_{r=1}^{N-1}
(N-r)\sin\left(\frac{s\pi r}{N}\right)
=
\begin{cases}
\displaystyle
\frac N2
\cot\left(\frac{s\pi}{2N}\right),
&
s\not\equiv0\pmod{2N},\\[3mm]
0,
&
s\equiv0\pmod{2N}.
\end{cases}
\end{equation}

We shall also need the following consequence of the finite
Fourier--cotangent identity proved in the appendix:
\begin{equation}\label{eq:D-cotangent-sum}
\sum_{s=1}^{N-1}
\cot\left(\frac{s\pi}{2N}\right)
\sin\left(\frac{sr\pi}{N}\right)
=
N-r,
\qquad
1\le r\le N.
\end{equation}
Indeed, apply Lemma~\ref{lem:cot-fourier} with \(2N\) in place
of \(n\), take imaginary parts, and pair the terms indexed by
\(s\) and \(2N-s\); the term $s=N$ vanishes.

The following identities contain the Fourier calculations
specific to the binary dihedral case.

\begin{lmm}\label{lem:D-Fourier-identities}
For \(m,\ell\in I_n\), the following identities hold.

\begin{enumerate}
\item[\rm(i)]
\[
V_m(1)
=
\cot\left(\frac{m\theta'}{2}\right),
\qquad
V_m(-1)
=
-\tan\left(\frac{m\theta'}{2}\right).
\]

\item[\rm(ii)]
If \(m\ne\ell\), then
\begin{align}
V_mV_\ell
={}&
\frac12
\left[
\cot\left(\frac{(m+\ell)\theta'}{2}\right)
-
\cot\left(\frac{(m-\ell)\theta'}{2}\right)
\right]V_m
\nonumber\\
&+
\frac12
\left[
\cot\left(\frac{(m-\ell)\theta'}{2}\right)
+
\cot\left(\frac{(m+\ell)\theta'}{2}\right)
\right]V_\ell .
\label{eq:D-VmVl}
\end{align}

\item[\rm(iii)]
\begin{align}
V_m^2
={}&
n
+
\frac32\cot(m\theta')V_m
\nonumber\\
&+
\frac12
\sum_{\substack{\ell\in I_n\\\ell\ne m}}
\left[
\cot\left(\frac{(\ell+m)\theta'}{2}\right)
+
\cot\left(\frac{(\ell-m)\theta'}{2}\right)
\right]V_\ell .
\label{eq:D-Vm-square}
\end{align}

\item[\rm(iv)]
\[
\sum_{m\in I_n}
\cot\left(\frac{m\theta'}{2}\right)V_m
=
n\sum_{r=1}^nE_r,
\]
and
\[
\sum_{m\in I_n}
\tan\left(\frac{m\theta'}{2}\right)V_m
=
-n\sum_{r=1}^n(-1)^rE_r.
\]
\end{enumerate}
\end{lmm}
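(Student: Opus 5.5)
Throughout we work in $\mathcal R=\mathbb C[z,z^{-1}]/(z^{2n}-1)$ and expand in the characters of $\mathbb Z/2n\mathbb Z$. Write $\omega=e^{\pi i/n}=\eta$. Then $\sin(ma\theta')=(\omega^{ma/2}-\omega^{-ma/2})/(2i)$ involves half-integer powers. It is cleaner to note that evaluation at $z=\omega^k$, for $k\in\mathbb Z/2n$, is an algebra isomorphism $\mathcal R\to\mathbb C^{2n}$, and to compute each identity pointwise on these $2n$ characters.

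\textbf{Part (i).} Evaluate $V_m$ at $z=\pm1$ directly. At $z=1$ we have $V_m(1)=\sum_{a=1}^{2n-1}\sin(ma\theta')$, which is the imaginary part of a geometric sum with ratio $e^{im\theta'}$. Since $m$ is odd, $e^{im\theta'\cdot 2n}=-1$, and the sum evaluates to $\cot(m\theta'/2)$. At $z=-1$ we insert $(-1)^a=\cos(\pi a)$, which replaces $m\theta'$ by $(m+2n)\theta'$ in the same computation. This gives $\cot((m+2n)\theta'/2)=-\tan(m\theta'/2)$.

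\textbf{Part (iv).} This is the same identity read through sine orthogonality \eqref{eq:D-sine-orthogonality}. The coefficient of $E_r$ in $\sum_m \cot(m\theta'/2)V_m$ is $\sum_{m\in I_n}\cot(m\theta'/2)\sin(mr\theta')$, counted with multiplicity $2$ for $r<n$. Take \eqref{eq:D-cotangent-sum} with $N=2n$, pair $s$ with $2n$ shifted appropriately, and separate even from odd $s$: the even $s$ reproduce the same sum for $n$. Then subtract and use \eqref{eq:D-weighted-sine}. The result should be the constant value $n$ for every $1\le r\le n$. The second identity follows by replacing $r\theta'$ with $r\theta'+\pi r$, i.e.\ by inserting the twist $(-1)^r$.

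\textbf{Parts (ii) and (iii).} The cleanest route is to multiply $V_m$ by $(z-1)$-type factors to make it a difference of geometric series. Writing $\sin(ma\theta')=\mathrm{Im}\,e^{ima\theta'}$, we have
\[
V_m=\tfrac{1}{2i}\bigl(S(e^{im\theta'}z)-S(e^{-im\theta'}z)\bigr),\qquad S(w)=\sum_{a=0}^{2n-1}w^a.
\]
In $\mathcal R$, $(1-w)S(w)=1-w^{2n}=1-e^{\pm i m\pi}z^{2n}=2$, so $S(\xi z)=2/(1-\xi z)$ makes sense as an element of $\mathcal R$; $1-\xi z$ is invertible because $\xi^{-1}$ is not a $2n$-th root of unity. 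Thus
\[
V_m=\tfrac{1}{i}\Bigl(\tfrac{1}{1-\xi_m z}-\tfrac{1}{1-\bar\xi_m z}\Bigr),\qquad \xi_m=e^{im\theta'}.
\]
The products $V_mV_\ell$ then reduce by partial fractions:
\[
\frac{1}{(1-\xi z)(1-\xi' z)}=\frac{1}{\xi-\xi'}\Bigl(\frac{\xi}{1-\xi z}-\frac{\xi'}{1-\xi' z}\Bigr).
\]
The coefficients $\xi/(\xi-\xi')$ become $\tfrac12\bigl(1-i\cot((m\mp\ell)\theta'/2)\bigr)$. Collecting terms, the constant contributions cancel for $m\neq\ell$ and yield \eqref{eq:D-VmVl}.

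For $m=\ell$, three kinds of terms appear: the cross term $\xi_m,\bar\xi_m$, which gives $\cot(m\theta')$-type coefficients, and the squares $(1-\xi z)^{-2}$. I would handle the squares by differentiating $S$, which produces the constant $n$ and the $\tfrac32\cot(m\theta')V_m$ contribution. The squares also contain $\sum_a a\,\xi^a z^a$-type pieces, which must be re-expanded in the basis $\{1\}\cup\{V_\ell\}$. I would do this re-expansion using orthogonality \eqref{eq:D-sine-orthogonality}, computing the coefficients via \eqref{eq:D-weighted-sine}.

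The main obstacle is (iii), the bookkeeping of the $V_m^2$ coefficients. A safer alternative is to verify (iii) pointwise. Evaluate both sides at $z=\omega^k$, using $V_m(\omega^k)=\tfrac12\bigl[\cot((m+2k)\theta'/2)+\cot((m-2k)\theta'/2)\bigr]$, which follows as in (i). This reduces (iii) to a cotangent identity: $\cot x\cot y=\cot(x-y)\cdot(\dots)-1$. The required sum over $\ell$ then comes from (iv)-type sums.
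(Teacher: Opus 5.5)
Parts (i) and (ii) are correct and are essentially the paper's argument. Your linear factors $1-\xi_m^{\pm1}z$ refine the paper's quadratic denominator, and the coefficients $\tfrac12(1-i\cot((m\mp\ell)\theta'/2))$ do assemble into \eqref{eq:D-VmVl}. Part (iv) also follows the paper: apply \eqref{eq:D-cotangent-sum} with $N=2n$ and $N=n$ and subtract the even indices. Three remarks on (iv). First, the coefficient of $E_r$ is exactly $\sum_{m\in I_n}\cot(m\theta'/2)\sin(mr\theta')$, with no multiplicity $2$, since $V_m=\sum_{r=1}^n\sin(mr\theta')E_r$. Second, \eqref{eq:D-weighted-sine} plays no role there. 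Third, the $\tan$ identity needs the reindexing $m\mapsto 2n-m$ in addition to the sign $(-1)^r$. One way to get it is the automorphism $z\mapsto -z$ of $\mathcal R$, which sends $E_r\mapsto(-1)^rE_r$ and $V_m\mapsto -V_{2n-m}$, combined with $\cot((2n-m)\theta'/2)=\tan(m\theta'/2)$.

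The genuine gap is (iii): you announce the outcome rather than derive it, and the bookkeeping you describe is wrong. Put $P_\pm:=(1-\xi_m^{\pm1}z)^{-1}$, so that $V_m=-i(P_+-P_-)$ and $V_m^2=2P_+P_- -(P_+^2+P_-^2)$. Partial fractions give $2P_+P_-=(P_++P_-)+\cot(m\theta')V_m$, where $P_++P_-=\sum_{b=0}^{2n-1}\cos(mb\theta')z^b$. Differentiating $(1-w)S(w)=1-w^{2n}$ and using $w^{2n}=-1$ gives $P_+^2+P_-^2=\sum_{b=0}^{2n-1}(b+1-n)\cos(mb\theta')z^b$. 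Since $\cos(m(2n-b)\theta')=-\cos(mb\theta')$, neither piece lies in $\mathcal R^\iota$. So the square terms cannot be re-expanded on their own in the basis $\{1\}\cup\{V_\ell\}$. They also do not produce the constant $n$ (their contribution to the constant term is $n-1$), nor the $\tfrac32\cot(m\theta')V_m$. Only the combination
\[
V_m^2=\sum_{b=0}^{2n-1}(n-b)\cos(mb\theta')z^b+\cot(m\theta')V_m
\]
is invariant, and this is exactly \eqref{eq:D-cr}.

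What remains is the actual content of (iii), which is the paper's computation:
\begin{itemize}
\item Expand the first sum as $n+\sum_\ell w_\ell V_\ell$ using \eqref{eq:D-sine-orthogonality}; its $E_n$-coefficient vanishes.
\item Write $\cos(mr\theta')\sin(\ell r\theta')=\tfrac12[\sin((\ell+m)r\theta')+\sin((\ell-m)r\theta')]$.
\item Apply \eqref{eq:D-weighted-sine} with $s_\pm=(\ell\pm m)/2$.
\end{itemize}
For $\ell\ne m$ this yields $w_\ell=\tfrac12[\cot((\ell+m)\theta'/2)+\cot((\ell-m)\theta'/2)]$. For $\ell=m$ the $s_-=0$ sum vanishes and $w_m=\tfrac12\cot(m\theta')$. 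Adding the $\cot(m\theta')V_m$ from the cross term is where the $\tfrac32$ comes from.

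Your pointwise fallback does not close the gap either. Evaluating at $z=\omega^k$ is legitimate, and your formula for $V_m(\omega^k)$ is right. But the resulting identity involves a sum over $\ell$ of products of cotangents. Both the cotangent identity (left as ``$(\dots)$'') and the evaluation of the resulting $\ell$-sums are unspecified, so this route is also unfinished.
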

The proof is given in Appendix \ref{Appendix B}.

%====================================================================
\subsection{Proof of Theorem~\ref{thm:D-CCRC}}
%====================================================================

\begin{proof}
Let
\[
\operatorname{pr}_{\mathrm{nt}}:
H^*_{\mathbb C^*,\mathrm{CR}}([\mathbb C^2/\cD_n])
\longrightarrow
\bigoplus_{r=1}^{n+2}\mathbb C[t]\mathbf1_{[r]}
\]
denote projection away from the identity sector. 
By Proposition \ref{prop:mckay-coordinates}(ii), \(L\) is an isomorphism of \(\mathbb C[t]\)-modules. 
The compatibility of the identity-sector contributions follows from the general argument in the proof of 
Proposition~\ref{prop:uniform-reduction}. 
Since \(\varepsilon_1,\ldots,\varepsilon_{n+2}\) form a basis of the root space, it remains to prove
\begin{equation}\label{eq:D-multiplicativity}
tL\bigl(\mathcal Q(\varepsilon_a,\varepsilon_b)\bigr)
=
\operatorname{pr}_{\mathrm{nt}}\!\left(
L(\varepsilon_a)\cup_{\mathbb C^*,\mathrm{CR}}L(\varepsilon_b)
\right)
\end{equation}
for \(1\le a,b\le n+2\), where $\mathcal{Q}$ is defined in \eqref{def:QBG}.  
By commutativity, assume \(a\le b\).

Throughout the proof we shall use the function $F$ defined in \eqref{DefF} and the notation $\lambda_a$
of Lemma \ref{lem:D-height}.

If \(a<b\), the only positive roots pairing non-trivially with both
\(\varepsilon_a,\varepsilon_b\) are
\(\varepsilon_a-\varepsilon_b\) and \(\varepsilon_a+\varepsilon_b\). Hence
\begin{equation}\label{eq:D-offdiag}
\mathcal Q(\varepsilon_a,\varepsilon_b)
=
-F(\lambda_a-\lambda_b)(\varepsilon_a-\varepsilon_b)
+F(\lambda_a+\lambda_b)(\varepsilon_a+\varepsilon_b).
\end{equation}
For a diagonal pair,
\begin{align}
\mathcal Q(\varepsilon_a,\varepsilon_a)
={}&
\sum_{c<a}
\left[
F(\lambda_c-\lambda_a)(\varepsilon_c-\varepsilon_a)
+F(\lambda_c+\lambda_a)(\varepsilon_c+\varepsilon_a)
\right]
\nonumber\\
&+
\sum_{c>a}
\left[
F(\lambda_a-\lambda_c)(\varepsilon_a-\varepsilon_c)
+F(\lambda_a+\lambda_c)(\varepsilon_a+\varepsilon_c)
\right].
\label{eq:D-diagonal}
\end{align}

\medskip
\noindent
\textit{Case 1: \(2\le a<b\le n+1\).}
Put $m=2a-3$, $\ell=2b-3$. Then (see Lemma \ref{lem:D-height}) $\lambda_a=2n-m$, $\lambda_b=2n-\ell$.
By definition of $F$, \eqref{DefF},
\[
F(\lambda_a-\lambda_b)
=
i\cot\left(\frac{(\ell-m)\theta'}{2}\right)
\]
and
\[
F(\lambda_a+\lambda_b)
=
-i\cot\left(\frac{(m+\ell)\theta'}{2}\right),
\]
equation~\eqref{eq:D-offdiag}, followed by
Lemma~\ref{lem:D-L-orthonormal}, gives
\begin{align*}
\widehat\Psi
\bigl(
tL(\mathcal Q(\varepsilon_a,\varepsilon_b))
\bigr)
={}&
-2
\left[
\cot\left(\frac{(m+\ell)\theta'}{2}\right)
-
\cot\left(\frac{(m-\ell)\theta'}{2}\right)
\right]V_m
\\
&-
2
\left[
\cot\left(\frac{(m-\ell)\theta'}{2}\right)
+
\cot\left(\frac{(m+\ell)\theta'}{2}\right)
\right]V_\ell.
\end{align*}
By Lemma~\ref{lem:D-Fourier-identities}(ii), this is
\[
-4V_mV_\ell.
\]
On the other hand,
\[
L(\varepsilon_a)=-2iW_m,
\qquad
L(\varepsilon_b)=-2iW_\ell,
\]
so Lemma~\ref{lem:D-CR-compact} gives
\[
\widehat\Psi
\left(
L(\varepsilon_a)
\cup_{\mathbb C^*,\mathrm{CR}}
L(\varepsilon_b)
\right)
=
(-2i)^2V_mV_\ell
=
-4V_mV_\ell.
\]
Since \(\widehat\Psi\) is injective,
\eqref{eq:D-multiplicativity} follows.

\medskip
\noindent
\textit{Case 2: \(a=1\) and \(2\le b\le n+1\).}
For $m=2b-3$, $\lambda_1-\lambda_b=m$ and $\lambda_1+\lambda_b=4n-m$.
Since
\[
F(4n-m)=-F(m),
\]
equation~\eqref{eq:D-offdiag} gives
\[
\mathcal Q(\varepsilon_1,\varepsilon_b)
=
-2F(m)\varepsilon_1.
\]
Hence
\[
tL(\mathcal Q(\varepsilon_1,\varepsilon_b))
=
-2\sqrt2
\cot\left(\frac{m\theta'}{2}\right)tB_+.
\]
On the other hand,
\[
L(\varepsilon_1)=-\sqrt2\,iB_+,
\qquad
L(\varepsilon_b)=-2iW_m.
\]
By \eqref{eq:D-B-action} and
Lemma~\ref{lem:D-Fourier-identities}(i),
\[
L(\varepsilon_1)
\cup_{\mathbb C^*,\mathrm{CR}}
L(\varepsilon_b)
=
-2\sqrt2\,
V_m(1)tB_+
=
-2\sqrt2
\cot\left(\frac{m\theta'}{2}\right)tB_+.
\]

\medskip
\noindent
\textit{Case 3: \(2\le a\le n+1\) and \(b=n+2\).}
For $m=2a-3$, $\lambda_a=2n-m$. Since $\lambda_{n+2}=0$, equation~\eqref{eq:D-offdiag} gives
\[
\mathcal Q(\varepsilon_a,\varepsilon_{n+2})
=
2F(2n-m)\varepsilon_{n+2}.
\]
Therefore
\[
tL(\mathcal Q(\varepsilon_a,\varepsilon_{n+2}))
=
2\sqrt2\, \kappa_n
\tan\left(\frac{m\theta'}{2}\right)tB_-.
\]
On the orbifold side, $L(\varepsilon_a)=-2iW_m$ and $L(\varepsilon_{n+2})=-\sqrt2\, i\kappa_nB_-$.
Equation~\eqref{eq:D-X-action} and
Lemma~\ref{lem:D-Fourier-identities}(i) give the same result.

\medskip
\noindent
\textit{Case 4: \(a=1\) and \(b=n+2\).}
Here $\lambda_1-\lambda_{n+2} = \lambda_1+\lambda_{n+2} = 2n$. Since \(F(2n)=0\),
$\mathcal Q(\varepsilon_1,\varepsilon_{n+2})=0$.

The orbifold product also vanishes because
\[
L(\varepsilon_1)
=
-\sqrt2\,iB_+,
\qquad
L(\varepsilon_{n+2})
=
-\sqrt2\, i\kappa_nB_-,
\]
and $B_+\cup_{\mathbb C^*,\mathrm{CR}}B_-=0$. 

\medskip
\noindent
\textit{Case 5: \(2\le a\le n+1\) and \(a=b\).}
Put $m=2a-3$. For every middle index $2\le c\le n+1$, $c\ne a$, write $\ell=2c-3$. 

We first determine the contribution of such an index \(c\).
Suppose \(c<a\), so that \(\ell<m\). Then
\[
\lambda_c-\lambda_a=m-\ell,
\qquad
\lambda_c+\lambda_a=4n-m-\ell.
\]
Hence the corresponding contribution in
\eqref{eq:D-diagonal} is
\[
F(m-\ell)(\varepsilon_c-\varepsilon_a)
-
F(m+\ell)(\varepsilon_c+\varepsilon_a).
\]
After applying \(L\) and \(\widehat\Psi\), its
\(V_\ell\)-component is
\[
-2
\left[
\cot\left(\frac{(\ell+m)\theta'}{2}\right)
+
\cot\left(\frac{(\ell-m)\theta'}{2}\right)
\right]V_\ell.
\]
If \(c>a\), so that \(\ell>m\), the same formula follows from $\lambda_a-\lambda_c=\ell-m$
and $\cot(-x)=-\cot x$. 
Thus, for every \(\ell\ne m\), the coefficient of \(V_\ell\)
in $\widehat\Psi \bigl( tL(\mathcal Q(\varepsilon_a,\varepsilon_a)) \bigr)$
is
\[
-2
\left[
\cot\left(\frac{(\ell+m)\theta'}{2}\right)
+
\cot\left(\frac{(\ell-m)\theta'}{2}\right)
\right].
\]

We next consider the two boundary indices.
The index \(c=1\) contributes
\[
-4\cot\left(\frac{m\theta'}{2}\right)V_m,
\]
whereas \(c=n+2\) contributes
\[
4\tan\left(\frac{m\theta'}{2}\right)V_m.
\]
Their total contribution is therefore
\[
4
\left[
\tan\left(\frac{m\theta'}{2}\right)
-
\cot\left(\frac{m\theta'}{2}\right)
\right]V_m.
\]

The remaining contribution to \(V_m\), coming from the middle
indices, is \(2S_mV_m\), where
\[
S_m
=
-\sum_{\substack{\ell\in I_n\\\ell<m}}
\cot\left(\frac{(m-\ell)\theta'}{2}\right)
+
\sum_{\substack{\ell\in I_n\\\ell>m}}
\cot\left(\frac{(\ell-m)\theta'}{2}\right)
-
\sum_{\substack{\ell\in I_n\\\ell\ne m}}
\cot\left(\frac{(m+\ell)\theta'}{2}\right).
\]
Write $m=2p-1$. 
Then
\[
S_m
=
-\sum_{r=1}^{p-1}\cot(r\theta')
+
\sum_{r=1}^{n-p}\cot(r\theta')
-
\sum_{\substack{s=p\\s\ne2p-1}}^{p+n-1}
\cot(s\theta').
\]
Moreover,
\[
\sum_{s=p}^{p+n-1}\cot(s\theta')
=
-\sum_{r=1}^{p-1}\cot(r\theta')
+
\sum_{r=1}^{n-p}\cot(r\theta'),
\]
where we used
\[
\cot(n\theta')=0,
\qquad
\cot((2n-s)\theta')=-\cot(s\theta').
\]
Since the third sum defining \(S_m\) is obtained from the
left-hand side by omitting the term $s=2p-1=m$, 
it follows that
\[
S_m=\cot(m\theta').
\]

Finally,
\[
\tan x-\cot x=-2\cot(2x)
\]
gives
\begin{align*}
\widehat\Psi
\bigl(
tL(\mathcal Q(\varepsilon_a,\varepsilon_a))
\bigr)
={}&
-6\cot(m\theta')V_m
\\
&-
2
\sum_{\substack{\ell\in I_n\\\ell\ne m}}
\left[
\cot\left(\frac{(\ell+m)\theta'}{2}\right)
+
\cot\left(\frac{(\ell-m)\theta'}{2}\right)
\right]V_\ell.
\end{align*}
By Lemma~\ref{lem:D-Fourier-identities}(iii), this equals $-4\bigl(V_m^2-n\bigr)$. 
On the Chen--Ruan side, $L(\varepsilon_a)=-2iW_m$, and therefore
\[
\widehat\Psi
\left(
\operatorname{pr}_{\mathrm{nt}}\!\left(
L(\varepsilon_a)
\cup_{\mathbb C^*,\mathrm{CR}}
L(\varepsilon_a)
\right)
\right)
=
-4\bigl(V_m^2-n\bigr).
\]
Since \(\widehat\Psi\) is injective,
\eqref{eq:D-multiplicativity} follows.

\medskip
\noindent
\textit{Case 6: \(a=b=1\).}
Equation~\eqref{eq:D-diagonal} simplifies to
\[
\mathcal Q(\varepsilon_1,\varepsilon_1)
=
-2
\sum_{a=2}^{n+1}
F(2a-3)\varepsilon_a.
\]
Indeed, the roots involving \(\varepsilon_1\) and
\(\varepsilon_{n+2}\) have height \(2n\) and hence contribute
zero. Thus
\[
\widehat\Psi
\bigl(
tL(\mathcal Q(\varepsilon_1,\varepsilon_1))
\bigr)
=
-4\sum_{m\in I_n}
\cot\left(\frac{m\theta'}{2}\right)V_m.
\]
By Lemma~\ref{lem:D-Fourier-identities}(iv),
\[
\widehat\Psi
\bigl(
tL(\mathcal Q(\varepsilon_1,\varepsilon_1))
\bigr)
=
-4n\sum_{r=1}^nE_r.
\]
On the other hand, $L(\varepsilon_1)=-\sqrt2\,iB_+$, so \eqref{eq:D-B-square} gives
\[
\operatorname{pr}_{\mathrm{nt}}\!\left(L(\varepsilon_1)
\cup_{\mathbb C^*,\mathrm{CR}}
L(\varepsilon_1)\right)
=
-4nt\sum_{r=1}^n\mathbf1_{[r]},
\]
whose image under \(\widehat\Psi\) is the same expression.
Since \(\widehat\Psi\) is injective, the two products agree.

\medskip
\noindent
\textit{Case 7: \(a=b=n+2\).}
Equation~\eqref{eq:D-diagonal} gives
\[
\mathcal Q(\varepsilon_{n+2},\varepsilon_{n+2})
=
2
\sum_{a=2}^{n+1}
F(2n-(2a-3))\varepsilon_a.
\]
Hence
\[
\widehat\Psi
\bigl(
tL(\mathcal Q(\varepsilon_{n+2},\varepsilon_{n+2}))
\bigr)
=
4\sum_{m\in I_n}
\tan\left(\frac{m\theta'}{2}\right)V_m.
\]
Lemma~\ref{lem:D-Fourier-identities}(iv) gives
\[
\widehat\Psi
\bigl(
tL(\mathcal Q(\varepsilon_{n+2},\varepsilon_{n+2}))
\bigr)
=
-4n\sum_{r=1}^n(-1)^rE_r.
\]
On the Chen--Ruan side, $L(\varepsilon_{n+2}) = -\sqrt2\, i\kappa_nB_-$. 
Since
\[
\kappa_n^2=(-1)^{n},
\]
equation~\eqref{eq:D-X-square} gives
\[
\operatorname{pr}_{\mathrm{nt}}\!\left(L(\varepsilon_{n+2})
\cup_{\mathbb C^*,\mathrm{CR}}
L(\varepsilon_{n+2})\right)
=
-4nt\sum_{r=1}^n(-1)^r\mathbf1_{[r]},
\]
and applying \(\widehat\Psi\) gives the same expression.
Again, injectivity of \(\widehat\Psi\) proves equality.

We have therefore verified \eqref{eq:D-multiplicativity} for every \(1\le a,b\le n+2\). 
This proves Theorem~\ref{thm:D-CCRC}.
\end{proof}

\section{The exceptional cases}
\label{sec:exceptional}

In this section we complete the proof of Theorem~\ref{mainthm} for the three
exceptional finite subgroups of \(\mathrm{SL}_2(\mathbb C)\), corresponding
to the root systems of types \(E_6\), \(E_7\), and \(E_8\).

The three cases follow the same scheme. Proposition~\ref{prop:mckay-coordinates}(ii) 
shows that the Bryan--Gholampour transformation is an isomorphism of the underlying
\(\mathbb C[t]\)-modules, while Proposition~\ref{prop:uniform-reduction} accounts uniformly for
the identity-sector contribution and reduces the comparison to the
non-trivial sectors. It therefore remains only to compare the quantum
correction with the non-trivial Chen--Ruan sectors.

For the exceptional groups no Fourier realization analogous to the
cyclic and binary dihedral cases will be used.  Instead, after fixing the
character table and the corresponding McKay labelling, we express both
sides of the required identity as finite matrices and verify their
equality exactly in the cyclotomic field determined by the
Bryan--Gholampour specialization.

\begin{thm}
\label{thm:exceptional}
Let \(G\subset \mathrm{SL}_2(\mathbb C)\) be one of the binary
tetrahedral, binary octahedral, or binary icosahedral groups, and let
\[
\pi:X\longrightarrow \mathbb C^2/G
\]
be the minimal resolution.  After the specialization~(1), the
Bryan--Gholampour transformation induces an isomorphism of
\(\mathbb C[t]\)-algebras
\[
L:
QH_{\mathbb C^*}^*(X)_\pi
\longrightarrow
\left(
H_{\mathbb C^*,\mathrm{CR}}^*
   \bigl([\mathbb C^2/G]\bigr),
\cup_{\mathbb C^*,\mathrm{CR}}
\right).
\]
\end{thm}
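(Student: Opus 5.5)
By Proposition~\ref{prop:uniform-reduction} it is enough to verify the master identity \eqref{master-identity} for every pair of non-trivial conjugacy classes $[g],[h]$ of $G$. Proposition~\ref{prop:mckay-coordinates} already gives the $\mathbb C[t]$-module isomorphism and the identity-sector compatibility, so only the $t$-linear component, i.e.\ the quantum correction $\mathcal Q$, has to be checked. I will use the symmetric trilinear form of the identity from the remark following Proposition~\ref{prop:uniform-reduction}:
\[
\bigl\langle \mathcal Q(v_g,v_h),v_k\bigr\rangle
=
-|G|\Delta_g\Delta_h\Delta_k
\sum_{\chi\in{\rm Irr}(G)}\frac{\chi(g)\chi(h)\chi(k)}{\chi(e)},
\]
for all non-trivial $[g],[h],[k]$. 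This form is more convenient than \eqref{master-identity} itself, because it needs neither the inverse of the McKay basis nor any division by $\Delta_k$. Moreover, both sides are symmetric in $g,h,k$, so only unordered triples have to be checked.

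For each of the three groups (binary tetrahedral of order $24$, binary octahedral of order $48$, binary icosahedral of order $120$), the data will be set up in the following order.
\begin{enumerate}
\item Fix the character table and the conjugacy class representatives. There are $7$, $8$ and $9$ classes respectively.
\item Read off the McKay graph from $\chi_V\cdot\chi_i$, which fixes the labelling $\chi_i\leftrightarrow\alpha_i$ and the dimensions $d_i$ as the highest-root coefficients of $E_6,E_7,E_8$.
\item Compute the vectors $v_g=\sum_i(\chi_i(g^{-1})-d_i)\alpha_i$ via \eqref{def:mckay-coordinate}.
\item List the positive roots ($36$, $63$ and $120$ of them) in simple-root coordinates, together with their weighted heights $h(\beta)=\sum d_ib_i$, so that $q^\beta=\zeta_{|G|}^{h(\beta)}$ with $1\le h(\beta)\le|G|-1$.
\end{enumerate}
Then $\frac{1+q^\beta}{1-q^\beta}$ is an explicit element of $\mathbb Q(\zeta_{|G|})$. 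The left-hand side becomes the finite sum $\sum_{\beta\in R^+}\langle v_g,\beta\rangle\langle v_h,\beta\rangle\langle v_k,\beta\rangle\frac{1+q^\beta}{1-q^\beta}$. Each pairing $\langle v_g,\beta\rangle$ lies in the character field, which is $\mathbb Q(\sqrt{-3})$, $\mathbb Q(\sqrt2)$ or $\mathbb Q(\sqrt5)$; these fields are contained in $\mathbb Q(\zeta_{24})$, $\mathbb Q(\zeta_{48})$ and $\mathbb Q(\zeta_{120})$ respectively.

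The right-hand side also involves $\Delta_g=\sqrt{\chi_V(g)-2}$. Here $\chi_V(g)=\zeta+\zeta^{-1}$ for an eigenvalue $\zeta=e^{i\phi}$ of $g$, so $\Delta_g=2i\sin(\phi/2)=\pm(\zeta^{1/2}-\zeta^{-1/2})$. I will choose the half-angle root of unity so that $\Delta_g$ is the positive multiple of $i$. Since $\zeta^{1/2}$ has order dividing $2|g|$, and $2|g|$ divides $|G|$ for these groups (the element orders are at most $6$, $8$, $10$), every $\Delta_g$ lies in $\mathbb Q(\zeta_{|G|})$. Both sides are therefore elements of a single cyclotomic field, and the identity reduces to an equality of finitely many elements of $\mathbb Q(\zeta_{|G|})$. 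I will check these equalities exactly by reducing modulo the cyclotomic polynomial. For $E_8$, for example, there are at most $\binom{8+2}{3}=120$ unordered triples. Organizing the pairings as a matrix $P=(\langle v_g,\beta\rangle)$ of size (classes)$\times|R^+|$ turns the left-hand side into a single weighted cubic contraction of $P$.

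The main obstacle is bookkeeping rather than conceptual. The critical points are the correct McKay labelling and the consistent sign convention for the $\Delta_g$. The $\Delta$'s enter cubically, so a wrong branch of the square root flips the sign of entire rows. Classes that are complex-conjugate to each other, namely those of orders $3$ and $6$ in the binary tetrahedral group, need care, because $\chi(g)\neq\chi(g^{-1})$ there.

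As a consistency check before the full verification, I will use the triples with $[k]=[e]$ excluded, confirm the diagonal Cartan pairing of Proposition~\ref{prop:mckay-coordinates}(iii) numerically, and test the identity for triples containing the central element $-1$, where it factorizes simply. If the verification fails for some triple, the first things to re-examine are the labelling and the branch choices; the identity itself cannot be adjusted.
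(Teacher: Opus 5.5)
Your plan is correct and is essentially the paper's proof: after Proposition~\ref{prop:uniform-reduction}, the paper also reduces the exceptional cases to finitely many exact identities in $\mathbb Q(e^{2\pi i/|G|})$ and checks them by reduction modulo $\Phi_{|G|}$. The difference is only in bookkeeping. The paper verifies the matrix identities $Q^{(k)}=M^{\mathsf T}N^{(k)}M$ in the simple-root basis, which amounts to $126$, $196$ and $288$ scalar identities. You instead verify the symmetric trilinear form of \eqref{master-identity} in McKay coordinates, which avoids computing the structure constants $N(g,h,k)$ and requires only $56$, $84$ and $120$ unordered triples.
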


\subsection{The common computational scheme}
\label{subsec:exceptional-common}

Fix one of the three exceptional types \(E_r\), where
\[
r\in\{6,7,8\}.
\]
Choose representatives
\[
g_0=e,g_1,\ldots,g_r
\]
of the conjugacy classes of \(G\), and write $[s]=\operatorname{Cl}(g_s)$ ($0\le s\le r$)
for the conjugacy class represented by \(g_s\).
Thus \([0]\) is the identity class.  We denote by
\(\mathbf 1_{[s]}\) the corresponding Chen--Ruan sector class.

Let
\[
\alpha_1,\ldots,\alpha_r
\]
be the simple roots, ordered according to the McKay correspondence with
the non-trivial irreducible characters
\[
\chi_1,\ldots,\chi_r.
\]
We denote by
\[
C=\bigl(\langle\alpha_i,\alpha_j\rangle\bigr)_{1\le i,j\le r}
\]
the finite Cartan matrix.  This notation distinguishes \(C\) from the
McKay adjacency matrix \(A\) introduced in Section~\ref{sec:mckay-coordinates}.

Let
\[
M=(m_{si})_{1\le s,i\le r}
\]
be the matrix of the Bryan--Gholampour transformation with respect to
the simple-root basis and the non-trivial sector basis, so that
\[
L(\alpha_i)
=
\sum_{s=1}^r m_{si}\mathbf 1_{[s]},
\qquad
1\le i\le r.
\]
Equivalently, by~\eqref{BGLintro},
\[
m_{si}
=
\sqrt{\chi_V(g_s)-2}\,\chi_i(g_s).
\]

For each \(1\le k\le r\), define the \(r\times r\) matrix
\[
N^{(k)}
:=
\bigl(N(g_s,g_t,g_k)\bigr)_{1\le s,t\le r},
\]
where the numbers \(N(g_s,g_t,g_k)\) are the class-algebra structure
constants of Remark~\ref{rmk_Burnside}.  Since every non-trivial element of a finite
subgroup of \(\mathrm{SL}_2(\mathbb C)\) has age one, Corollary~\ref{UTCRsympl}
gives
\[
\operatorname{pr}_{\mathrm{nt}}
\left(
L(\alpha_i)
\cup_{\mathbb C^*,\mathrm{CR}}
L(\alpha_j)
\right)
=
t\sum_{k=1}^r
\left(M^{\mathsf T}N^{(k)}M\right)_{ij}
\mathbf 1_{[k]},
\]
where \(\operatorname{pr}_{\mathrm{nt}}\) denotes projection onto the
sum of the non-trivial sectors.

We next describe the quantum side.  For every root
\(\beta\in R\), write
\[
\beta
=
\sum_{i=1}^r b_i(\beta)\alpha_i
\]
and set
\[
b(\beta)
:=
\begin{pmatrix}
b_1(\beta)\\
\vdots\\
b_r(\beta)
\end{pmatrix}.
\]
Since \(C\) is the Gram matrix of the simple roots, define
\[
p(\beta)
:=
Cb(\beta)
=
\begin{pmatrix}
\langle\alpha_1,\beta\rangle\\
\vdots\\
\langle\alpha_r,\beta\rangle
\end{pmatrix}.
\]
Moreover, by linearity, $L(\beta)=Mb(\beta)$ for the basis $\{\mathbf 1_{[1]}, \cdots \mathbf 1_{[r]}\}$.

Let $d_i=\chi_i(1)$, $1\le i\le r$, 
be the dimensions attached to the finite Dynkin vertices, and define
the weighted height
\[
h(\beta)
:=
\sum_{i=1}^r d_i b_i(\beta).
\] 
If $\xi:=\exp\left(\frac{2\pi i}{|G|}\right)$, then at the Bryan--Gholampour specialization~\eqref{BGq},
$q^\beta=\xi^{h(\beta)}$. 
Accordingly, we put
\[
F(m)
:=
\frac{1+\xi^m}{1-\xi^m}
\]
whenever \(\xi^m\ne1\).  

For \(1\le k\le r\), define the symmetric \(r\times r\) matrix
\[
Q^{(k)}
:=
\sum_{\beta\in R^+}
F(h(\beta))
\bigl(Mb(\beta)\bigr)_k
\,p(\beta)p(\beta)^{\mathsf T}.
\]
Indeed, using~\eqref{def:QBG}, for \(1\le i,j\le r\) we have
\[
Q(\alpha_i,\alpha_j)
=
\sum_{\beta\in R^+}
p_i(\beta)p_j(\beta)
F(h(\beta))\,\beta,
\]
and therefore
\[
L\bigl(Q(\alpha_i,\alpha_j)\bigr)
=
\sum_{k=1}^r
Q_{ij}^{(k)}\mathbf 1_{[k]}.
\]

Consequently, the comparison of the non-trivial sectors is reduced to
the following matrix identities.

\begin{lmm}
\label{lmm:exceptional-matrix-criterion}
With the notation above, the Bryan--Gholampour transformation is
compatible with the quantum correction in the exceptional type \(E_r\)
if and only if
\[
Q^{(k)}
=
M^{\mathsf T}N^{(k)}M,
\qquad
1\le k\le r.
\]
\end{lmm}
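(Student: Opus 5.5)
The lemma is a reformulation of the non-trivial-sector part of Proposition~\ref{prop:uniform-reduction} in the basis of simple roots rather than McKay coordinates. I would proceed as in the proof of that proposition. By Proposition~\ref{prop:mckay-coordinates}(ii), $L$ is an isomorphism of $\mathbb C[t]$-modules with $L(1)=1$, and both products are $\mathbb C[t]$-bilinear. Hence multiplicativity of $L$ is equivalent to $L(\alpha_i*\alpha_j)=L(\alpha_i)\cup_{\mathbb C^*,\mathrm{CR}}L(\alpha_j)$ for all $1\le i,j\le r$. I would decompose both sides by powers of $t$ and by sectors, using \eqref{BG-product-Q} on the quantum side and Corollary~\ref{UTCRsympl} on the orbifold side.

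First, the $t^2$-terms. On the orbifold side they live only in the identity sector, because every non-trivial element has age one. On the quantum side the $t^2$-term is $-|G|\langle\alpha_i,\alpha_j\rangle t^2$, also in the identity sector. Their agreement is exactly the identity-sector compatibility established in the proof of Proposition~\ref{prop:uniform-reduction}. That argument was carried out for the basis $v_g$, but it is bilinear, and the $v_g$ span the root space, so it transfers to the $\alpha_i$. The remaining terms are all of degree $t^1$ and lie in the non-trivial sectors, where a single power of $t$ appears on both sides. The quantum side contributes $tL(\mathcal Q(\alpha_i,\alpha_j))$.

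Next I would rewrite both sides in coordinates. For the quantum side, write $\langle\alpha_i,\beta\rangle=p_i(\beta)$ and $L(\beta)=\sum_k (Mb(\beta))_k\mathbf 1_{[k]}$. Then, by linearity of $L$ and \eqref{def:QBG} with $q^\beta=\xi^{h(\beta)}$, the coefficient of $t\mathbf 1_{[k]}$ is $Q^{(k)}_{ij}$, as already recorded before the lemma. For the orbifold side, expand $L(\alpha_i)=\sum_s m_{si}\mathbf 1_{[s]}$ and apply Corollary~\ref{UTCRsympl} with all ages equal to one. This gives $t\sum_k\sum_{s,t'}m_{si}m_{t'j}N(g_s,g_{t'},g_k)\mathbf 1_{[k]}$, whose $k$-th coefficient is $(M^{\mathsf T}N^{(k)}M)_{ij}$. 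The identity-class terms $N(g_s,g_{t'},e)$ carry $t^2$ and were already handled above.

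Since $\{\mathbf 1_{[k]}\}_{k\ge1}$ is a $\mathbb C[t]$-basis of the non-trivial part, equality of the $t$-terms for all $i,j$ is equivalent to $Q^{(k)}_{ij}=(M^{\mathsf T}N^{(k)}M)_{ij}$ for all $i,j,k$. That is the stated matrix identity, and it gives both directions of the equivalence. The only delicate point is the sector bookkeeping: one must check that no non-trivial-sector contribution arises at order $t^2$, and no identity-sector contribution at order $t$. Both follow from the age computation and from the form \eqref{BG-product-Q} of the quantum product, since $\mathcal Q$ takes values in the root space, which $L$ maps into the non-trivial sectors.
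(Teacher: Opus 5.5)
Your proposal is correct and follows essentially the same route as the paper: you read off the non-trivial-sector coefficients of $tL(\mathcal Q(\alpha_i,\alpha_j))$ and of $L(\alpha_i)\cup_{\mathbb C^*,\mathrm{CR}}L(\alpha_j)$ as $Q^{(k)}_{ij}$ and $(M^{\mathsf T}N^{(k)}M)_{ij}$, compare them in the basis $\{\mathbf 1_{[k]}\}_{k\ge 1}$, and defer the identity sector to Proposition~\ref{prop:uniform-reduction}. You also note explicitly that the identity-sector check transfers from the $v_g$ to the $\alpha_i$ by bilinearity; the paper leaves this step implicit.
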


\begin{proof}
For \(1\le i,j\le r\), the preceding formulas give
\[
\operatorname{pr}_{\mathrm{nt}}
\left(
L(\alpha_i)
\cup_{\mathbb C^*,\mathrm{CR}}
L(\alpha_j)
\right)
=
t\sum_{k=1}^r
\left(M^{\mathsf T}N^{(k)}M\right)_{ij}
\mathbf 1_{[k]},
\]
whereas
\[
tL\bigl(Q(\alpha_i,\alpha_j)\bigr)
=
t\sum_{k=1}^r
Q_{ij}^{(k)}\mathbf 1_{[k]}.
\]
Since the simple roots form a basis of the finite root space, equality
of the two bilinear maps is equivalent to equality of these coefficients
for every \(i,j,k\).  This is precisely the stated system of matrix
identities.  Proposition~\ref{prop:uniform-reduction} has already established the compatibility
of the identity-sector contribution, so no further condition is
required.
\end{proof}

It remains to explain how these matrix identities will be verified.
For each exceptional group, all entries occurring in the matrices
\(M\), \(N^{(k)}\), and \(Q^{(k)}\) belong to the cyclotomic field
\[
\mathbb K
:=
\mathbb Q(\xi)
\cong
\mathbb Q[X]/\bigl(\Phi_{|G|}(X)\bigr),
\]
where
\[
|G|=24,\ 48,\ 120
\]
in types \(E_6,E_7,E_8\), respectively.  The case-by-case formulas below
express all radicals occurring in \(M\) as elements of \(\mathbb K\),
while
\[
F(h(\beta))
=
\frac{1+\xi^{h(\beta)}}{1-\xi^{h(\beta)}}.
\]

For
\[
1\le i\le j\le r,
\qquad
1\le k\le r,
\]
set
\[
D_{ij}^{(k)}
:=
Q_{ij}^{(k)}
-
\left(M^{\mathsf T}N^{(k)}M\right)_{ij}
\in\mathbb K.
\]
Since both sides are symmetric in \(i,j\), there are
\[
r\binom{r+1}{2}
\]
scalar identities to verify.

After expressing every radical and every \(F(h(\beta))\) in terms of
\(\xi\), we clear the finitely many nonzero denominators
\(1-\xi^{h(\beta)}\).  Each \(D_{ij}^{(k)}\) is then represented by a
rational function in \(\xi\) whose denominator is nonzero and whose
numerator is a polynomial with rational coefficients.  Reducing this
numerator modulo the cyclotomic polynomial
\(\Phi_{|G|}(X)\) gives an exact test for the vanishing of
\(D_{ij}^{(k)}\).

Thus the three exceptional cases reduce respectively to
\[
6\binom{7}{2}=126,
\qquad
7\binom{8}{2}=196,
\qquad
8\binom{9}{2}=288
\]
exact scalar identities in
\[
\mathbb Q(e^{2\pi i/24}),
\qquad
\mathbb Q(e^{2\pi i/48}),
\qquad
\mathbb Q(e^{2\pi i/120}).
\]
In the following three subsections we give the complete group-theoretic
and root-theoretic data needed for these calculations and record, in
each case, a representative non-trivial identity illustrating the exact
cyclotomic reduction. Sage scripts implementing the verifications of all these identities are available from the authors.

\subsection{Type \(E_6\)}
\label{subsec:E6}

We now apply the common scheme of Subsection~\ref{subsec:exceptional-common}
to the binary tetrahedral group.

Let
\[
G=\mathcal T
\]
be the binary tetrahedral group of order \(24\).  We realize
\(\mathcal T\) as the subgroup of the unit quaternions
\[
\mathcal T
=
\{\pm e,\pm i,\pm j,\pm k\}
\cup
\left\{
\frac{\pm e\pm i\pm j\pm k}{2}
\right\},
\]
where all \(16\) choices of signs occur in the second set.  We use the
embedding
\[
\mathcal T\subset \mathrm{SL}_2(\mathbb C)
\]
determined by
\[
e\longmapsto
\begin{pmatrix}
1&0\\
0&1
\end{pmatrix},
\qquad
i\longmapsto
\begin{pmatrix}
i&0\\
0&-i
\end{pmatrix},
\]
\[
j\longmapsto
\begin{pmatrix}
0&1\\
-1&0
\end{pmatrix},
\qquad
k\longmapsto
\begin{pmatrix}
0&i\\
i&0
\end{pmatrix}.
\]
Thus
\[
i^2=j^2=k^2=-e,
\qquad
ij=k,\quad jk=i,\quad ki=j.
\]

Put
\[
\gamma:=\frac{e+i+j+k}{2}.
\]
We order the conjugacy classes as
\[
C_0=\{e\},
\qquad
C_1=\{-e\},
\qquad
C_2=\{\pm i,\pm j,\pm k\},
\]
\[
C_3=\operatorname{Cl}(\gamma),
\qquad
C_4=\operatorname{Cl}(\gamma^{-1}),
\qquad
C_5=-C_3,
\qquad
C_6=-C_4.
\]
Their respective sizes are
\[
1,\ 1,\ 6,\ 4,\ 4,\ 4,\ 4.
\]
We choose representatives
\[
g_0=e,\quad
g_1=-e,\quad
g_2=i,\quad
g_3=\gamma,\quad
g_4=\gamma^{-1},
\quad
g_5=-\gamma,\quad
g_6=-\gamma^{-1}.
\]
As in Subsection~\ref{subsec:exceptional-common}, we write
\[
[r]:=C_r,
\qquad
0\le r\le6.
\]

\medskip

\noindent\textbf{Character and McKay data.}
Let $\omega=e^{2\pi i/3}$. 
With the ordering above, the irreducible character table of
\(\mathcal T\) is
\[
\begin{array}{c|c|ccccccc}
 & |C_r|
 & \chi_0 & \chi_1 & \chi_2 & \chi_3
 & \chi_4 & \chi_5 & \chi_6 \\ \hline
C_0 & 1 & 1 & 1 & 2 & 2 & 3 & 2 & 1\\
C_1 & 1 & 1 & 1 & -2 & -2 & 3 & -2 & 1\\
C_2 & 6 & 1 & 1 & 0 & 0 & -1 & 0 & 1\\
C_3 & 4 & 1 & \omega & 1 & \omega
    & 0 & \omega^2 & \omega^2\\
C_4 & 4 & 1 & \omega^2 & 1 & \omega^2
    & 0 & \omega & \omega\\
C_5 & 4 & 1 & \omega & -1 & -\omega
    & 0 & -\omega^2 & \omega^2\\
C_6 & 4 & 1 & \omega^2 & -1 & -\omega^2
    & 0 & -\omega & \omega
\end{array}
\]
Here \(\chi_r=\chi_{\rho_r}\) is the character of the irreducible representation $\rho_r$ for $0\le r\le 6$. In particular, \(\rho_0\) is the trivial representation and $V=\rho_2$ is the natural two-dimensional representation.  Moreover,
\[
\rho_6=\rho_1^2,
\qquad
\rho_3=\rho_1\otimes\rho_2,
\qquad
\rho_5=\rho_6\otimes\rho_2.
\]
The displayed characters form the complete set of irreducible
characters of \(\mathcal T\); see also the standard character
orthogonality relations in \cite[Sec.~2.3]{Serre77}.

Tensoring by the natural representation \(V=\rho_2\) gives
\[
\rho_2\otimes\rho_0\simeq\rho_2,
\qquad
\rho_2\otimes\rho_1\simeq\rho_3,
\qquad
\rho_2\otimes\rho_6\simeq\rho_5,
\]
\[
\rho_2\otimes\rho_2\simeq\rho_0\oplus\rho_4,
\qquad
\rho_2\otimes\rho_3\simeq\rho_1\oplus\rho_4,
\]
\[
\rho_2\otimes\rho_5\simeq\rho_6\oplus\rho_4,
\qquad
\rho_2\otimes\rho_4
\simeq
\rho_2\oplus\rho_3\oplus\rho_5.
\]
Hence the affine McKay diagram is of type \(\widetilde E_6\);
see \cite{McKay}. We use the McKay labelling
\[
\rho_i\longleftrightarrow\alpha_i,
\qquad
1\le i\le6.
\]
Thus
\[
(d_1,\ldots,d_6)=(1,2,2,3,2,1).
\]

Let $\{\epsilon_1,\ldots,\epsilon_6 \}$
be the standard orthonormal basis of \(\mathbb C^6\), equipped with
the standard complex bilinear form.  We use the simple roots
\[
\alpha_1=\epsilon_1-\epsilon_2,
\qquad
\alpha_2=\epsilon_4-\epsilon_5,
\]
\[
\alpha_3=\epsilon_2-\epsilon_3,
\qquad
\alpha_4=\epsilon_3-\epsilon_4,
\qquad
\alpha_5=\epsilon_4+\epsilon_5,
\]
and
\[
\alpha_6
=
\frac12
\left(
-\epsilon_1-\epsilon_2-\epsilon_3-\epsilon_4-\epsilon_5
+\sqrt3\,\epsilon_6
\right).
\]
Their Gram matrix is the finite Cartan matrix
\[
C=
\bigl(\langle\alpha_i,\alpha_j\rangle\bigr)_{1\le i,j\le6}
=
\begin{pmatrix}
2&0&-1&0&0&0\\
0&2&0&-1&0&0\\
-1&0&2&-1&0&0\\
0&-1&-1&2&-1&0\\
0&0&0&-1&2&-1\\
0&0&0&0&-1&2
\end{pmatrix}.
\]

\medskip

\noindent\textbf{Roots and weighted height.}
We use the standard coordinate realization of the \(E_6\)-root system;
cf.~\cite{Humphreys}.  Write
\[
R=R_0\sqcup R_1,
\]
where
\[
R_0
=
\left\{
\pm\epsilon_a\pm\epsilon_b
\;\middle|\;
1\le a<b\le5
\right\},
\]
and
\[
R_1
=
\left\{
\frac12
\left(
\delta_1\epsilon_1+\cdots+\delta_5\epsilon_5
+\delta_6\sqrt3\,\epsilon_6
\right)
\;\middle|\;
\begin{array}{c}
\delta_j\in\{\pm1\},\\
\delta_1\cdots\delta_6=-1
\end{array}
\right\}.
\]
Hence
\[
|R_0|=40,
\qquad
|R_1|=32,
\qquad
|R|=72,
\qquad
|R^+|=36.
\]

The weighted height introduced in
Subsection~\ref{subsec:exceptional-common} is
\[
h(\beta)
=
b_1+2b_2+2b_3+3b_4+2b_5+b_6
\]
for $\beta=\sum_{j=1}^6 b_j\alpha_j$.

Let $\xi=e^{2\pi i/24}$. 
In
this case
\[
F(m)
=
\frac{1+\xi^m}{1-\xi^m}
=
i\cot\left(\frac{m\pi}{24}\right),
\qquad
m\not\equiv0\pmod{24}.
\]

For later use we also record explicitly the simple-root coordinates.
If
\[
\beta=x_1\epsilon_1+\cdots+x_6\epsilon_6,
\]
then solving
\[
\beta=\sum_{\ell=1}^6 b_\ell(\beta)\alpha_\ell
\]
gives
\[
b_1
=
x_1+\frac{x_6}{\sqrt3},
\qquad
b_6
=
\frac{2x_6}{\sqrt3},
\]
\[
b_3
=
x_1+x_2+\frac{2x_6}{\sqrt3},
\qquad
b_4
=
x_1+x_2+x_3+\sqrt3\,x_6,
\]
\[
b_2
=
\frac12
\left(
x_1+x_2+x_3+x_4-x_5+\sqrt3\,x_6
\right),
\]
and
\[
b_5
=
\frac12
\left(
x_1+x_2+x_3+x_4+x_5+\frac{5x_6}{\sqrt3}
\right).
\]
For every \(\beta\in R\), these numbers are integers, and the positive
roots are precisely those for which
\[
b_\ell(\beta)\ge0,
\qquad
1\le\ell\le6.
\]

\medskip

\noindent\textbf{The Bryan--Gholampour matrix.}
From the character table,
\[
\chi_V(g_1)=-2,
\qquad
\chi_V(g_2)=0,
\]
\[
\chi_V(g_3)=\chi_V(g_4)=1,
\qquad
\chi_V(g_5)=\chi_V(g_6)=-1.
\]
We choose
\[
\sqrt{-4}=2i,
\qquad
\sqrt{-2}=\sqrt2\,i,
\qquad
\sqrt{-1}=i,
\qquad
\sqrt{-3}=\sqrt3\,i.
\]
Thus, in the ordered bases
\[
\alpha_1,\ldots,\alpha_6
\qquad\text{and}\qquad
\mathbf1_{[1]},\ldots,\mathbf1_{[6]},
\]
the matrix \(M\) of the Bryan--Gholampour transformation is
\[
M
=
i
\begin{pmatrix}
2&-4&-4&6&-4&2\\
\sqrt2&0&0&-\sqrt2&0&\sqrt2\\
\omega&1&\omega&0&\omega^2&\omega^2\\
\omega^2&1&\omega^2&0&\omega&\omega\\
\sqrt3\,\omega&-\sqrt3&-\sqrt3\,\omega&0&
-\sqrt3\,\omega^2&\sqrt3\,\omega^2\\
\sqrt3\,\omega^2&-\sqrt3&-\sqrt3\,\omega^2&0&
-\sqrt3\,\omega&\sqrt3\,\omega
\end{pmatrix}.
\]
The matrices \(N^{(k)}\), \(1\le k\le6\), are obtained from the
preceding character table by the Burnside formula of Remark~\ref{rmk_Burnside}.

\medskip

\noindent\textbf{Exact cyclotomic verification.}
The relevant cyclotomic field is
\[
\mathbb K
=
\mathbb Q(\xi)
\cong
\mathbb Q[X]/\bigl(\Phi_{24}(X)\bigr),
\]
where
\[
\Phi_{24}(X)=X^8-X^4+1.
\]
All quantities entering the matrices \(Q^{(k)}\) belong to
\(\mathbb K\).  Explicitly,
\[
i=\xi^6,
\qquad
\omega=\xi^8,
\]
\[
\sqrt2=\xi^3+\xi^{-3},
\qquad
\sqrt3=\xi^2+\xi^{-2},
\]
and
\[
F(h)
=
\frac{1+\xi^h}{1-\xi^h}.
\]

The coordinate realization above gives all \(36\) positive roots
explicitly.  For each of them we compute
\[
b(\beta),
\qquad
h(\beta),
\qquad
p(\beta)=Cb(\beta),
\qquad
Mb(\beta),
\]
and hence every entry of the six matrices \(Q^{(k)}\).

According to the notation of
Subsection~\ref{subsec:exceptional-common}, there are
\[
6\binom72=126
\]
differences
\[
D_{ij}^{(k)}
=
Q_{ij}^{(k)}
-
\left(M^{\mathsf T}N^{(k)}M\right)_{ij},
\qquad
1\le i\le j\le6,
\quad
1\le k\le6.
\]
Replacing \(i,\omega,\sqrt2,\sqrt3\) and every
\(F(h(\beta))\) by the expressions above in \(\xi\), clearing the
nonzero denominators \(1-\xi^{h(\beta)}\), and reducing the resulting
numerators modulo
\[
\Phi_{24}(X)=X^8-X^4+1
\]
gives zero for every triple \((i,j,k)\).  Therefore
\[
Q^{(k)}
=
M^{\mathsf T}N^{(k)}M,
\qquad
1\le k\le6.
\]

We record one representative computation as a check on the
normalizations.  For \((i,j)=(4,4)\), the exact root sum gives
\[
\alpha_4*\alpha_4
=
-48t^2
+
it\left[
(5-2\sqrt2)(\alpha_2+\alpha_3+\alpha_5)
+
(12-4\sqrt2)\alpha_4
\right].
\]
Applying \(L\) gives
\[
L(\alpha_4*\alpha_4)
=
-48t^2
-12t\,\mathbf1_{[1]}
+
(12\sqrt2-8)t\,\mathbf1_{[2]}.
\]
On the other hand,
\[
L(\alpha_4)
=
i\left(
6\mathbf1_{[1]}
-
\sqrt2\,\mathbf1_{[2]}
\right).
\]
The class-algebra coefficients obtained from the character table give
\[
\mathbf1_{[1]}
\cup_{\mathbb C^*,\mathrm{CR}}
\mathbf1_{[1]}
=
t^2,
\]
\[
\mathbf1_{[1]}
\cup_{\mathbb C^*,\mathrm{CR}}
\mathbf1_{[2]}
=
t\,\mathbf1_{[2]},
\]
and
\[
\mathbf1_{[2]}
\cup_{\mathbb C^*,\mathrm{CR}}
\mathbf1_{[2]}
=
6t^2+
\left(
6\mathbf1_{[1]}+4\mathbf1_{[2]}
\right)t.
\]
Consequently,
\[
\begin{aligned}
L(\alpha_4)
\cup_{\mathbb C^*,\mathrm{CR}}
L(\alpha_4)
&=
-
\left(
6\mathbf1_{[1]}
-\sqrt2\,\mathbf1_{[2]}
\right)^2\\
&=
-48t^2
-12t\,\mathbf1_{[1]}
+
(12\sqrt2-8)t\,\mathbf1_{[2]},
\end{aligned}
\]
in exact agreement with \(L(\alpha_4*\alpha_4)\).

Hence the identities of
Lemma~\ref{lmm:exceptional-matrix-criterion} hold in type \(E_6\).
By Proposition~\ref{prop:uniform-reduction}, this proves the \(E_6\) case of
Theorem~\ref{thm:exceptional}.

\subsection{Type \(E_7\)}
\label{subsec:E7}

We next apply the common scheme of
Subsection~\ref{subsec:exceptional-common} to the binary octahedral
group.

Let
\[
G=\mathcal O
\]
be the binary octahedral group of order \(48\).  We use the quaternionic
embedding into \(\mathrm{SL}_2(\mathbb C)\) fixed in
Subsection~\ref{subsec:E6}.  Equivalently, \(\mathcal O\) is the inverse
image of the rotational symmetry group of the cube under the double
covering $\mathrm{SU}(2)\longrightarrow\mathrm{SO}(3)$.

We order the eight conjugacy classes by choosing representatives
\[
g_0=e,
\qquad
g_1=-e,
\qquad
g_2=i,
\qquad
g_3=\frac{i+j}{\sqrt2},
\]
\[
g_4=\frac{e+i}{\sqrt2},
\qquad
g_5=\frac{-e+i}{\sqrt2},
\qquad
g_6=\frac{e+i+j+k}{2},
\qquad
g_7=\frac{-e+i+j+k}{2},
\]
and setting
\[
C_r=\operatorname{Cl}(g_r),
\qquad
0\le r\le7.
\]
Their respective sizes are
\[
(|C_0|,\ldots,|C_7|)
=
(1,1,6,12,6,6,8,8).
\]
As before, we write
\[
[r]:=C_r,
\qquad
0\le r\le7.
\]

\medskip

\noindent\textbf{Character and McKay data.}
With the ordering above, the irreducible character table of
\(\mathcal O\) is
\[
\begin{array}{c|c|cccccccc}
 & |C_r|
 & \chi_0 & \chi_1 & \chi_2 & \chi_3
 & \chi_4 & \chi_5 & \chi_6 & \chi_7 \\ \hline
C_0 & 1
& 1&2&2&3&4&3&2&1\\
C_1 & 1
& 1&-2&2&3&-4&3&-2&1\\
C_2 & 6
& 1&0&2&-1&0&-1&0&1\\
C_3 & 12
& 1&0&0&-1&0&1&0&-1\\
C_4 & 6
& 1&\sqrt2&0&1&0&-1&-\sqrt2&-1\\
C_5 & 6
& 1&-\sqrt2&0&1&0&-1&\sqrt2&-1\\
C_6 & 8
& 1&1&-1&0&-1&0&1&1\\
C_7 & 8
& 1&-1&-1&0&1&0&-1&1
\end{array}
\]
Here \(\chi_0\) is the trivial character and $\chi_1$ is the character of the natural two-dimensional representation $V=\rho_1$. The displayed
characters form the complete set of irreducible characters of
\(\mathcal O\); see also the standard character orthogonality relations
in \cite[Sec.~2.3]{Serre77}.

Tensoring all the irreducible representations by the natural representation gives
\[
V\otimes\rho_0\simeq\rho_1,
\]
\[
V\otimes\rho_1\simeq\rho_0\oplus\rho_3,
\qquad
V\otimes\rho_2\simeq\rho_4,
\]
\[
V\otimes\rho_3\simeq\rho_1\oplus\rho_4,
\]
\[
V\otimes\rho_4\simeq\rho_2\oplus\rho_3\oplus\rho_5,
\]
\[
V\otimes\rho_5\simeq\rho_4\oplus\rho_6,
\]
\[
V\otimes\rho_6\simeq\rho_5\oplus\rho_7,
\qquad
V\otimes\rho_7\simeq\rho_6.
\]
Thus the affine McKay diagram is of type \(\widetilde E_7\);
see \cite{McKay}.  We use the McKay labelling
\[
\rho_j\longleftrightarrow\alpha_j,
\qquad
1\le j\le7.
\]
Hence
\[
(d_1,\ldots,d_7)
=
(2,2,3,4,3,2,1).
\]

Let $\{ \epsilon_1,\ldots,\epsilon_8 \}$ 
be the standard orthonormal basis of \(\mathbb C^8\), equipped with
the standard complex bilinear form.  We use the simple roots
\[
\alpha_1
=
\frac12
\left(
\epsilon_1-\epsilon_2-\epsilon_3-\epsilon_4
-\epsilon_5-\epsilon_6-\epsilon_7+\epsilon_8
\right),
\]
\[
\alpha_2=\epsilon_1+\epsilon_2,
\qquad
\alpha_3=\epsilon_2-\epsilon_1,
\]
\[
\alpha_4=\epsilon_3-\epsilon_2,
\qquad
\alpha_5=\epsilon_4-\epsilon_3,
\]
\[
\alpha_6=\epsilon_5-\epsilon_4,
\qquad
\alpha_7=\epsilon_6-\epsilon_5.
\]
Their Gram matrix is the finite Cartan matrix
\[
C=
\bigl(\langle\alpha_i,\alpha_j\rangle\bigr)_{1\le i,j\le7}
=
\begin{pmatrix}
2&0&-1&0&0&0&0\\
0&2&0&-1&0&0&0\\
-1&0&2&-1&0&0&0\\
0&-1&-1&2&-1&0&0\\
0&0&0&-1&2&-1&0\\
0&0&0&0&-1&2&-1\\
0&0&0&0&0&-1&2
\end{pmatrix}.
\]

\medskip

\noindent\textbf{Roots and weighted height.}
We use the following standard coordinate realization of the \(E_7\)-root
system; cf.~\cite{Humphreys}:
\[
R=R_0\sqcup R_1,
\]
where
\[
R_0
=
\left\{
\pm\epsilon_a\pm\epsilon_b
\;\middle|\;
1\le a<b\le6
\right\}
\cup
\left\{
\pm(\epsilon_7-\epsilon_8)
\right\},
\]
and
\[
R_1
=
\left\{
\frac12\sum_{a=1}^8\delta_a\epsilon_a
\;\middle|\;
\begin{array}{c}
\delta_a\in\{\pm1\},\\
\delta_7=-\delta_8,\\
\delta_1\cdots\delta_6=-1
\end{array}
\right\}.
\]
Thus
\[
|R_0|=62,
\qquad
|R_1|=64,
\qquad
|R|=126,
\qquad
|R^+|=63.
\]

The weighted height introduced in
Subsection~\ref{subsec:exceptional-common} is
\[
h(\beta)
=
2b_1+2b_2+3b_3+4b_4+3b_5+2b_6+b_7
\]
for $\beta=\sum_{j=1}^7b_j\alpha_j$.

For this case, $\xi=e^{2\pi i/48}$,
 and, as a reminder,
\[
F(m)
=
\frac{1+\xi^m}{1-\xi^m}
=
i\cot\left(\frac{m\pi}{48}\right),
\qquad
m\not\equiv0\pmod{48}.
\]

For later use we record the simple-root coordinates explicitly.
Every vector in the \(E_7\)-root space has the form
\[
x=x_1\epsilon_1+\cdots+x_8\epsilon_8,
\qquad
x_7+x_8=0.
\]
Writing
\[
x=\sum_{j=1}^7b_j\alpha_j,
\]
one obtains
\[
b_1=2x_8,
\]
\[
b_2
=
\frac{
x_1+x_2+x_3+x_4+x_5+x_6+4x_8
}{2},
\]
\[
b_3
=
\frac{
-x_1+x_2+x_3+x_4+x_5+x_6+6x_8
}{2},
\]
\[
b_4=x_3+x_4+x_5+x_6+4x_8,
\]
\[
b_5=x_4+x_5+x_6+3x_8,
\]
\[
b_6=x_5+x_6+2x_8,
\qquad
b_7=x_6+x_8.
\]
For every root \(\beta\in R\), these numbers are integers, and
\[
\beta\in R^+
\quad\Longleftrightarrow\quad
b_j(\beta)\ge0,
\qquad
1\le j\le7.
\]

\medskip

\noindent\textbf{The Bryan--Gholampour matrix.}
From the character table,
\[
\chi_V(g_1)=-2,
\qquad
\chi_V(g_2)=\chi_V(g_3)=0,
\]
\[
\chi_V(g_4)=\sqrt2,
\qquad
\chi_V(g_5)=-\sqrt2,
\qquad
\chi_V(g_6)=1,
\qquad
\chi_V(g_7)=-1.
\]
We choose
\[
\sqrt{-4}=2i,
\qquad
\sqrt{-2}=\sqrt2\,i,
\]
\[
\sqrt{\sqrt2-2}
=
i\sqrt{2-\sqrt2},
\qquad
\sqrt{-\sqrt2-2}
=
i\sqrt{2+\sqrt2},
\]
and
\[
\sqrt{-1}=i,
\qquad
\sqrt{-3}=\sqrt3\,i.
\]

Thus, in the ordered bases
\[
\alpha_1,\ldots,\alpha_7
\qquad\text{and}\qquad
\mathbf1_{[1]},\ldots,\mathbf1_{[7]},
\]
the matrix \(M\) of the Bryan--Gholampour transformation is
\[
M
=
i
\begin{pmatrix}
-4&4&6&-8&6&-4&2\\
0&2\sqrt2&-\sqrt2&0&-\sqrt2&0&\sqrt2\\
0&0&-\sqrt2&0&\sqrt2&0&-\sqrt2\\
\sqrt{4-2\sqrt2}&0&\sqrt{2-\sqrt2}&0&
-\sqrt{2-\sqrt2}&-\sqrt{4-2\sqrt2}&-\sqrt{2-\sqrt2}\\
-\sqrt{4+2\sqrt2}&0&\sqrt{2+\sqrt2}&0&
-\sqrt{2+\sqrt2}&\sqrt{4+2\sqrt2}&-\sqrt{2+\sqrt2}\\
1&-1&0&-1&0&1&1\\
-\sqrt3&-\sqrt3&0&\sqrt3&0&-\sqrt3&\sqrt3
\end{pmatrix}.
\]
The matrices \(N^{(k)}\), \(1\le k\le7\), are determined from the
preceding character table by the Burnside formula of Remark~\ref{rmk_Burnside}.

\medskip

\noindent\textbf{Exact cyclotomic verification.}
The relevant cyclotomic field is
\[
\mathbb K
=
\mathbb Q(\xi)
\cong
\mathbb Q[X]/\bigl(\Phi_{48}(X)\bigr),
\]
where
\[
\Phi_{48}(X)=X^{16}-X^8+1.
\]
All quantities entering the matrices \(Q^{(k)}\) belong to
\(\mathbb K\).  Indeed,
\[
i=\xi^{12},
\qquad
\sqrt2=\xi^6+\xi^{-6},
\qquad
\sqrt3=\xi^4+\xi^{-4},
\]
\[
\sqrt{2-\sqrt2}
=
-i(\xi^3-\xi^{-3}),
\qquad
\sqrt{2+\sqrt2}
=
\xi^3+\xi^{-3},
\]
and we also have
\[
\sqrt{4-2\sqrt2}
=
\sqrt2\,\sqrt{2-\sqrt2},
\qquad
\sqrt{4+2\sqrt2}
=
\sqrt2\,\sqrt{2+\sqrt2}.
\]
Moreover,
\[
F(m)
=
\frac{1+\xi^m}{1-\xi^m}.
\]

The preceding coordinate realization gives all \(126\) roots
explicitly.  The \(63\) positive roots are obtained by retaining
precisely those roots for which
\[
b_j(\beta)\ge0,
\qquad
1\le j\le7.
\]
For every such root we compute
\[
b(\beta),
\qquad
h(\beta)
=
2b_1(\beta)+2b_2(\beta)+3b_3(\beta)+4b_4(\beta)
+3b_5(\beta)+2b_6(\beta)+b_7(\beta),
\]
\[
p(\beta)=Cb(\beta),
\qquad
Mb(\beta).
\]
Hence every entry of every \(Q^{(k)}\) is an explicit finite sum in
\(\mathbb K\).

There are
\[
7\binom82=196
\]
differences
\[
D_{ij}^{(k)}
=
Q_{ij}^{(k)}
-
\left(M^{\mathsf T}N^{(k)}M\right)_{ij},
\qquad
1\le i\le j\le7,
\quad
1\le k\le7.
\]
Replacing the radicals and every \(F(h(\beta))\) by the expressions
above in \(\xi\), clearing the nonzero denominators
\[
1-\xi^{h(\beta)},
\]
and reducing the resulting numerators modulo
\[
\Phi_{48}(X)=X^{16}-X^8+1
\]
gives zero for every triple
\[
(i,j,k),
\qquad
1\le i\le j\le7,
\quad
1\le k\le7.
\]
Therefore
\[
Q^{(k)}
=
M^{\mathsf T}N^{(k)}M,
\qquad
1\le k\le7.
\]

We record one representative entry.  Put
\[
c_m
:=
\cot\left(\frac{m\pi}{48}\right).
\]
For
\[
(i,j,k)=(1,6,1),
\]
grouping the contributing positive roots according to their weighted
height gives
\[
Q_{1,6}^{(1)}
=
-4c_{10}-8c_{12}+12c_{14}-8c_{18}
+8c_{20}-4c_{22}+4c_{26}.
\]
Since
\[
c_m
=
-i\,\frac{1+\xi^m}{1-\xi^m},
\]
reduction in \(\mathbb K\) gives
\[
Q_{1,6}^{(1)}
=
-24\sqrt2+16\sqrt3.
\]
On the Chen--Ruan side, the same exact computation using the Burnside
formula and the matrix \(M\) gives
\[
\left(M^{\mathsf T}N^{(1)}M\right)_{1,6}
=
-24\sqrt2+16\sqrt3.
\]
Thus this entry agrees, and the preceding exact cyclotomic reduction
proves all the remaining scalar identities.

Hence the identities of
Lemma~\ref{lmm:exceptional-matrix-criterion} hold in type \(E_7\).
By Proposition~\ref{prop:uniform-reduction}, this proves the \(E_7\) case of
Theorem~\ref{thm:exceptional}.

\subsection{Type \(E_8\)}
\label{subsec:E8}

We finally apply the common scheme of
Subsection~\ref{subsec:exceptional-common} to the binary icosahedral
group.

Let
\[
G=\mathcal J
\]
be the binary icosahedral group of order \(120\).  We regard
\(\mathcal J\) as the inverse image of the rotational icosahedral group
\[
A_5\subset\mathrm{SO}(3)
\]
under the double covering $\mathrm{SU}(2)\longrightarrow\mathrm{SO}(3)$. 
Thus $\mathcal J/\{\pm e\}\simeq A_5$. 

Set
\[
\phi:=\frac{1+\sqrt5}{2},
\qquad
\bar\phi:=\frac{1-\sqrt5}{2}.
\]
Then
\[
\phi+\bar\phi=1,
\qquad
\phi\bar\phi=-1,
\qquad
\phi^{-1}=-\bar\phi.
\]

We use the following ordering of the conjugacy classes.  Choose
representatives
\[
g_0=e,
\qquad
g_1=-e,
\qquad
g_2=
\begin{pmatrix}
i&0\\
0&-i
\end{pmatrix},
\]
\[
g_3
=
\frac12
\begin{pmatrix}
1+i&1+i\\
-1+i&1-i
\end{pmatrix},
\qquad
g_4=-g_3,
\]
\[
g_5
=
\frac12
\begin{pmatrix}
\phi+i&\phi^{-1}\\
-\phi^{-1}&\phi-i
\end{pmatrix},
\qquad
g_6=-g_5,
\]
and
\[
g_7
=
\frac12
\begin{pmatrix}
-\phi^{-1}+i&\phi\\
-\phi&-\phi^{-1}-i
\end{pmatrix},
\qquad
g_8=-g_7.
\]
Set
\[
C_r=\operatorname{Cl}(g_r),
\qquad
0\le r\le8.
\]
Their respective sizes are
\[
(|C_0|,\ldots,|C_8|)
=
(1,1,30,20,20,12,12,12,12).
\]
As before, we write
\[
[r]:=C_r,
\qquad
0\le r\le8.
\]

\medskip

\noindent\textbf{Character and McKay data.}
With the ordering above, the irreducible character table of
\(\mathcal J\) is
\[
\begin{array}{c|c|ccccccccc}
 & |C_r|
 & \chi_0 & \chi_1 & \chi_2 & \chi_3 & \chi_4
 & \chi_5 & \chi_6 & \chi_7 & \chi_8 \\ \hline
C_0 & 1
& 1&2&3&4&6&5&4&3&2\\
C_1 & 1
& 1&-2&3&4&-6&5&-4&3&-2\\
C_2 & 30
& 1&0&-1&0&0&1&0&-1&0\\
C_3 & 20
& 1&1&0&1&0&-1&-1&0&1\\
C_4 & 20
& 1&-1&0&1&0&-1&1&0&-1\\
C_5 & 12
& 1&\bar\phi&\bar\phi&-1&-1&0&1&\phi&\phi\\
C_6 & 12
& 1&-\bar\phi&\bar\phi&-1&1&0&-1&\phi&-\phi\\
C_7 & 12
& 1&\phi&\phi&-1&-1&0&1&\bar\phi&\bar\phi\\
C_8 & 12
& 1&-\phi&\phi&-1&1&0&-1&\bar\phi&-\bar\phi
\end{array}.
\]
Here \(\chi_0\) is the trivial character and $\chi_8$ is the character of the natural two-dimensional representation $V=\rho_8$. The displayed
characters form the complete set of irreducible characters of
\(\mathcal J\); see also the standard character orthogonality relations
in \cite[Sec.~2.3]{Serre77}.

Tensoring all the irreducible representations by the natural representation gives
\[
V\otimes\rho_0\simeq\rho_8,
\]
\[
V\otimes\rho_1\simeq\rho_3,
\qquad
V\otimes\rho_2\simeq\rho_4,
\]
\[
V\otimes\rho_3\simeq\rho_1\oplus\rho_4,
\]
\[
V\otimes\rho_4\simeq\rho_2\oplus\rho_3\oplus\rho_5,
\]
\[
V\otimes\rho_5\simeq\rho_4\oplus\rho_6,
\]
\[
V\otimes\rho_6\simeq\rho_5\oplus\rho_7,
\]
\[
V\otimes\rho_7\simeq\rho_6\oplus\rho_8,
\]
\[
V\otimes\rho_8\simeq\rho_0\oplus\rho_7.
\]
Thus the affine McKay diagram is of type \(\widetilde E_8\);
see \cite{McKay}.  We use the McKay labelling
\[
\rho_j\longleftrightarrow\alpha_j,
\qquad
1\le j\le8.
\]
Hence
\[
(d_1,\ldots,d_8)
=
(2,3,4,6,5,4,3,2).
\]

Let $\{ \epsilon_1,\ldots,\epsilon_8 \} $
be the standard orthonormal basis of \(\mathbb C^8\), equipped with
the standard complex bilinear form.  We use the simple roots
\[
\alpha_1
=
\frac12
\left(
\epsilon_1-\epsilon_2-\epsilon_3-\epsilon_4
-\epsilon_5-\epsilon_6-\epsilon_7+\epsilon_8
\right),
\]
\[
\alpha_2=\epsilon_1+\epsilon_2,
\qquad
\alpha_3=\epsilon_2-\epsilon_1,
\]
\[
\alpha_4=\epsilon_3-\epsilon_2,
\qquad
\alpha_5=\epsilon_4-\epsilon_3,
\]
\[
\alpha_6=\epsilon_5-\epsilon_4,
\qquad
\alpha_7=\epsilon_6-\epsilon_5,
\qquad
\alpha_8=\epsilon_7-\epsilon_6.
\]
Their Gram matrix is the finite Cartan matrix
\[
C
=
\bigl(\langle\alpha_i,\alpha_j\rangle\bigr)_{1\le i,j\le8}
=
\begin{pmatrix}
2&0&-1&0&0&0&0&0\\
0&2&0&-1&0&0&0&0\\
-1&0&2&-1&0&0&0&0\\
0&-1&-1&2&-1&0&0&0\\
0&0&0&-1&2&-1&0&0\\
0&0&0&0&-1&2&-1&0\\
0&0&0&0&0&-1&2&-1\\
0&0&0&0&0&0&-1&2
\end{pmatrix}.
\]

\medskip

\noindent\textbf{Roots and weighted height.}
We use the standard coordinate realization of the \(E_8\)-root system;
cf.~\cite{Humphreys}:
\[
R=R_0\sqcup R_1,
\]
where
\[
R_0
=
\left\{
\pm\epsilon_a\pm\epsilon_b
\;\middle|\;
1\le a<b\le8
\right\},
\]
and
\[
R_1
=
\left\{
\frac12\sum_{a=1}^8\delta_a\epsilon_a
\;\middle|\;
\begin{array}{c}
\delta_a\in\{\pm1\},\\
\displaystyle\prod_{a=1}^8\delta_a=1
\end{array}
\right\}.
\]
Thus
\[
|R_0|=112,
\qquad
|R_1|=128,
\qquad
|R|=240,
\qquad
|R^+|=120.
\]

The weighted height introduced in
Subsection~\ref{subsec:exceptional-common} is
\[
h(\beta)
=
2b_1+3b_2+4b_3+6b_4+5b_5+4b_6+3b_7+2b_8
\]
for $\beta=\sum_{j=1}^8b_j\alpha_j$.

For this case, $\xi=e^{2\pi i/120}$, 
and, as a reminder,
\[
F(m)
=
\frac{1+\xi^m}{1-\xi^m}
=
i\cot\left(\frac{m\pi}{120}\right),
\qquad
m\not\equiv0\pmod{120}.
\]

For later use we record the simple-root coordinates explicitly.  Let
\[
x=x_1\epsilon_1+\cdots+x_8\epsilon_8\in\mathbb C^8
\]
and write
\[
x=\sum_{j=1}^8b_j\alpha_j.
\]
Solving the simple-root equations gives
\[
b_1=2x_8,
\]
\[
b_2
=
\frac{
x_1+x_2+x_3+x_4+x_5+x_6+x_7+5x_8
}{2},
\]
\[
b_3
=
\frac{
-x_1+x_2+x_3+x_4+x_5+x_6+x_7+7x_8
}{2},
\]
\[
b_4=x_3+x_4+x_5+x_6+x_7+5x_8,
\]
\[
b_5=x_4+x_5+x_6+x_7+4x_8,
\]
\[
b_6=x_5+x_6+x_7+3x_8,
\]
\[
b_7=x_6+x_7+2x_8,
\qquad
b_8=x_7+x_8.
\]
For every root \(\beta\in R\), these numbers are integers, and
\[
\beta\in R^+
\quad\Longleftrightarrow\quad
b_j(\beta)\ge0,
\qquad
1\le j\le8.
\]

\medskip

\noindent\textbf{The Bryan--Gholampour matrix.}
Recall that
\[
V=\rho_8.
\]
From the character table,
\[
\chi_V(g_1)=-2,
\qquad
\chi_V(g_2)=0,
\]
\[
\chi_V(g_3)=1,
\qquad
\chi_V(g_4)=-1,
\]
\[
\chi_V(g_5)=\phi,
\qquad
\chi_V(g_6)=-\phi,
\]
and
\[
\chi_V(g_7)=\bar\phi,
\qquad
\chi_V(g_8)=-\bar\phi.
\]

For brevity, set
\[
\kappa_-:=\sqrt{2-\phi},
\qquad
\kappa_+:=\sqrt{2+\phi},
\]
and
\[
\mu_-:=\sqrt{2-\bar\phi},
\qquad
\mu_+:=\sqrt{2+\bar\phi},
\]
where the positive real square roots are chosen.  We choose
\[
\sqrt{-4}=2i,
\qquad
\sqrt{-2}=\sqrt2\,i,
\qquad
\sqrt{-1}=i,
\qquad
\sqrt{-3}=\sqrt3\,i,
\]
and
\[
\sqrt{\phi-2}=i\kappa_-,
\qquad
\sqrt{-\phi-2}=i\kappa_+,
\]
\[
\sqrt{\bar\phi-2}=i\mu_-,
\qquad
\sqrt{-\bar\phi-2}=i\mu_+.
\]

Thus, in the ordered bases
\[
\alpha_1,\ldots,\alpha_8
\qquad\text{and}\qquad
\mathbf1_{[1]},\ldots,\mathbf1_{[8]},
\]
the matrix \(M\) of the Bryan--Gholampour transformation is
\[
M
=
i
\begin{pmatrix}
-4&6&8&-12&10&-8&6&-4\\
0&-\sqrt2&0&0&\sqrt2&0&-\sqrt2&0\\
1&0&1&0&-1&-1&0&1\\
-\sqrt3&0&\sqrt3&0&-\sqrt3&\sqrt3&0&-\sqrt3\\
\bar\phi\kappa_-&\bar\phi\kappa_-&-\kappa_-&-\kappa_-&
0&\kappa_-&\phi\kappa_-&\phi\kappa_-\\
-\bar\phi\kappa_+&\bar\phi\kappa_+&-\kappa_+&\kappa_+&
0&-\kappa_+&\phi\kappa_+&-\phi\kappa_+\\
\phi\mu_-&\phi\mu_-&-\mu_-&-\mu_-&
0&\mu_-&\bar\phi\mu_-&\bar\phi\mu_-\\
-\phi\mu_+&\phi\mu_+&-\mu_+&\mu_+&
0&-\mu_+&\bar\phi\mu_+&-\bar\phi\mu_+
\end{pmatrix}.
\]
The matrices \(N^{(k)}\), \(1\le k\le8\), are determined from the
preceding character table by the Burnside formula of Remark~\ref{rmk_Burnside}.

\medskip

\noindent\textbf{Exact cyclotomic verification.}
The relevant cyclotomic field is
\[
\mathbb K
=
\mathbb Q(\xi)
\cong
\mathbb Q[X]/\bigl(\Phi_{120}(X)\bigr),
\]
where
\[
\Phi_{120}(X)
=
X^{32}+X^{28}-X^{20}-X^{16}-X^{12}+X^4+1.
\]
All quantities entering the matrices \(Q^{(k)}\) belong to
\(\mathbb K\).  Indeed,
\[
i=\xi^{30},
\qquad
\sqrt2=\xi^{15}+\xi^{-15},
\qquad
\sqrt3=\xi^{10}+\xi^{-10},
\]
and
\[
\phi=\xi^{12}+\xi^{-12},
\qquad
\bar\phi=1-\phi.
\]
Furthermore,
\[
\kappa_-
=
\sqrt{2-\phi}
=
\phi-1,
\]
\[
\kappa_+
=
\sqrt{2+\phi}
=
\xi^6+\xi^{-6},
\]
\[
\mu_-
=
\sqrt{2-\bar\phi}
=
\phi,
\]
and
\[
\mu_+
=
\sqrt{2+\bar\phi}
=
-i(\xi^{12}-\xi^{-12}).
\]
Thus every entry of \(M\) belongs to \(\mathbb K\).  Moreover,
\[
F(m)
=
\frac{1+\xi^m}{1-\xi^m}.
\]

The preceding coordinate realization gives all \(240\) roots
explicitly.  The \(120\) positive roots are obtained by retaining
precisely those roots for which
\[
b_j(\beta)\ge0,
\qquad
1\le j\le8.
\]
For every such root we compute
\[
b(\beta),
\]
\[
h(\beta)
=
2b_1(\beta)+3b_2(\beta)+4b_3(\beta)+6b_4(\beta)
+5b_5(\beta)+4b_6(\beta)+3b_7(\beta)+2b_8(\beta),
\]
\[
p(\beta)=Cb(\beta),
\qquad
Mb(\beta).
\]
Hence every entry of every \(Q^{(k)}\) is an explicit finite sum in
\(\mathbb K\).

There are
\[
8\binom92=288
\]
differences
\[
D_{ij}^{(k)}
=
Q_{ij}^{(k)}
-
\left(M^{\mathsf T}N^{(k)}M\right)_{ij},
\qquad
1\le i\le j\le8,
\quad
1\le k\le8.
\]
Replacing every radical and every \(F(h(\beta))\) by the expressions
above in \(\xi\), clearing the nonzero denominators
\[
1-\xi^{h(\beta)},
\]
and reducing the resulting numerators modulo
\[
\Phi_{120}(X)
=
X^{32}+X^{28}-X^{20}-X^{16}-X^{12}+X^4+1
\]
gives zero for every triple
\[
(i,j,k),
\qquad
1\le i\le j\le8,
\quad
1\le k\le8.
\]
Therefore
\[
Q^{(k)}
=
M^{\mathsf T}N^{(k)}M,
\qquad
1\le k\le8.
\]

We record one non-trivial entry to illustrate the exact calculation.
Put
\[
c_m
:=
\cot\left(\frac{m\pi}{120}\right).
\]
For
\[
(i,j,k)=(1,4,2),
\]
grouping the contributing positive roots according to their weighted
height gives
\[
Q_{1,4}^{(2)}
=
\sqrt2
\left(
c_{25}-c_{29}-c_{31}+c_{35}
\right).
\]
On the Chen--Ruan side, direct multiplication of \(M\) and \(N^{(2)}\)
gives
\[
\left(M^{\mathsf T}N^{(2)}M\right)_{1,4}
=
-2\left[
(\kappa_--\kappa_+)(\mu_+-\mu_-)
+
(\sqrt3-1)
(\kappa_--\kappa_++\mu_--\mu_+)
\right].
\]
Writing both expressions in terms of \(\xi\), using
\[
c_m
=
-i\,\frac{1+\xi^m}{1-\xi^m},
\]
and the formulas for
\[
\kappa_-,
\quad
\kappa_+,
\quad
\mu_-,
\quad
\mu_+
\]
above, their difference has zero remainder modulo
\(\Phi_{120}(X)\).  Hence
\[
Q_{1,4}^{(2)}
=
\left(M^{\mathsf T}N^{(2)}M\right)_{1,4}.
\]
The same exact reduction gives all the remaining scalar identities.

Hence the identities of
Lemma~\ref{lmm:exceptional-matrix-criterion} hold in type \(E_8\).
By Proposition~\ref{prop:uniform-reduction}, this proves the \(E_8\) case of
Theorem~\ref{thm:exceptional}.

Together with the \(E_6\) and \(E_7\) cases proved above, this completes
the proof of Theorem~7.1.

\appendix

\section{A finite Fourier--cotangent identity}
\label{app:fourier-cotangent}
%----------------------------------------------------------
%\subsection{A finite Fourier--cotangent identity}
%----------------------------------------------------------
For an integer $n\ge 2$, let
\[
\zeta=e^{2\pi i/n},
\qquad
\theta=\frac{\pi}{n}.
\]
First of all, let us record the elementary identity
\[
\frac{1+\zeta^m}{1-\zeta^m}
=
i\cot(m\theta),
\qquad
1\le m\le n-1.
\]

The following classical finite Fourier–cotangent identity is used in the proofs of the cyclic and binary dihedral cases. 
For more general root-of-unity weighted trigonometric sums and further references, see \cite{LiuXin}. 
We include a short elementary proof for completeness.

\begin{lmm}\label{lem:cot-fourier}
With $\zeta$ and $\theta$ as above, for every \(1\leq r\leq n-1\),
\[
\sum_{m=1}^{n-1}
\zeta^{rm}\cot(m\theta)
=
i(n-2r).
\]
\end{lmm}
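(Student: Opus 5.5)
Replace the cotangent by its root-of-unity form and reduce everything to finite geometric sums. By the identity recorded just before the lemma, $\cot(m\theta) = -i\,\frac{1+\zeta^m}{1-\zeta^m}$ for $1\le m\le n-1$. The claim is therefore equivalent to
\[
S_r:=\sum_{m=1}^{n-1}\zeta^{rm}\,\frac{1+\zeta^m}{1-\zeta^m}=-(n-2r),\qquad 1\le r\le n-1 .
\]
We write $\frac{1+\zeta^m}{1-\zeta^m}=-1+\frac{2}{1-\zeta^m}$. Since $\sum_{m=1}^{n-1}\zeta^{rm}=-1$ by \eqref{ORC} (because $r\not\equiv 0$), this gives
\[
S_r = 1+2\,U_r,\qquad U_r:=\sum_{m=1}^{n-1}\frac{\zeta^{rm}}{1-\zeta^m}.
\]
It then suffices to show $U_r=r-\frac{n+1}{2}$, since then $S_r=1+2r-n-1=2r-n$.

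To compute $U_r$, we use a recursion in $r$. For $1\le r\le n-2$,
\[
U_r-U_{r+1}=\sum_{m=1}^{n-1}\zeta^{rm}\frac{1-\zeta^m}{1-\zeta^m}=\sum_{m=1}^{n-1}\zeta^{rm}=-1,
\]
again by \eqref{ORC}, so $U_r$ increases by exactly $1$ at each step. For the base case take $r=0$: $U_0=\sum_{m=1}^{n-1}\frac{1}{1-\zeta^m}$. Pairing $m$ with $n-m$ and using $\frac{1}{1-\zeta^{m}}+\frac{1}{1-\zeta^{-m}}=1$ gives $U_0=\frac{n-1}{2}$. The recursion also holds between $r=0$ and $r=1$, because $\sum_m\zeta^{0\cdot m}(1)=n-1$; hence $U_1=U_0-(n-1)=-\frac{n-1}{2}=1-\frac{n+1}{2}$. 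Induction on $r$ then yields $U_r=r-\frac{n+1}{2}$ for $1\le r\le n-1$, which completes the argument.

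The only delicate point is the bookkeeping at the boundary step $r=0\to1$. There the sum $\sum_m\zeta^{rm}$ equals $n-1$ rather than $-1$, and this must be handled correctly. As a cross-check, one can instead use the antisymmetry $\cot((n-m)\theta)=-\cot(m\theta)$, which shows that $\sum_m\cot(m\theta)=0$ and that the left-hand side equals $-\sum_m\zeta^{-rm}\cot(m\theta)$. This is consistent with the sign pattern $i(n-2r)\mapsto i(n-2(n-r))=-i(n-2r)$, so the normalization is correct.
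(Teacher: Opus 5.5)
Your proof is correct, and it takes a different route from the paper's.

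The paper separates real and imaginary parts. The real part $\sum_m\cos(2rm\theta)\cot(m\theta)$ vanishes by pairing $m$ with $n-m$. The imaginary part is computed from the real trigonometric identity \eqref{eq:trig-cot}, $\sin(2rx)\cot x=1+\cos(2rx)+2\sum_{k=1}^{r-1}\cos(2kx)$, combined with $\sum_{m=1}^{n-1}\cos(2\pi km/n)=-1$.

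You instead stay in $\mathbb{Q}(\zeta)$ throughout. After writing $\cot(m\theta)=-i\,\frac{1+\zeta^m}{1-\zeta^m}$, everything reduces to the sawtooth-type identity $\sum_{m=1}^{n-1}\frac{\zeta^{rm}}{1-\zeta^m}=r-\frac{n+1}{2}$. You obtain this from the telescoping relation $U_r-U_{r+1}=\sum_m\zeta^{rm}$ and the base value $U_0=\frac{n-1}{2}$, and you handle the boundary step $r=0\to 1$ correctly. For $n$ even, read your pairing as $2U_0=\sum_{m=1}^{n-1}\bigl(\frac{1}{1-\zeta^m}+\frac{1}{1-\zeta^{-m}}\bigr)=n-1$, which also covers the self-paired term $m=n/2$.

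The two arguments share the same mechanism, a finite geometric series plus character orthogonality. Your telescoping step is the cyclotomic counterpart of \eqref{eq:trig-cot}, and your pairing for $U_0$ plays the role of the paper's real-part cancellation.

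What your version buys is that it needs no auxiliary trigonometric identity and no real/imaginary bookkeeping, since both parts come out at once. It is also phrased directly in terms of the quantities $\frac{1+q^\beta}{1-q^\beta}$ at roots of unity that define the quantum correction, in the same spirit as the exact cyclotomic checks of Section~\ref{sec:exceptional}. The paper's version is entirely real and elementary, at the cost of treating the two parts separately.
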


\begin{proof}
Write
\[
\zeta^{rm}
=
\cos(2rm\theta)
+i\sin(2rm\theta).
\]

We first consider the real part. Pair the term indexed by \(m\)
with the term indexed by \(n-m\). Since
\[
\cos(2r(n-m)\theta)
=
\cos(2rm\theta)
\]
and
\[
\cot((n-m)\theta)
=
-\cot(m\theta),
\]
the two terms cancel. When \(n\) is even, the only term paired
with itself is \(m=n/2\), and this term is zero because
\[
\cot\left(\frac{\pi}{2}\right)=0.
\]
Therefore
\[
\sum_{m=1}^{n-1}
\cos(2rm\theta)\cot(m\theta)
=
0.
\]

We next compute the imaginary part. We first prove the elementary
identity
\begin{equation}\label{eq:trig-cot}
\sin(2rx)\cot x
=
1+\cos(2rx)
+
2\sum_{k=1}^{r-1}\cos(2kx),
\end{equation}
for \(\sin x\neq0\).

Indeed,
\begin{align*}
1+\cos(2rx)
+2\sum_{k=1}^{r-1}\cos(2kx)
&=
\sum_{k=0}^{r-1}
\left(
\cos(2kx)+\cos(2(k+1)x)
\right)\\
&=
2\cos x
\sum_{k=0}^{r-1}\cos((2k+1)x),
\end{align*}
where we used
\[
\cos A+\cos B
=
2\cos\left(\frac{A+B}{2}\right)
\cos\left(\frac{A-B}{2}\right).
\]
Furthermore,
\begin{align*}
2\sin x
\sum_{k=0}^{r-1}\cos((2k+1)x)
&=
\sum_{k=0}^{r-1}
\left(
\sin(2(k+1)x)-\sin(2kx)
\right)\\
&=
\sin(2rx).
\end{align*}
Thus
\[
\sum_{k=0}^{r-1}\cos((2k+1)x)
=
\frac{\sin(2rx)}{2\sin x},
\]
and substituting this into the previous expression proves
\eqref{eq:trig-cot}.

Applying \eqref{eq:trig-cot} with
\[
x=m\theta,
\]
which is allowed because
\[
\sin(m\theta)\neq0,
\qquad
1\leq m\leq n-1,
\]
we obtain
\begin{align*}
&\sum_{m=1}^{n-1}
\sin(2rm\theta)\cot(m\theta)
\\
&\qquad=
(n-1)
+
\sum_{m=1}^{n-1}\cos(2rm\theta)
+
2\sum_{k=1}^{r-1}
\sum_{m=1}^{n-1}\cos(2km\theta).
\end{align*}

For every \(1\leq k\leq n-1\), $
\sum\limits_{m=0}^{n-1}
e^{2\pi i km/n}=0.
$
Taking real parts and removing the \(m=0\) term yields
\[
\sum_{m=1}^{n-1}
\cos\left(\frac{2\pi km}{n}\right)
=
-1.
\]
Hence
\begin{align*}
\sum_{m=1}^{n-1}
\sin(2rm\theta)\cot(m\theta)
&=
(n-1)-1-2(r-1)\\
&=
n-2r.
\end{align*}

The real part of the Fourier sum is zero and its imaginary part
is \(n-2r\). Therefore
\[
\sum_{m=1}^{n-1}
\zeta^{rm}\cot(m\theta)
=
i(n-2r).
\]
\end{proof}

We now record the precise consequence of
Lemma~\ref{lem:cot-fourier} that  occurs in the quantum
product calculation.

\begin{cor}\label{cor:T-abc}
Let
\[
1\leq a,b\leq n-1,
\qquad
a+b\not\equiv0\pmod n,
\]
and let \(c\in\{1,\ldots,n-1\}\) be uniquely determined by
\[
c\equiv a+b\pmod n.
\]
Define
\[
T_{a,b,c}
:=
\sum_{m=1}^{n-1}
(1-\zeta^{-am})(1-\zeta^{-bm})(1-\zeta^{cm})
\cot(m\theta).
\]
Then
\[
T_{a,b,c}
=
\begin{cases}
2ni, & a+b<n,\\[2mm]
-2ni, & a+b>n.
\end{cases}
\]
\end{cor}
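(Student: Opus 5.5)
The plan is to expand the triple product into pure characters $\zeta^{rm}$ and then apply Lemma~\ref{lem:cot-fourier} to each resulting term. Write $S(r):=\sum_{m=1}^{n-1}\zeta^{rm}\cot(m\theta)$. Lemma~\ref{lem:cot-fourier} gives $S(r)=i(n-2r)$ for $1\le r\le n-1$. Since $S$ depends only on $r$ modulo $n$, we also get $S(-r)=S(n-r)=i(n-2(n-r))=-i(n-2r)$. Hence, for $1\le r\le n-1$,
\[
S(r)-S(-r)=2i(n-2r).
\]
This antisymmetric combination is the only input the computation will need.

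First I expand the product. Multiplying out gives
\[
(1-\zeta^{-am})(1-\zeta^{-bm})(1-\zeta^{cm})
=1-\zeta^{-am}-\zeta^{-bm}+\zeta^{-(a+b)m}-\zeta^{cm}+\zeta^{(c-a)m}+\zeta^{(c-b)m}-\zeta^{(c-a-b)m}.
\]
Next I use $c\equiv a+b \pmod n$, which yields four simplifications:
\[
\zeta^{-(a+b)m}=\zeta^{-cm},\qquad \zeta^{(c-a)m}=\zeta^{bm},\qquad \zeta^{(c-b)m}=\zeta^{am},\qquad \zeta^{(c-a-b)m}=1.
\]
The two constant terms therefore cancel, and this cancellation is the step where a sign slip is easiest, so I will check it carefully. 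The summand reduces to
\[
(\zeta^{am}-\zeta^{-am})+(\zeta^{bm}-\zeta^{-bm})-(\zeta^{cm}-\zeta^{-cm}).
\]
Multiplying by $\cot(m\theta)$ and summing over $m$, I obtain
\[
T_{a,b,c}=[S(a)-S(-a)]+[S(b)-S(-b)]-[S(c)-S(-c)].
\]
All of $a,b,c$ lie in $\{1,\ldots,n-1\}$, so the formula above applies to each bracket and gives
\[
T_{a,b,c}=2i\bigl[(n-2a)+(n-2b)-(n-2c)\bigr]=2i\bigl(n-2(a+b-c)\bigr).
\]

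Finally I split into the two cases of the statement. If $a+b<n$, then $c=a+b$ and $T_{a,b,c}=2ni$. If $a+b>n$, then $c=a+b-n$, so $a+b-c=n$ and $T_{a,b,c}=2i(n-2n)=-2ni$. The case $a+b=n$ is excluded by hypothesis, which guarantees that $c\ne 0$ and hence that Lemma~\ref{lem:cot-fourier} applies to $c$. The only real obstacle is bookkeeping: the cancellation of the constants and the correct sign of $S(-r)$ via reduction modulo $n$. Everything else follows directly from Lemma~\ref{lem:cot-fourier}.
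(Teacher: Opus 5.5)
Your proof is correct and follows essentially the same route as the paper's. Both expand the product and use $c\equiv a+b \pmod n$ to cancel the constant terms. Both then apply Lemma~\ref{lem:cot-fourier} together with the antisymmetry $S(-r)=S(n-r)=-S(r)$; the only difference is cosmetic, since you collect the result as $2i\bigl(n-2(a+b-c)\bigr)$ before splitting into the two cases.
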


\begin{proof}
Expanding the product gives
\begin{align*}
&(1-\zeta^{-am})(1-\zeta^{-bm})(1-\zeta^{cm})
\\
&\quad=
1-\zeta^{-am}-\zeta^{-bm}
+\zeta^{-(a+b)m}
-\zeta^{cm}
+\zeta^{(c-a)m}
+\zeta^{(c-b)m}
-\zeta^{(c-a-b)m}.
\end{align*}
Since
\[
c\equiv a+b\pmod n,
\]
we have
\[
\zeta^{-(a+b)m}=\zeta^{-cm},
\]
\[
\zeta^{(c-a)m}=\zeta^{bm},
\qquad
\zeta^{(c-b)m}=\zeta^{am},
\]
and
\[
\zeta^{(c-a-b)m}=1.
\]
Therefore the first and last terms cancel, and
\[
(1-\zeta^{-am})(1-\zeta^{-bm})(1-\zeta^{cm})
=
\zeta^{-cm}
-\zeta^{-am}
-\zeta^{-bm}
-\zeta^{cm}
+\zeta^{am}
+\zeta^{bm}.
\]

For \(1\leq r\leq n-1\), set
\[
U_r
:=
\sum_{m=1}^{n-1}
\zeta^{rm}\cot(m\theta).
\]
By Lemma~\ref{lem:cot-fourier},
\[
U_r=i(n-2r).
\]
Moreover,
\[
U_{n-r}
=
i(n-2(n-r))
=
-i(n-2r)
=
-U_r.
\]

Consequently,
\begin{align*}
T_{a,b,c}
&=
U_{n-c}
-U_{n-a}
-U_{n-b}
-U_c
+U_a
+U_b\\
&=
-2U_c+2U_a+2U_b.
\end{align*}

If \(a+b<n\), then
\[
c=a+b,
\]
and hence
\begin{align*}
T_{a,b,c}
&=
2i
\left(
-(n-2c)+(n-2a)+(n-2b)
\right)\\
&=
2ni.
\end{align*}

If \(a+b>n\), then
\[
c=a+b-n.
\]
Therefore
\begin{align*}
T_{a,b,c}
&=
2i
\left(
-(n-2c)+(n-2a)+(n-2b)
\right)\\
&=
-2ni.
\end{align*}
This proves the claim.
\end{proof}

\section{Proof of Lemma \ref{lem:D-Fourier-identities}}\label{Appendix B}

For $x=m\theta'$, by definition, 
\begin{align*}
V_m = \sum_{a=1}^{2n-1} \sin(ax)z^a = \sum_{a=1}^{2n-1} \frac{e^{iax} - e^{-iax}}{2i} z^a \, .
\end{align*}
Summing these finite geometric series and using the identity 
$(e^{\pm ix}z)^{2n} =e^{\pm im \pi}z^{2n} =-1 \in \cR$ (recall that $m$ is odd), we obtain
\begin{equation}\label{eq:D-Vm-rational}
V_m
=
\frac{2\sin(x)z}
     {1-2\cos(x)z+z^2}
\qquad
\text{in }\mathcal R.
\end{equation}
The denominator is invertible in \(\mathcal R\), since its roots
\(e^{\pm ix}\) satisfy $(e^{\pm ix})^{2n} =-1$ and hence they are not roots of $z^{2n} -1$.
Evaluation at \(z=\pm1\) gives (i)
\[
V_m(1)= \frac{\sin x}{1-\cos x} = \cot \frac{x}{2} \qquad \mbox{and} \qquad V_m(-1) = -\frac{\sin x}{1+\cos x} = -\tan \frac{x}{2} \, .
\]

For \(m\ne\ell\), a partial-fraction decomposition of the
product of the two expressions
\eqref{eq:D-Vm-rational} gives
\[
V_mV_\ell=AV_m+BV_\ell,
\]
where
\[
A
=
\frac{\sin(\ell\theta')}
     {\cos(m\theta')-\cos(\ell\theta')},
\qquad
B
=
-\frac{\sin(m\theta')}
      {\cos(m\theta')-\cos(\ell\theta')}.
\]
The standard sum-to-product identities give exactly
\eqref{eq:D-VmVl}.

We prove (iii). Let 
\[
f_a := \sin (ax) \, , \qquad x=m\theta' = \frac{m\pi}{2n} \, .
\]
Since $V_m = \sum_{a=0}^{2n-1}f_az^a$ ($f_0=0$), in $\cR =\C[z]/(z^{2n}-1)$ the coefficient of $z^r$ in $V_m^2$ is
\[
c_r = \sum_{a=0}^{2n-1}f_af_{\overline{r-a}} \, , \qquad 0\le \overline{r-a} \le 2n-1 \, , \, \overline{r-a} \equiv r-a \,  ({\rm mod} \, 2n) \, .
\]

Now, if $0\le a \le r$, $\overline{r-a} = r-a$ and $f_{\overline{r-a}}= \sin ((r-a)x)$. If $r<a\le 2n-1$, $\overline{r-a} = 2n +r-a$.
Since \(2nx=m\pi\) and \(m\) is odd, $f_{\overline{r-a}} =\sin((r-a+2n)x) =-\sin((r-a)x)$.
It follows from this that
\[
c_r = \sum_{a=0}^{r} \sin(ax)\sin ((r-a)x) - \sum_{a=r+1}^{2n-1} \sin(ax)\sin ((r-a)x) \, .
\]
In other words, for 
\[
A_r := \sum_{a=0}^{r} \sin(ax)\sin ((r-a)x) \, , \qquad \mbox{and} \qquad B_r:= \sum_{a=r+1}^{2n-1} \sin(ax)\sin ((r-a)x) \, ,
\]
$c_r= A_r - B_r$. On the other hand, 
\[
A_r + B_r = \sum_{a=0}^{2n-1} \sin(ax)\sin ((r-a)x) = -n\cos (rx) \, ,
\]
where we have used the identities $2\sin (ax)\sin ((r-a)x) = \cos ((2a-r)x) - \cos (rx)$
and $\sum_{a=0}^{2n-1}\cos ((2a-r)x) =0$ (note that $e^{2ix}$ is a $2n$-th root of  unity distinct from $1$).
Therefore
\[
c_r = A_r - B_r = 2A_r- (A_r + B_r) = 2A_r + n\cos (rx) \, .
\]
Furthermore, using the product-to-sum formula, 
\[
2A_r = \sum_{a=0}^r [\cos ((2a-r)x) - \cos (rx)] = \sum_{a=0}^r \cos ((2a-r)x) - (r+1) \cos (rx) \, .
\]
Therefore
\[
c_r = (n-r-1)\cos (rx) +\sum_{a=0}^r \cos ((2a-r)x) \, .
\]

For the last sum, consider the geometric series
\begin{align*}
\sum_{a=0}^r e^{i(2a-r)x} & = e^{-irx}\frac{1-e^{i2(r+1)x}}{1-e^{i2x}} =e^{-irx} \frac{-2ie^{i(r+1)x}\sin((r+1)x)}{-2ie^{ix}\sin x} \\
&= \frac{\sin((r+1)x)}{\sin x} = \frac{\sin (rx)\cos x + \cos (rx) \sin x}{\sin x}  \\
&= \sin (rx) \cot x + \cos (rx) \, , 
\end{align*}
where we have used the identity $1-e^{2iy} =-2ie^{iy} \sin y$.

Consequently,
\begin{equation}\label{eq:D-cr}
c_r
=
(n-r)\cos(mr\theta')
+
\cot(m\theta')\sin(mr\theta'),
\qquad
0\le r\le n.
\end{equation}
In particular $c_0=n$. 

As observed before, the vectors $V_m$, for $m\in I_n$, form a basis of 
$\operatorname{Span}_{\mathbb C} \{E_1,\ldots,E_n\}$. 
Since \(V_m^2\in \mathcal R^\iota\), we may therefore write uniquely
\[
V_m^2
=
n+\sum_{j\in I_n}a_j V_j.
\]
By definition, $V_j = \sum_{r=1}^{2n-1}\sin (j r \theta')z^r = \sum_{r=1}^{n-1}\sin (j r \theta')(z^r +z^{-r}) + \sin (j n \theta') z^n$,
where  we have used the symmetry $\sin(j(2n-r)\theta')=\sin(j r\theta')$ observed before.
So the previous expression becomes
\[
V_m^2 =  n+\sum_{j\in I_n}a_j \sum_{r=1}^{n-1}\sin (j r \theta')(z^r + z^{-r} ) + \sum_{j\in I_n}a_j \sin (j n \theta') z^n  \, .
\]
Comparing the coefficients of \(z^r\), for \(1\le r\le n\), gives
\begin{equation*}
c_r = \sum_{j\in I_n}a_j \sin (j r \theta') \, .
\end{equation*}
Fix \(\ell\in I_n\). Multiplying these identities by \(\sin(\ell r\theta')\) for \(1\le r\le n-1\), and the identity for \(r=n\) 
by \(\frac12\sin(\ell n\theta')\), and summing, we obtain
\begin{align*}
&\sum_{r=1}^{n-1}c_r\sin (\ell r\theta') + c_n \frac{1}{2}\sin (\ell n\theta') \\
&= \sum_{r=1}^{n-1}\sum_{j \in I_n}a_j \sin (j r \theta')\sin (\ell r\theta')+ 
\sum_{j \in I_n}a_j \sin (j n \theta')\frac{1}{2}\sin(\ell n\theta') \\
& = \sum_{j \in I_n} a_j \left[ \sum_{r=1}^{n-1} \sin (j r \theta')\sin (\ell r\theta') + \frac{1}{2}\sin (j n \theta')\sin(\ell n\theta') \right] \\
& = \sum_{j \in I_n} a_j \frac{n}{2}\delta_{\ell j} = \frac{n}{2} a_\ell \, ,
\end{align*} 
where in the second-last equality we have used  \eqref{eq:D-sine-orthogonality}. 
Hence
\[
a_\ell
=
\frac2n
\left[
\sum_{r=1}^{n-1}
c_r\sin(\ell r\theta')
+
\frac12c_n\sin(\ell n\theta')
\right].
\]
Substituting \eqref{eq:D-cr} into the expression for $a_\ell$, we obtain
\[
\begin{aligned}
a_\ell
={}&
\frac{2}{n}
\sum_{r=1}^{n-1}
(n-r)\cos(mr\theta')\sin(\ell r\theta')
\\
&\quad
+\frac{2}{n}\cot(m\theta')
\left[
\sum_{r=1}^{n-1}
\sin(mr\theta')\sin(\ell r\theta')
+
\frac12
\sin(mn\theta')\sin(\ell n\theta')
\right].
\end{aligned}
\]

By \eqref{eq:D-sine-orthogonality}, the expression in square brackets is $\frac n2\delta_{m\ell}$.
Hence the second term contributes $\cot(m\theta')\delta_{m\ell}$. 

For the first term, we use
\[
\cos(mr\theta')\sin(\ell r\theta')
=
\frac12
\left[
\sin((\ell+m)r\theta')
+
\sin((\ell-m)r\theta')
\right].
\]
Therefore
\[
\begin{aligned}
&\frac{2}{n}
\sum_{r=1}^{n-1}
(n-r)\cos(mr\theta')\sin(\ell r\theta')
\\
&\qquad =
\frac1n
\sum_{r=1}^{n-1}
(n-r)\sin((\ell+m)r\theta')
+
\frac1n
\sum_{r=1}^{n-1}
(n-r)\sin((\ell-m)r\theta').
\end{aligned}
\]

Since $m,\ell\in I_n$, both $m$ and $\ell$ are odd, and hence
\[
s_+:=\frac{\ell+m}{2},
\qquad
s_-:=\frac{\ell-m}{2}
\]
are integers. Moreover,
\[
(\ell\pm m)r\theta'
=
\frac{\ell\pm m}{2}\frac{\pi r}{n}
=
\frac{s_\pm\pi r}{n}.
\]
We can therefore apply \eqref{eq:D-weighted-sine} with $N=n$.

Suppose first that $\ell\neq m$. Then $s_-\neq0$, and neither
$s_+$ nor $s_-$ is congruent to $0$ modulo $2n$. Hence \eqref{eq:D-weighted-sine} gives
\[
\sum_{r=1}^{n-1}
(n-r)\sin((\ell+m)r\theta')
=
\frac n2
\cot\left(\frac{(\ell+m)\theta'}2\right)
\]
and
\[
\sum_{r=1}^{n-1}
(n-r)\sin((\ell-m)r\theta')
=
\frac n2
\cot\left(\frac{(\ell-m)\theta'}2\right).
\]
Since in this case $\delta_{m\ell}=0$, we obtain
\[
a_\ell
=
\frac12
\left[
\cot\left(\frac{(\ell+m)\theta'}2\right)
+
\cot\left(\frac{(\ell-m)\theta'}2\right)
\right],
\qquad \ell\neq m.
\]

If instead $\ell=m$, then $s_-=0$, so the second sine sum vanishes
identically. For the first one, \eqref{eq:D-weighted-sine} gives
\[
\sum_{r=1}^{n-1}
(n-r)\sin(2mr\theta')
=
\frac n2\cot(m\theta').
\]
Therefore the first contribution to $a_m$ is $\frac12\cot(m\theta')$. 
The contribution coming from \eqref{eq:D-sine-orthogonality} is instead $\cot(m\theta')$,
and consequently
\[
a_m
=
\frac12\cot(m\theta')
+
\cot(m\theta')
=
\frac32\cot(m\theta').
\]
This proves (iii).

We now prove (iv). Recall that $V_m=\sum_{r=1}^n\sin(mr\theta')\,E_{r}$. 
Thus it is enough to compute, for every $1\le r\le n$, the sums
\[
\sum_{m\in I_n}
\cot\left(\frac{m\theta'}2\right)\sin(mr\theta') \qquad \mbox{and} \qquad
\sum_{m\in I_n}
\tan\left(\frac{m\theta'}2\right)\sin(mr\theta').
\]

For the first one, apply \eqref{eq:D-cotangent-sum} with $N=2n$. Since
$\theta'=\pi/(2n)$, we obtain
\[
\sum_{s=1}^{2n-1}
\cot\left(\frac{s\theta'}2\right)
\sin(sr\theta')
=
2n-r.
\]
We split this sum into its even and odd parts. The contribution of the
even indices $s=2k$, $1\le k\le n-1$, is
\[
\sum_{k=1}^{n-1}
\cot\left(\frac{k\pi}{2n}\right)
\sin\left(\frac{kr\pi}{n}\right).
\]
Applying \eqref{eq:D-cotangent-sum} again, now with $N=n$, this is equal to $n-r$.
Hence the contribution of the odd indices is
\[
(2n-r)-(n-r)=n.
\]
Since the odd integers between $1$ and $2n-1$ are precisely the
elements of $I_n$, it follows that
\[
\sum_{m\in I_n}
\cot\left(\frac{m\theta'}2\right)
\sin(mr\theta')
=
n,
\qquad 1\le r\le n.
\]
Therefore
\[
\sum_{m\in I_n}
\cot\left(\frac{m\theta'}2\right)V_m
=
n\sum_{r=1}^n E_r.
\]

For the second identity, we use
\[
\tan\left(\frac{m\theta'}2\right)
=
\cot\left(\frac{(2n-m)\theta'}2\right),
\]
because
\[
\frac{(2n-m)\theta'}2
=
\frac{\pi}{2}-\frac{m\theta'}2.
\]
Moreover, the map
\[
m\longmapsto 2n-m
\]
permutes the set $I_n$. Hence
\[
\sum_{m\in I_n}
\tan\left(\frac{m\theta'}2\right)
\sin(mr\theta')
 =
\sum_{m\in I_n}
\cot\left(\frac{(2n-m)\theta'}2\right)
\sin(mr\theta') \, ,
\]
after the change of variable $j=2n-m$,  becomes
\[
\sum_{j\in I_n}
\cot\left(\frac{j\theta'}2\right)
\sin((2n-j)r\theta').
\]
Now
\[
\sin((2n-j)r\theta')
=
\sin(r\pi-jr\theta')
=
-(-1)^r\sin(jr\theta').
\]
Therefore
\[
\sum_{m\in I_n}
\tan\left(\frac{m\theta'}2\right)
\sin(mr\theta')
=
-(-1)^r
\sum_{j\in I_n}
\cot\left(\frac{j\theta'}2\right)
\sin(jr\theta')
=
-n(-1)^r,
\]
where in the last equality we have used the first identity proved above.
Consequently,
\[
\sum_{m\in I_n}
\tan\left(\frac{m\theta'}2\right)V_m
=
-n\sum_{r=1}^n(-1)^rE_r.
\]
This proves (iv).

\bibliographystyle{amsalpha}
\bibliography{bib}

\end{document}